\newcommand{\ov}[1]{\overline{#1}}
\newcommand{\bs}[1]{\boldsymbol{#1}}
\renewcommand{\hat}[1]{\widehat{#1}}
\newcommand{\eps}{\varepsilon}
\newcommand{\vp}{\varphi}
\newcommand{\E}{\mathcal{E}}
\newcommand{\BV}{\mathrm{BV}}
\newcommand{\pd}{\partial}
\newcommand{\0}{\bm{0}}
\newcommand{\bph}{{\boldsymbol{\varphi}}}
\renewcommand{\S}{\mathcal{S}}
\newcommand{\bth}{\bm{\vartheta}}
\newcommand{\bphi}{\bm{\phi}}
\newcommand{\bxi}{\bm{\xi}}
\newcommand{\bz}{{\boldsymbol{\zeta}}}
\newcommand{\bu}{{\overline{\uu}}}
\newcommand{\hu}{{\widehat{\uu}}}
\newcommand{\bv}{{\overline{\vv}}}
\newcommand{\hv}{{\widehat{\vv}}}
\newcommand{\hw}{{\widehat{\ww}}}
\newcommand{\bU}{\overline{\bm{U}}}
\newcommand{\hU}{\widehat{\bm{U}}}
\newcommand{\bp}{\overline{\bm{p}}}
\newcommand{\hq}{\widehat{\bm{q}}}
\newcommand{\bP}{\overline{\bm{P}}}
\newcommand{\hQ}{\widehat{\bm{Q}}}
\newcommand{\HD}{H^1_D(\Omega, \RRR^d)}
\newcommand{\ovHD}{\ov H^1_D(\Omega, \RRR^d)}
\newcommand{\hatHD}{\hat H^1_D(\Omega, \RRR^d)}
\newcommand{\Vad}{\mathcal{V}_{\mathrm{ad}}}
\newcommand{\Tad}{\mathcal{T}_{\mathrm{ad}}}
\newcommand{\dy}{\, \mathrm{d}y}
\newcommand{\dt}{\, \mathrm{d}t}
\newcommand{\ds}{\, \mathrm{d}s}
\newcommand{\dx}{\, \mathrm{dx}}
\newcommand{\dH}{\, \mathrm{d}\mathcal{H}^{d-1}}
\newcommand{\Uad}{\mathcal{U}_{\rm ad}}
\newcommand{\Jred}{J_{\rm red}}
\newcommand{\inn}[1]{\langle #1 \rangle}
\newcommand{\no}[1]{\| #1 \|}
\newcommand{\V}{\mathcal{V}}
\def\norma #1{\mathopen \| #1\mathclose \|}
\def\<#1>{\mathopen\langle #1\mathclose\rangle}
\def\iO #1{\int_\Omega #1 \dx}
\renewcommand{\tilde}{\widetilde}
\newcommand{\x}{\bm{x}}
\newcommand{\s}{\bm{s}}
\newcommand{\y}{\bm{y}}
\newcommand{\Id}{\mathrm{Id}}
\newcommand{\bigchi}{\ensuremath{\mathrm{\mathcal{X}}}}
\theoremstyle{plain}
\newtheorem{thm}{Theorem}[section]
\newtheorem{lem}{Lemma}[section]
\newtheorem{prop}{Proposition}[section]
\newtheorem{cor}{Corollary}[section]
\newtheorem{remark}{Remark}[section]
\newtheorem{defn}{Definition}[section]
\numberwithin{equation}{section}
\def\multibold #1{\def\arg{#1}%
  \ifx\arg\pto \let\next\relax
  \else
  \def\next{\expandafter
    \def\csname #1#1\endcsname{{\bf #1}}%
    \multibold}%
  \fi \next}
\def\pto{.}
\def\multimathbb #1{\def\arg{#1}%
  \ifx\arg\pto \let\next\relax
  \else
  \def\next{\expandafter
    \def\csname #1#1#1\endcsname{{\mathbb #1}}%
    \multimathbb}%
  \fi \next}
\def\multical #1{\def\arg{#1}%
  \ifx\arg\pto \let\next\relax
  \else
  \def\next{\expandafter
    \def\csname cal#1\endcsname{{\cal #1}}%
    \multical}%
  \fi \next}
\def\multimathop #1 {\def\arg{#1}%
  \ifx\arg\pto \let\next\relax
  \else
  \def\next{\expandafter
    \def\csname #1\endcsname{\mathop{\rm #1}\nolimits}%
    \multimathop}%
  \fi \next}
\title{Phase field topology optimisation for 4D printing}
\author{Harald Garcke \footnotemark[1] \and Kei Fong Lam \footnotemark[2] \and Robert N\"urnberg \footnotemark[3] \and Andrea Signori \footnotemark[4]}
\date{ }
\begin{document}
\maketitle

\begin{abstract} 
\noindent
This work concerns a structural topology optimisation problem for 4D printing based on the phase field approach. The concept of 4D printing as a targeted evolution of 3D printed structures can be realised in a two-step process. One first fabricates a 3D object with multi-material active composites and apply external loads in the {\it programming stage}. Then, a change in an environmental stimulus and the removal of loads cause the object deform in the {\it programmed stage}. The dynamic transition between the original and deformed shapes is achieved with appropriate applications of the stimulus. 
The mathematical interest is to find an optimal distribution for the materials such that the 3D printed object achieves a targeted configuration in the programmed stage as best as possible.

Casting the problem as a PDE-constrained minimisation problem, we consider a vector-valued order parameter representing the volume fractions of the different materials in the composite as a control variable. We prove the existence of  optimal designs and formulate first order necessary conditions for minimisers. Moreover, by suitable asymptotic techniques, we relate our approach to a sharp interface description. Finally, the theoretical results are validated by several numerical simulations both in two and three space dimensions.
\end{abstract}

\noindent {\bf Keywords:}
4D printing, Printed active composites, Topology optimisation, Phase field, Linear elasticity, Optimal control
\vskip3mm
\noindent {\bf AMS (MOS) Subject Classification:} {
		49J20, % Calculus of variations and optimal control; optimization;  Existence theories in calculus of variations and optimal control 
		49K40, % Calculus of variations and optimal control; optimization; Optimality conditions: Sensitivity, stability, well-posedness 
		49J50  % Calculus of variations and optimal control; optimization;  Frechet and Gateaux differentiability in optimization

\renewcommand{\thefootnote}{\fnsymbol{footnote}}
\footnotetext[1]{Fakult{\"a}t f\"ur Mathematik, Universit{\"a}t Regensburg, 93040 Regensburg, Germany
({\texttt Harald.Garcke@mathematik.uni-regensburg.de}).}
\footnotetext[2]{Department of Mathematics, Hong Kong Baptist University, Kowloon Tong, Hong Kong ({\texttt akflam@math.hkbu.edu.hk}).}
\footnotetext[3]{Department of Mathematics, University of Trento, Trento, Italy
({\texttt robert.nurnberg@unitn.it}).}
\footnotetext[4]{Department of Mathematics, Politecnico di Milano, 20133 Milano, Italy ({\texttt andrea.signori@polimi.it}).}

\section{Introduction}
Four-dimensional (4D) printing \cite{Ali,Momeni,Tibbits} entails the combination of additive manufacturing (3D printing) and active material technologies to create printed composites capable of morphing into different configurations in response to various environmental stimuli. First designs of such composites consist of active material components, such as piezoelectric ceramics, hydrogels or shape memory polymers \cite{Ding}, in the form of fibers integrated within a passive elastomeric matrix \cite{Ge}. These multi-material active composites were originally difficult to manufacture, owing to the fragility of the materials involved \cite{Liu}. However, with 3D printing techniques it is nowadays feasible to fabricate these active composites to a high degree of precision, resulting in so-called {\it printed active composites} (PACs) \cite{Maute}. For an overview of other 4D printing strategies besides PACs in the construction of smart materials allowing direct stimuli-responsive transformations, we refer to \cite{Yuan}.

The shape shifting functionality of the active components enables the self-actuating and self-assembling potentials of PACs, allowing them to fold, bend, twist, expand and contract when a stimulus is applied, and return to their original configurations after the stimulus is removed. This property has led to the fabrication of intelligent active hinges and origami-like objects \cite{GeOrigami,Ge}, mesh structures \cite{Ding,Wang} and self-actuated deformable solids \cite{Sun} in the form of in the form of graspers and smart key-lock systems. We refer to the review article \cite{Momeni} and the references cited therein for more applications of 4D printing. The shape memory behaviour of the PACs can be programmed in a two-step cycle: The first (programming) step involves deforming the structure from its permanent shape to a metastable temporary shape, and the second (recovery) step involves applying an appropriate stimulus so that the structure regains its original shape. A typical stimulus is heat (in combination with light \cite{Kuksenok} or water \cite{Bakarich}), in which programmed PACs alter their shapes when the temperature rises above or drops below a critical value.

With the advances in the state-of-the art 3D printing technologies, the designs of PACs need not be limited to the conventional fibre-matrix architectures first considered in \cite{Ge}. In particular, the distribution of active and passive materials in the designs can take on more complicated geometries to better fulfil the intended functionalities of the PACs. This opens up the possibility of a computational design approach guided by a structural topology optimisation framework. In the context of 3D printing, see, e.g., the review \cite{Jiang}, this framework has been applied to explore optimising support structures to overhang regions \cite{Allaire,Langelaar,Mirzendehdel}, as well as self-support designs respecting the overhang angle constraints \cite{Cacace,GLNS,Leray,Liu:over}. For active materials and active composites, \cite{Howard,Pajot} studied how to pattern thin-film layers within a multi-layer structure with the aim of generating large shape changes via spatially varying eigenstrains within the microstructures, while \cite{Maute} aimed to optimise the microstructures of PACs matching various target shapes after a thermomechanical training and activation cycle. Later works incorporated nonlinear thermoelasticity \cite{Geiss,Sun}, thermo-mechanical cycles of shape memory polymers \cite{Bhattacharyya}, reversible deformations \cite{Lumpe}, as well as multi-material designs \cite{Wei} within the topology optimisation framework. 

In many of the aforementioned contributions, the topology optimisation is implemented numerically with the level-set method or the solid isotropic material with penalisation (SIMP) approach. In this work we employ an alternative approach based on the phase field methodology \cite{Bourdin}, which allows a straightforward extension to the multiphase setting \cite{BGFS,WangZ04} involving multiple (possibly distinct) types of active materials within the design.  In particular, this opens up the design to multiphase PACs that can memorise more than two shapes \cite{Ge:Mult,Li,Sun:Mult,Wan,Wu}. The phase field-based structural topology optimisation approach has been popularised in recent years by many authors, with applications in nonlinear elasticity \cite{Penzler}, stress constraints \cite{Burger}, compliance optimisation \cite{BlankGSSSV12,Take}, elastoplasticity \cite{Almi}, eigenfrequency maximisation \cite{GHK,Take}, graded-material design \cite{Carr}, shape optimisation in fluid flow \cite{GHechtNS,GHechtStokes,GHHKL} and more recently for 3D printing with overhang angle constraints \cite{GLNS}. 

Taking inspiration from the setting of Maute et al.~\cite{Maute}, we formulate a structural topology optimisation problem for a multiphase PAC with the objective of finding optimal distributions of active and passive materials so that the resulting composite matches targeted shapes as close as possible. An additional perimeter regularisation term, in the form of a multiphase Ginzburg--Landau functional, is added, and our contribution involves a mathematical analysis of the resulting multiphase structural topology optimisation problem with emphasis on the rigorous derivation of minimisers and optimality conditions. A sharp interface asymptotic analysis is performed to obtain a set of optimality conditions applicable in a level set-based shape optimisation framework. We perform numerical simulations in two and three spatial dimensions to show the optimal distributions of active and passive components in order to match with various target shapes for the PACs.

The rest of this paper is organised as follows: in Section \ref{SEC:PROB} we formulate the phase field structural optimisation problem to be studied, and present several preliminary mathematical results.  In Sections \ref{SEC:ANA} and \ref{SEC:OPT} we analyse the design optimisation problem and establish analytical results concerning minimisers and optimality conditions. The sharp interface limit is explored in Section \ref{SEC:Sharp} and, finally, in Section \ref{SEC:NUM} we present the numerical discretisation and several simulations of our approach.

\section{Problem formulation}\label{SEC:PROB}
Within a bounded domain $\Omega \subset \RRR^d$, $d \in \{2,3\}$, with Lipschitz boundary $\Gamma := \pd \Omega$, we assume there are $L$ types of linearly elastic materials, whose volume fractions are encoded with the help of a vectorial phase field variable $\bph = (\vp_1, \dots, \vp_L) : \Omega \to \Delta^L$, where $\Delta^L$ denotes the Gibbs simplex in $\RRR^L$:
\[
\Delta^L := \Big \{ \x=(x_1,...,x_L) \in \RRR^L \, : \, \sum_{i=1}^L x_i = 1, \, x_i \geq 0 \text{ for all } i \in \{1, \dots, L\} \Big \}.
\]
For our application to PACs, we take $\vp_L$ as the volume fraction of the passive elastic material, and $\vp_1, \dots, \vp_{L-1}$ as the volume fractions of (possibly different) active elastic materials. Note that in the two-phase case $L = 2$, we simply have $\bph = (\vp_1, \vp_2)$, and due to the relation $\vp_1 + \vp_2 = 1$ we may instead use the scalar difference function $\vp := \vp_1 - \vp_2$ to encode $\bph$ via the relation $\bph = (\frac{1}{2}(1+\vp), \frac{1}{2}(1-\vp))$.
This particular scenario will be employed later on, when dealing with the connection between the problem we are going to analyse and the corresponding sharp interface limit in Section \ref{SEC:Sharp}, as well as for the numerical simulations presented in Section \ref{SEC:NUM}.

The shape shifting mechanism considered in \cite{Maute,Zhang} involves two levels of temperature and one set of external loads, with one temperature $T_H$ higher than a critical transition temperature $T_g$ of the active materials (e.g., the glass transition temperature for shape memory polymers), and the other temperature $T_L$ lower than the critical temperature. The printed composite is first heated to $T_H$, and the shape memory cycle starts at $T_H$ and proceeds as follows: First, external loads are applied to deform the printed composite while the temperature remains at $T_H$, with the new configuration being known as the {\it programming stage} (or Stage 1). Then, the temperature is decreased while the loads are maintained on the printed composite, which are then removed once the temperature reached $T_L$. The resulting shape at $T_L$ is the desired shape and we denote it as the {\it programmed stage} (or Stage 2). Increasing the temperature to $T_H$ enables the printed composite to recover its original shape, and this ends the shape memory cycle, see Figure~\ref{fig:Maute} for the thermo-mechanical processing steps involving the two stages.

\begin{figure}[h]
\centering
\includegraphics[scale=0.38]{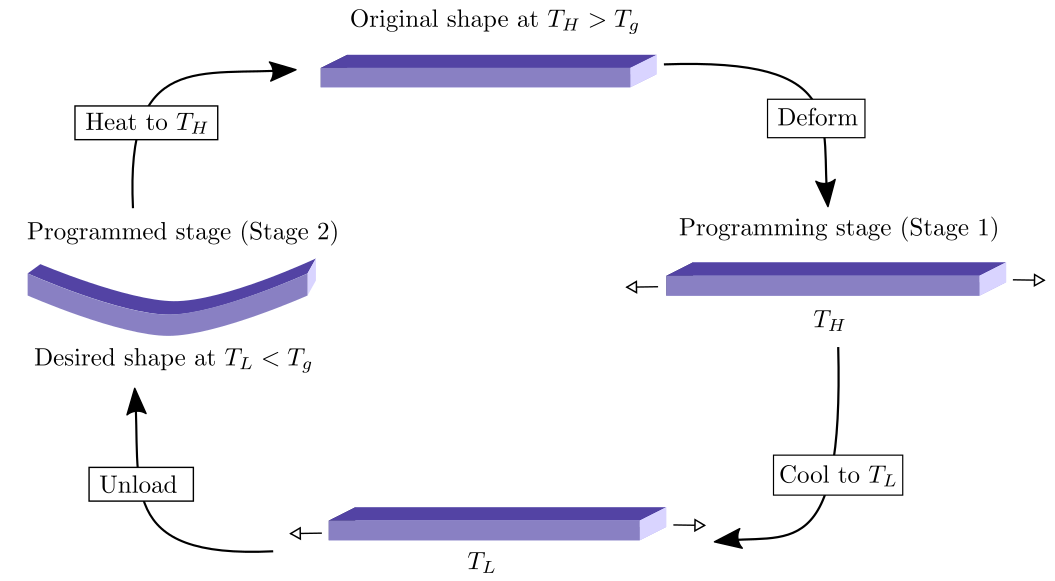}
\caption{Schematics of the the shape memory cycle from \cite{Maute} involving a programming stage (Stage 1) and a programmed stage (Stage 2).}
\label{fig:Maute}
\end{figure}

To capture the above behaviour, following \cite{Maute} we consider a model for each stage. In the programming stage (Stage 1), we consider an elastic displacement $\bu : \Omega \to \RRR^{d}$ and decompose the domain boundary $\Gamma$ into a partition $\Gamma = {\rm cl}(\ov{\Gamma}_D) \cup {\rm cl}(\ov{\Gamma}_N)$ with relative open subsets $\ov{\Gamma}_D$ and $\ov{\Gamma}_N$ such that $\ov{\Gamma}_D \cap \ov{\Gamma}_N = \emptyset$ and $\ov{\Gamma}_D \neq \emptyset$, where ${\rm cl}(A)$ denotes the closure of a set $A$, and we assign a prescribed displacement $\bU$ on $\ov{\Gamma}_D$ and surface loads $\ov{\gg}$ on $\ov{\Gamma}_N$.  Under a linearised elasticity setting, the balance of momentum yields the following system of equations for the displacement $\bu$:
\begin{subequations}\label{bu:sys}
\begin{alignat}{2}
\label{SYS:1} -\div \big (\ov{\CCC}(\bph) \E(\bu) \big ) & = \ov{\FF} && \quad \text{ in } \Omega, \\
\bu & = \bU && \quad  \text{ on } \ov{\Gamma}_D, \\
\big (\ov{\CCC}(\bph) \E(\bu) \big )\nn &= \ov{\gg} && \quad  \text{ on } \ov{\Gamma}_N,
\end{alignat}
\end{subequations}
with a phase-dependent elasticity tensor $\ov{\CCC}$, body force $\ov{\FF}$, outer unit normal $\nn$, and symmetrised gradient $\E(\bu)$.   One example of $\ov{\CCC}(\bph)$ is
\[
\ov{\CCC}(\bph(\x)) = \sum_{i=1}^L \ov{\CCC}_i \vp_i (\x) \quad \text{ for } \bph(\x) = (\vp_1(\x), \dots, \vp_L(\x)) \in \Delta^L, \quad \x \in \Omega,
\]
with constant tensors $\ov{\CCC}_i$, $1 \leq i \leq L$. 

After the change in temperature from $T_H$ to $T_L$ and after the programming loads in Stage 1 have been removed, the PAC experiences deformations due to residual stresses generated during the thermomechanical processing steps.  When the temperature falls below $T_g$, the active elastic materials undergo a phase transition from a soft rubbery state to a glassy state that has a higher Young's modulus. We introduce a new variable $\hu : \Omega \to \RRR^{d}$ to denote the displacement in the programmed stage (Stage 2), and as in \cite{Maute}, model the strains from the programming stage (Stage 1) as eigenstrains for $\hu$.  These eigenstrains are present only in the regions of active elastic materials, which we model with a fixity function $\chi: \RRR^L \to [0,\infty)$. The shape fixity for a shape memory material is the ratio (expressed as a percentage) between the strain in the stress-free state after the programming step and the maximum strain \cite{Abdullah}. For example, if the deformation elongates the material, the fixity quantifies the ability of the material to hold the temporary elongated length when the stress is removed.  It is clear from the definition that for a passive elastic material, the fixity is zero, and so we set that $\chi = 0$ in the region $\{\vp_L = 1\}$ of the passive elastic material. Decomposing the domain boundary $\Gamma$ into a possibly different partition $\Gamma = {\rm cl}(\hat{\Gamma}_D) \cup {\rm cl}(\hat{\Gamma}_N)$ with relative open subsets $\hat{\Gamma}_D$ and $\hat{\Gamma}_N$ such that $\hat{\Gamma}_D \cap \hat{\Gamma}_N = \emptyset$ and $\hat{\Gamma}_D \neq \emptyset$, where we assign a prescribed displacement $\hU$ on $\hat{\Gamma}_D$ and surface loads $\hat{\gg}$ on $\hat{\Gamma}_N$, the equations for the programmed stage (Stage 2) read as
\begin{subequations}\label{hu:sys}
\begin{alignat}{2}
\label{SYS:2} -\div \big (\hat{\CCC}(\bph) (\E(\hu) - \chi(\bph) \E(\bu)) \big )&  = \hat{\FF} && \text{ in } \Omega, \\
\hu & = \hU && \text{ on } \hat{\Gamma}_D, \\
\big ( \hat{\CCC}(\bph) (\E(\hu) - \chi(\bph) \E(\bu))\big ) \nn & = \hat{\gg} && \text{ on } \hat{\Gamma}_N,
\end{alignat}
\end{subequations}
with a phase-dependent elasticity tensor $\hat{\CCC}$ and body force $\hat{\FF}$.  In the above, the change in the elasticity tensor from $\ov{\CCC}$ in Stage 1 to $\hat{\CCC}$ in Stage 2 encodes the change in the elastic properties of the PAC when the temperature changes from $T_H$ to $T_L$. Similarly to \cite{Maute}, here we have neglected the strains arising from thermal expansion in \eqref{bu:sys} and \eqref{hu:sys}.

In the next section, under a suitable functional framework, we demonstrate that \eqref{bu:sys} and \eqref{hu:sys} are uniquely solvable, with the solution depending continuously on $\bph$.  Since $\bph$ controls the distribution of the passive and active elastic materials, it is natural to ask for specific material distributions that optimise certain cost functionals related to the design of PACs. Motivated from \cite{Maute}, we primarily focus on the following cost functional
\begin{align}\label{cost}
J(\bph, \hu) := \frac{1}{2} \int_{\Gamma^{\rm tar}}  \Big (W(\hu - \bm{u}^{\rm tar}) \Big ) \cdot (\hu - \bm{u}^{\rm tar}) \dH +  \gamma \iO{ \eps |\nabla \bph|^2 + \frac{1}{\eps} \Psi(\bph)},
\end{align}
where $\gamma > 0$ is a weighting factor, $\hu$ is a solution to \eqref{hu:sys} depending on $\bph$ (and also on $\bu$, a solution to \eqref{bu:sys}), $W \in \RRR^{d \times d}$ is a fixed weighting matrix, $\Gamma^{\rm tar}$ is a subset of the boundary $\hat{\Gamma}_N$, $\eps > 0$ is a fixed constant related to the thickness of the interfacial regions $\{0 < \vp_i <1\}$, $i\in\{1,...,L\}$, $\mathcal{H}^{d-1}$ indicates the standard $(d-1)$-dimensional Hausdorff measure, and $\Psi : \RRR^L \to \RRR$ is a non-negative multi-well potential that attains its minimum at the corners $\{\bm{e}_1, \dots, \bm{e}_L\}$ (the unit vectors in $\RRR^L$) of the Gibbs simplex $\Delta^L$. 
The first term in \eqref{cost} consists of a target shape matching term, where we like to match the displacement $\hu$ in Stage 2 with a prescribed deformation $\uu^{\rm tar}$ over the surface $\Gamma^{\rm tar} \subset \hat{\Gamma}_N$ by minimising the squared difference weighted by a matrix $W$.  The second term is the well-known Ginzburg--Landau functional in the multiphase setting that serves as a form of perimeter regularisation.  It provides some form of regularity to our design solutions and penalises designs that have large interfaces between the different phases of elastic materials.

For our problem we introduce the design space
\[
\Uad = \Big \{ \bph \in H^1(\Omega, \RRR^L) \, : \, \bph(\x) \in \Delta^L \text{ for a.e.~} \x \in \Omega \Big \},
\]
and our design problem can be formulated as the following
\begin{align*}
{\bf (P)} \quad & \min_{\bph \in \Uad} J(\bph, \hu) \text{ subject to } (\bph, \bu,\hu) \text{ is a solution to } \eqref{bu:sys}-\eqref{hu:sys}.
\end{align*}

\begin{remark}
For the existence theory for optimal designs to $(\bf P)$, it is also possible to consider a more general form of the cost functional:
\begin{align*}
J(\bph, \bu, \hu) & = \iO{h_\Omega(\x, \bu, \hu)} 
 + \int_{\ov{\Gamma}_N} \ov{h}({\bm s}, \bu) \dH + \int_{\hat{\Gamma}_N} \hat{h}(\s, \hu) \dH
\\
& \quad + \gamma \iO{\eps |\nabla \bph|^2 + \frac{1}{\eps} { \Psi(\bph)}},
\end{align*}
with Carath\'eodory functions $h_\Omega$, $\ov{h}$ and $\hat{h}$ satisfying (see \cite[Remark 5]{BGHR})
\begin{alignat*}{2}
 |h_\Omega(\x, \bm{v}, \bm{w})| & \leq a_1(\x) + b_1(\x) |\bm{v}|^p + b_2(\x) |\bm{w}|^p \quad &&\text{ for all } \bm{v}, \bm{w} \in \RRR^d, \text{ a.e.~} \x \in \Omega, \\
 |\ov{h}(\s, \bm{v})|  & \leq a_2(\s) + c_1(\s) |\bm{v}|^2  \quad  && \text{ for all } \bm{v} \in \RRR^d,\text{ a.e.~} \s \in \ov{\Gamma}_N, \\
 |\ov{h}(\s, \bm{w})| &  \leq a_3(\s) + c_2(\s) |\bm{w}|^2  \quad && \text{ for all } \bm{w} \in \RRR^d, \text{ a.e.~} \s \in \hat{\Gamma}_N,
\end{alignat*}
for any $2 \leq p < \infty$ if $d = 2$ and $2 \leq p < 6$ if $d = 3$, with functions $a_1 \in L^1(\Omega)$, $b_1, b_2 \in L^\infty(\Omega)$, $a_2 \in L^1(\ov{\Gamma}_N)$, $a_3 \in L^1(\hat{\Gamma}_N)$, $c_1 \in L^\infty(\ov{\Gamma}_N)$ and $c_2 \in L^\infty(\hat{\Gamma}_N)$. In our current setting we have $h_\Omega = \ov{h} = 0$ and $\hat{h}(\s, \hu) = \frac{1}{2} \bigchi_{\Gamma^{\rm tar}}(\s) W(\hu - \bm{u}^{\rm tar}) \cdot (\hu - \bm{u}^{\rm tar})$.  
\end{remark}

\begin{remark}
It is also possible to consider mass constraints for $\bph$ of the form
\begin{align*}
\frac 1 {|\Omega|} \iO{\bph} \leq \bm{\alpha} \quad \text{ or } \quad \frac 1 {|\Omega|} \iO{\bph} \geq  \bm{\beta} \quad \text{ or } \quad \bm{\beta} \leq \frac 1 {|\Omega|} \iO{\bph} \leq \bm{\alpha},
\end{align*}
for fixed vectors $\bm{\alpha}, \bm{\beta} \in \Delta^L$ (possibly also $\bm{\alpha}=\bm{\beta}$), where in the above the inequalities are taken component-wise. These are convex constraints and thus when included into the definition of $\Uad$, the design space remains a closed and convex set. Then, in the corresponding necessary optimality condition, associated Lagrange multipliers will appear, see \cite{BGFS,BGHR} for more details. 
\end{remark}

\paragraph{Notation.} For a Banach space $X$, we denote its topological dual by $X^*$, and the corresponding duality pairing by $\inn{\cdot,\cdot}_X$. For any $p \in [1,\infty]$ and $k >0$, the standard Lebesgue and Sobolev spaces over $\Omega$ are denoted by $L^p := L^p(\Omega)$ and $W^{k,p} := W^{k,p}(\Omega)$ with the corresponding norms 
$\no{\cdot}_{L^p(\Omega)}$ and $\no{\cdot}_{W^{k,p}(\Omega)}$.
In the special case $p = 2$, these become Hilbert spaces and we employ the notation $H^k := H^k(\Omega) = W^{k,2}(\Omega)$ with the corresponding norm $\no{\cdot}_{H^k(\Omega)}$.  For our subsequent analysis, we introduce the spaces
\begin{align*}
	\ovHD 
	: = \left \{
	\vv \in H^1(\Omega, \RRR^d) : \vv = \0 \quad \text{a.e. on $\ov \Gamma_D$} 
	\right\},
\end{align*}
and 
\begin{align*}
	\hatHD : = \left \{
	\vv \in H^1(\Omega, \RRR^d) : \vv = \0 \quad \text{a.e. on $\hat \Gamma_D$} 
	\right\}.
\end{align*}
For brevity, the corresponding norms are denoted by the same symbol $\norma{\cdot}_{H^1(\Omega)}$ if no confusion may arise.
Vectors, matrices, and vector- or matrix-valued functions will be denoted by bold symbols.  Furthermore, for a subset $\Gamma_N \subset \Gamma$, we consider the function space 
\[
H^{1/2}_{00}(\Gamma_N, \RRR^d ) := \left\{ \vv \in H^{1/2}(\Gamma_N, \RRR^d): \widetilde\vv \in  H^{1/2} (\Gamma, \RRR^d)\right\},
\]
where $\widetilde \vv$ denotes the trivial extension of $\vv$ to $\Gamma$, and we endow it with the norm
\begin{align*}
	\norma{\vv}_{H^{1/2}_{00}(\Gamma_N)} := \norma{\widetilde \vv}_{H^{1/2}(\Gamma)},
	\quad 
	\vv \in H^{1/2}_{00}(\Gamma_N, \RRR^d).
\end{align*}
We highlight that the above definition is not redundant as in general the trivial extension of a $H^{1/2}(\Gamma_N, \RRR^d)$ function does not belong to $H^{1/2}(\Gamma, \RRR^d)$. Besides, we remark that $H^{1/2}_{00}(\Gamma_N, \RRR^d)$ is a Hilbert space and 
\begin{align*}
	(H^{1/2}_{00}(\Gamma_N, \RRR^d), L^{2}(\Gamma_N, \RRR^d), H^{1/2}_{00}(\Gamma_N, \RRR^d)^*)
\end{align*}
forms a Hilbert triple (see, e.g., \cite{ML}).
\medskip

For the forthcoming analysis we make the following structural assumptions.
\begin{enumerate}[label={(\bf A\arabic{*})}, ref={\bf(A\arabic{*})}]
\item \label{ass:dom} 
The domain $\Omega \subset \RRR^d$, $d \in \{2,3\}$, is a bounded domain with $C^{1,1}$ or convex boundary $\Gamma = \pd \Omega$ that admits a partition 
$\Gamma = {\rm cl}(\ov\Gamma_D) \cup {\rm cl}(\ov\Gamma_N)$ with relative open subsets $\ov\Gamma_D$ and $\ov\Gamma_N$ such that $\ov \Gamma_D \cap \ov\Gamma_N = \emptyset$ and $\ov\Gamma_D \neq \emptyset$. Here, ${\rm cl}(A)$ denotes the closure of the set $A$. The same assumptions are made for $\hat \Gamma_D$ and $\hat \Gamma_N$.

\item \label{ass:elasticity}
	The elasticity tensor ${\ov \CCC}$ is assumed to be a tensor-valued function
	\begin{align*}
		{\ov \CCC} : \RRR^L \to \RRR^{d\times d \times d \times d },
	\end{align*}
	with ${\ov \CCC}_{ijkl} \in {C^{1,1}(\RRR^L,\RRR)}$, $i,j,k,l \in \{1, \dots, d\}$.
	Moreover, it fulfils the symmetry conditions 
	\begin{align*}
		{\ov \CCC}_{ijkl} = {\ov \CCC}_{klij} = {\ov \CCC}_{ijlk}= {\ov \CCC}_{jikl} \quad \text{for all $i,j,k,l \in \{1, \dots, d\}$,}
	\end{align*}
and there exist positive constants $C_0, C_1$, and $C_2$ such that, for all $\bph,\hh \in \RRR^L$, 
\begin{alignat}{2}
	\label{ass:coerc}
	&  C_0 |{\AA}|^2 
	 \leq 
	 {\ov \CCC}(\bph) {\AA} : {\AA} 
	 \leq  C_1 | {\AA}|^2 
	 \qquad 
	&& \forall {\AA} \in \RRR^{d \times d}_{\rm sym}\setminus \{\0\},
	 \\
	 & |{\ov \CCC}'(\bph)\hh \, \AA:\BB| \leq C_2 |\hh||\AA||\BB|
	 \qquad 
	&& \forall {\AA,\BB} \in \RRR^{d \times d}_{\rm sym}\setminus \{\0\},
\end{alignat}
where $\AA: \BB = \sum_{i,j=1}^n A_{ij}B_{ij}$, $|\AA| = \sqrt{\AA : \AA}$, and for every $\hh=(h_1,\dots,h_L) \in \RRR^L$,
\begin{align*}
	[{\ov \CCC}'(\bph) \hh]_{ijkl} 
	:= 
	\sum_{m=1}^L \partial_m {\ov \CCC}_{ijkl}(\bph) h_m 
	\quad 
	\text{ for all }
	i,j,k,l\in\{1,\dots,d\}.
\end{align*}
The set $\RRR^{d\times d}_{\rm sym}$ consists of the symmetric $({d\times d})$-matrices. The same assumptions are made for $\hat \CCC$.

\item \label{ass:potential}
	The multiwell potential $\Psi$ possesses the form
	\begin{align*}
		\Psi : \RRR^L \to \RRR \cup\{+\infty\}, 
		\quad
		\Psi = \tilde{\Psi} + I_\Delta,
	\end{align*}
	where $\tilde{\Psi} \in {C^{1,1}(\RRR^L)}$ and the indicator function $I_\Delta$ of the simplex $\Delta^L$ is defined as
	\begin{align*}
	I_\Delta (\bph) =
		\begin{cases}
		 0 \quad & \text{if $\bph \in \Delta^L$},
		\\
		+\infty \quad & \text{otherwise}.
		\end{cases}
	\end{align*}
	
\item \label{ass:chi}
	The function $\chi :\RRR^L \to \RRR$ is ${C^{1,1}(\RRR^L)}$ and there exist a positive constant $\chi_0$ such that
	\begin{align*}
	0 \leq \chi(\bph)  \leq \chi_0
	\qquad 
	\text{ for all } \bph \in \RRR^L.
	\end{align*}
\item \label{ass:data} The data of the problems satisfy
	\begin{align*}
	 \ov{\FF}, \hat{\FF} &\in L^2(\Omega, \RRR^d),
	\quad 
	\ov \UU \in H^{1/2}(\ov \Gamma_D, \RRR^d), 
	\quad 
	\hat \UU\in H^{1/2}(\hat \Gamma_D, \RRR^d),
	\\ 
	\ov{\gg} &\in H^{1/2}_{00}(\ov \Gamma_N,\RRR^d)^*, 
	\quad 
	\hat{\gg} \in H^{1/2}_{00}(\hat \Gamma_N,\RRR^d)^*.
\end{align*}
\item \label{ass:utar}
	The target displacement $\uu^{\rm tar} \in L^2(\Gamma^{\rm tar},\RRR^d)$, where $\Gamma^{\rm tar} \subset \hat \Gamma_N$.
\end{enumerate}

It is worth pointing out that condition \ref{ass:potential} entails that $\Psi(\bph)  = \tilde{\Psi}(\bph) $ for every $\bph \in \Uad$. 

\section{Analysis of the design optimisation problem}\label{SEC:ANA}
\subsection{Linear elasticity system with mixed boundary conditions}
In this section we provide a preliminary well-posedness result for the following linear elasticity system with mixed boundary conditions
\begin{subequations}\label{sys:abs}
\begin{alignat}{2}
	\label{eq:abs:1} -\div \big(\CCC \E (\uu) + \FFF \big) & = \ff \qquad && \text{in $\Omega$},
	\\
	\label{eq:abs:2} \uu & = \0\qquad && \text{on $\Gamma_D$},	
	\\
	 \label{eq:abs:3} (\CCC \E (\uu) + \FFF) \nn & =  \gg \qquad && \text{on $\Gamma_N$}.
\end{alignat}
\end{subequations}
The well-posedness of the system \eqref{sys:abs} is formulated as follows.
\begin{prop}\label{THM:EX:ABS}
In addition to \ref{ass:dom}, suppose that $\CCC \in L^\infty(\Omega, \RRR^{d \times d \times d \times d})$ fulfils the symmetry conditions 
\begin{alignat*}{2}
	\CCC_{ijkl} & = \CCC_{klij} = \CCC_{ijlk}= \CCC_{jikl} \quad & \text{for all $i,j,k,l \in \{1, \dots, d\}$,}
\end{alignat*}
and satisfies
\begin{align}
	\label{ass:coerc:abs}
	 \lambda |{\AA}|^2 
	 \leq 
	 (\CCC {\AA}) : {\AA} 
	 \leq  \Lambda | {\AA}|^2 
	 \quad 
	 \text{ for all } \AA \in \RRR^{d \times d}_{\rm sym}\setminus \{\0\} 
	 \,\, \text{and a.e. in $\Omega$},
\end{align}
for some positive constants $\lambda$ and $\Lambda$. Then, for every $\FFF \in L^2(\Omega, \RRR^{d \times d})$, 
$\ff \in L^2(\Omega, \RRR^d)$ and $\gg \in H^{1/2}_{00}(\Gamma_N,\RRR^d)^*$, there exists a unique weak solution $\uu \in H^1_D(\Omega, \RRR^d) :=  \{ \bm{v} \in H^1(\Omega, \RRR^d) \, : \, \bm{v} = \bm{0} \text{ a.e.~in } \Gamma_D \}$ to the elasticity system \eqref{sys:abs} in the sense that
\begin{align}
	\label{wf:abs}
	\int_\Omega \CCC \E (\uu) : \E(\vv) \dx 
	=
	- \int_\Omega (\FFF : \E( \vv )
	- \ff \cdot \vv) \dx
	+ \<\gg,\vv>_{H^{1/2}_{00}(\Gamma_N)}
	\quad \forall \vv \in \HD,
\end{align}
and there exists a positive constant $C$ independent of $\uu$ such that
\begin{align}\label{reg:abs}
	\norma{\uu}_{H^1(\Omega)} 
	\leq C 
	\left(
		\norma{\FFF}_{L^2(\Omega)} 
		+ \norma{\ff}_{L^2(\Omega)} 
		+ \norma{\gg}_{H^{1/2}_{00}(\Gamma_N)^*}
	\right).
\end{align}
\end{prop}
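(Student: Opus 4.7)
The plan is to establish the proposition via a direct application of the Lax--Milgram theorem on the Hilbert space $H^1_D(\Omega,\RRR^d)$ equipped with the standard $H^1$ inner product. First I would introduce the bilinear form
\begin{align*}
a(\uu,\vv) := \int_\Omega \CCC \E(\uu) : \E(\vv) \dx, \qquad \uu,\vv \in H^1_D(\Omega,\RRR^d),
\end{align*}
and the linear functional
\begin{align*}
\ell(\vv) := -\int_\Omega \bigl(\FFF : \E(\vv) - \ff \cdot \vv\bigr) \dx + \<\gg,\vv>_{H^{1/2}_{00}(\Gamma_N)},
\end{align*}
so that the weak formulation \eqref{wf:abs} reads $a(\uu,\vv)=\ell(\vv)$ for all $\vv \in H^1_D(\Omega,\RRR^d)$.

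The boundedness of $a$ is immediate from the upper bound $\CCC\AA:\AA \leq \Lambda|\AA|^2$ together with $|\E(\vv)| \leq |\nabla \vv|$. For continuity of $\ell$, the volume contributions are standard via Cauchy--Schwarz; the boundary term requires showing that if $\vv \in H^1_D(\Omega,\RRR^d)$ then its trace restricted to $\Gamma_N$ lies in $H^{1/2}_{00}(\Gamma_N,\RRR^d)$, with the norm controlled by $\norma{\vv}_{H^1(\Omega)}$. This holds because the full trace $\vv|_\Gamma$ belongs to $H^{1/2}(\Gamma,\RRR^d)$ and vanishes on $\Gamma_D$, so it coincides with the trivial extension to $\Gamma$ of its restriction to $\Gamma_N$; this is precisely the content of the $H^{1/2}_{00}$ characterisation under assumption \ref{ass:dom}. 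Consequently $\<\gg,\vv>_{H^{1/2}_{00}(\Gamma_N)}$ is estimated by $\norma{\gg}_{H^{1/2}_{00}(\Gamma_N)^*}\norma{\vv}_{H^1(\Omega)}$ up to a constant depending only on the trace operator.

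The crucial step is the coercivity of $a$. From the lower bound $C_0|\AA|^2 \leq \CCC\AA:\AA$ and the assumption $\ov\Gamma_D \neq \emptyset$, coercivity reduces to the Korn inequality of the second kind with mixed boundary conditions: there exists $c_K > 0$ such that
\begin{align*}
\norma{\vv}_{H^1(\Omega)}^2 \leq c_K \int_\Omega |\E(\vv)|^2 \dx \qquad \forall \vv \in H^1_D(\Omega,\RRR^d).
\end{align*}
This is the main analytical obstacle. Under the regularity of $\Omega$ imposed in \ref{ass:dom} (either $C^{1,1}$ or convex Lipschitz) and the condition that the Dirichlet portion $\ov\Gamma_D$ has positive $\mathcal{H}^{d-1}$-measure, Korn's inequality in this form is classical; I would invoke it as a known result (referencing, e.g., the usual sources on linear elasticity or the authors' cited references on mixed boundary value problems), noting that the rigid motions are excluded because a rigid motion that vanishes on $\ov\Gamma_D$ is necessarily zero.

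With continuity and coercivity of $a$ and continuity of $\ell$ established, Lax--Milgram yields a unique $\uu \in H^1_D(\Omega,\RRR^d)$ satisfying \eqref{wf:abs}. For the stability bound \eqref{reg:abs}, I would test the weak formulation with $\vv = \uu$, use $C_0$-coercivity on the left, Cauchy--Schwarz and the trace estimate on the right to get
\begin{align*}
\norma{\uu}_{H^1(\Omega)}^2 \leq C \bigl(\norma{\FFF}_{L^2(\Omega)} + \norma{\ff}_{L^2(\Omega)} + \norma{\gg}_{H^{1/2}_{00}(\Gamma_N)^*}\bigr)\norma{\uu}_{H^1(\Omega)},
\end{align*}
and then divide by $\norma{\uu}_{H^1(\Omega)}$. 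The constant $C$ depends only on $C_0$, the Korn constant $c_K$, the trace constant, and $\Omega$, hence is independent of $\uu$ as claimed.
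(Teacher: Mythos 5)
Your proposal is correct and follows essentially the same route as the paper: a direct application of the Lax--Milgram theorem to the same bilinear form on $H^1_D(\Omega,\RRR^d)$, with continuity from the upper bound in \eqref{ass:coerc:abs}, coercivity from the lower bound combined with Korn's inequality, and the stability bound \eqref{reg:abs} obtained from the same testing argument. The only slip is notational: the lower-bound constant in this proposition is $\lambda$, not $C_0$ (which is the constant in \ref{ass:elasticity} for $\ov\CCC$).
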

Note that whenever $\gg \in L^{2}(\Gamma_N, \RRR^d) \subset H^{1/2}_{00}(\Gamma_N, \RRR^d)^*$, we can identify the duality product in \eqref{wf:abs} as the standard boundary integral, that is,
\begin{align*}
	\<\gg,\vv>_{H^{1/2}_{00}(\Gamma_N)} = \int_{\Gamma_N} \gg \cdot \vv \dH.
\end{align*}

\begin{proof}[Proof of Proposition~\ref{THM:EX:ABS}]
The variational equality \eqref{wf:abs} admits a unique solution by a direct application of the Lax--Milgram theorem (cf., e.g., \cite{Alt}). In this direction, we set
\begin{align*}
	& {\bf V} = H^1_D(\Omega, \RRR^d) \\	
	& a(\cdot,\cdot): {\bf V} \times {\bf V} \to \RRR, \quad a(\uu,\vv): = \int_\Omega \CCC \E(\uu) : \E(\vv) \dx ,
	\\
	&  \langle \FF, \vv \rangle :=  - \int_\Omega (\FFF: \E(\vv ) - \ff \cdot \vv )\dx +  \<\gg,\vv>_{H^{1/2}_{00}(\Gamma_N)}.
\end{align*}
It is worth noticing that it readily follows from the assumptions on $\FFF$, $\ff$, and $\gg$ that $\FF \in \VV^*$.
With the above notation, \eqref{wf:abs} can then be rewritten as the variational problem
\begin{align*}
	a (\uu,\vv) = \<\FF, \vv> \quad \forall \vv \in {\bf V}.
\end{align*}
Thus, to apply the Lax--Milgram theorem, it is sufficient to show the bilinear form $a(\cdot, \cdot)$ is continuous and coercive in $\VV$. By \eqref{ass:coerc:abs} we have
\begin{align*}
	|a(\uu,\vv)| 
	\leq C \norma{\uu}_{H^1(\Omega)}\norma{\vv}_{H^1(\Omega)} 
	\leq C \norma{\uu}_{{\bf V}}\norma{\vv}_{{\bf V}}
	\quad \forall  \uu,\vv \in {{\bf V}},
\end{align*}
while \eqref{ass:coerc:abs} and Korn's inequality yield the ${\bf V}$-coercivity:
\begin{align*}
	|a(\vv,\vv)| 
	\geq 
	\lambda \norma{\E(\vv)}_{L^2(\Omega)}^2  
%	+ C \norma{\v}_{\VV}^2  
	\geq
	C(\lambda,{C_K}) \norma{\vv}_{{\bf V}}^2
	\quad \forall  \vv \in {{\bf V}},
\end{align*}
with a constant ${C_K}$ arising from Korn's inequality. Thus, the existence and uniqueness of $\uu \in H^1_D(\Omega, \RRR^d)$ solving \eqref{wf:abs}, as well as the estimate \eqref{reg:abs}, readily follow from the Lax--Milgram theorem.
\end{proof}

We end this section with another abstract result that will be useful for the subsequent analysis. Consider the following problem with inhomogeneous data on the Dirichlet boundary:
\begin{equation}\label{Diri}
\begin{alignedat}{2}
	-\div \big(\CCC \E (\uu) \big) & = \0\qquad && \text{ in } \Omega,
	\\
	\uu & = {\cal U}\qquad && \text{ on } \Gamma_D,	
	\\
	 \big (\CCC \E (\uu) \big) \nn & =  \0 \qquad && \text{ on } \Gamma_N.
\end{alignedat}
\end{equation}
Well-known theory yields that, for every ${\cal U} \in H^{1/2} (\Gamma_D, \RRR^d)$, there exists a unique weak solution $\uu \in H^1(\Omega,\RRR^d)$. The proof follows similarly to the above as a direct consequence of the Lax--Milgram theorem.
This allows us to introduce the associated solution operator, that we call the {\it extension operator}
\begin{align}
		{\cal H} : H^{1/2}(\Gamma_D, \RRR^d) \to H^1(\Omega,\RRR^d), 
		\quad 
		{\cal H}: {\cal U} \mapsto \uu,
		\quad 
		\label{harm:ext}
\end{align}
where $\uu$ is the unique weak solution to system \eqref{Diri}.

\subsection{Well-posedness of the state systems}
Similarly to \eqref{Diri} and \eqref{harm:ext}, we can introduce the extension operators $\ov {\cal H}$ and  $\hat {\cal H}$ related to 
$\ov \CCC$, $\ov \Gamma_D$ and $\hat \CCC$, $\hat \Gamma_D$, respectively. 
Then defining the functions $\ov H :=  \ov{\cal H}(\bU), \hat H := \hat {\cal H}(\hU) \in H^1(\Omega, \RRR^d)$ allows us to transform \eqref{bu:sys} and \eqref{hu:sys} into the equivalent problems
\begin{equation}\label{bu:new:sys}
\begin{alignedat}{2}
	 -\div \big(\ov \CCC (\bph) \E (\bu^{\rm new} + {\ov H} ) \big) & = \ov{\FF} \qquad && \text{ in } \Omega,
	\\
	 \bu^{\rm new} & = \0 \qquad && \text{ on } \ov \Gamma_D,	
	\\
	 \big( \ov \CCC (\bph) \E (\bu^{\rm new} + {\ov H})\big) \nn & = \ov{\gg} \qquad && \text{ on } \ov \Gamma_N, 
	 \end{alignedat}
\end{equation}
and
\begin{equation}\label{hu:new:sys}
\begin{alignedat}{2}
	 -\div \big(\hat \CCC (\bph) ( \E (\hu^{\rm new} + {\hat H}) - \chi(\bph) \E(\bu^{\rm new} + {\ov H})\big) & = \hat{\FF} \qquad && \text{ in } \Omega,
	\\
	 \hu^{\rm new} & = \0 \qquad && \text{ on } \hat \Gamma_D,
	\\
	 \big(\hat \CCC (\bph) ( \E (\hu^{\rm new} + {\hat H})- \chi(\bph) \E(\bu^{\rm new} + {\ov H}) \big) \nn & = \hat{\gg} \qquad && \text{ on } \hat \Gamma_N,
\end{alignedat}
\end{equation}
where we set
\[
\bu = \bu^{\rm new} + {\ov H}, \quad \hu = \hu^{\rm new} + {\hat H}.
\]
For a cleaner presentation, we abuse notation and use the same variables $\bu$ and $\hu$ to denote $\bu^{\rm new}$ and $\hu^{\rm new}$.  Then, the well-posedness of \eqref{bu:sys} and \eqref{hu:sys} (equivalently \eqref{bu:new:sys} and \eqref{hu:new:sys}) are formulated as follows.

\begin{thm}\label{THM:EX:STATE}
Under \ref{ass:dom}--\ref{ass:data}, for every $\bph \in L^\infty(\Omega, \RRR^L)$, there exists a unique solution pair $(\bu, \hu) \in \ovHD \times \hatHD$ satisfying
\begin{align}
%	\notag
	& 
	\iO {\ov \CCC(\bph) \E(\bu) : \E( \ov \bz)}
	= 	
	\iO {\ov{\FF} \cdot \ov \bz}
	- \iO {\ov \CCC(\bph) \E({\ov H}) : \E( \ov \bz)}
%	\\
	\label{wf:sys:1}
%	& \quad 
%	+ \int_{\ov \Gamma_N} \ov{\gg} \cdot \bz \dH,
	+ \< \ov{\gg} ,\ov \bz >_{H^{1/2}_{00}(\ov \Gamma_N)},
	\\
	& 
	\notag
	\iO {\hat \CCC(\bph) \E(\hu) : \E( \hat \bz)}
	- \iO {\hat \CCC(\bph) \chi(\bph) \E(\bu) :\E( \hat \bz)}
	= 	
	\iO {\hat{\FF} \cdot \hat \bz}
	\\
	\label{wf:sys:2}
	& \quad 
	+ \iO {\hat \CCC(\bph) \E({\hat H}) : \E( \hat \bz)}
	- \iO {\hat \CCC(\bph) \chi(\bph) \E({\ov H}) :\E(\hat  \bz)}
	+ \< \hat{\gg} ,\hat \bz >_{H^{1/2}_{00}(\hat \Gamma_N)}
\end{align}
for all $\ov \bz \in \ovHD$ and $\hat \bz \in \hatHD$.  Moreover, there exists a positive constant $C$, independent of $\bph$, such that
\begin{align}\label{reg:state}
	\norma{\bu}_{H^1(\Omega)} 
	+\norma{\hu}_{H^1(\Omega)} 
	\leq C.
\end{align}
\end{thm}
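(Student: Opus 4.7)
The plan is to exploit the triangular structure of the coupled weak formulation: equation \eqref{wf:sys:1} determines $\bu$ from $\bph$ alone, and equation \eqref{wf:sys:2} then determines $\hu$ from the pair $(\bph,\bu)$. Both problems fit the abstract template \eqref{sys:abs} of Proposition~\ref{THM:EX:ABS} after identifying the right data, so the whole theorem follows by two successive applications of that proposition together with the uniform-in-$\bph$ bounds furnished by \ref{ass:elasticity}--\ref{ass:data}.

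First I would solve \eqref{wf:sys:1}. For any $\bph \in L^\infty(\Omega,\RRR^L)$, the coefficient field $\x \mapsto \ov\CCC(\bph(\x))$ lies in $L^\infty(\Omega,\RRR^{d\times d\times d\times d})$, inherits the algebraic symmetries of $\ov\CCC$, and, thanks to \eqref{ass:coerc}, satisfies the two-sided pointwise bound \eqref{ass:coerc:abs} with $\lambda = C_0$ and $\Lambda = C_1$ uniformly in $\bph$. Taking $\FFF := \ov\CCC(\bph)\E(\ov H)$, $\ff := \ov\FF$, $\gg := \ov\gg$, which are admissible because $\ov H \in H^1(\Omega,\RRR^d)$ and by \ref{ass:data}, Proposition~\ref{THM:EX:ABS} directly produces a unique $\bu \in \ovHD$ satisfying \eqref{wf:sys:1} together with the corresponding instance of \eqref{reg:abs}, whose constant depends only on $C_0$, $C_1$ and the Korn constant of $\Omega$.

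With $\bu$ at hand I would repeat the procedure for $\hu$, now taking coefficient $\hat\CCC(\bph)$, body force $\hat\FF$, surface load $\hat\gg$ and tensor source
\begin{align*}
\FFF := \hat\CCC(\bph)\E(\hat H) - \hat\CCC(\bph)\chi(\bph)\E(\bu + \ov H).
\end{align*}
This $\FFF$ lies in $L^2(\Omega,\RRR^{d\times d})$ because $\hat\CCC(\bph)$ is bounded by $C_1$ via \ref{ass:elasticity}, $\chi(\bph)$ is bounded by $\chi_0$ via \ref{ass:chi}, and $\ov H, \hat H, \bu$ all belong to $H^1(\Omega,\RRR^d)$. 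A second application of Proposition~\ref{THM:EX:ABS} then yields a unique $\hu \in \hatHD$ solving \eqref{wf:sys:2}, and substituting the previous bound on $\bu$ into the ensuing instance of \eqref{reg:abs} produces \eqref{reg:state}.

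No step appears to pose a serious obstacle; the proof is essentially a bootstrapping on top of the Lax--Milgram argument already used in Proposition~\ref{THM:EX:ABS}. The only point worth highlighting is the uniformity of the constant in \eqref{reg:state} with respect to $\bph$, which is automatic because $C_0, C_1$ in \ref{ass:elasticity} and $\chi_0$ in \ref{ass:chi} are absolute, the Korn constant is purely geometric, and the liftings $\ov H = \ov{\cal H}(\bU)$ and $\hat H = \hat{\cal H}(\hU)$ produce fixed bounds in $H^1(\Omega,\RRR^d)$ depending only on the boundary data $\bU, \hU$ from \ref{ass:data}.
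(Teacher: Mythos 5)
Your proposal is correct and follows essentially the same route as the paper: two successive applications of Proposition~\ref{THM:EX:ABS}, first with $\CCC=\ov\CCC(\bph)$, $\FFF=\ov\CCC(\bph)\E(\ov H)$, $\ff=\ov\FF$, $\gg=\ov\gg$, and then with $\CCC=\hat\CCC(\bph)$, $\FFF=\hat\CCC(\bph)(\E(\hat H)-\chi(\bph)\E(\bu+\ov H))$, $\ff=\hat\FF$, $\gg=\hat\gg$. The uniform estimate \eqref{reg:state} is likewise obtained in the paper exactly as you argue, from \eqref{reg:abs} together with the uniform bounds on $\ov\CCC$, $\hat\CCC$ and $\chi$ in \ref{ass:elasticity} and \ref{ass:chi}.
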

\begin{proof}
For \eqref{wf:sys:1} we invoke Proposition \ref{THM:EX:ABS} with the specifications
\begin{align*}
	& \uu = \bu,
	\quad
	\CCC= \ov \CCC(\bph),	
	\quad 
	\FFF= \ov \CCC(\bph) \E({\ov H}), 
	\quad 
	\ff= \ov{\FF}, 
	\quad 
	\gg = \ov{\gg},
	\quad
	\Gamma_D = \ov \Gamma_D,
	\quad
	\Gamma_N = \ov \Gamma_N,
\end{align*}
and for \eqref{wf:sys:2} we consider
\begin{align*}
	& \uu = \hu,
	\quad
	\CCC= \hat \CCC(\bph),	
	\quad 
	\FFF=  \hat \CCC(\bph)( \E ({\hat H})- \chi(\bph) \E(\bu + {\ov H} ) ), 
	\quad 
	\ff= \hat{\FF}, 
	\quad 
	\gg = \hat{\gg},
	\\ & 	
	\Gamma_D = \hat \Gamma_D,
	\quad
	\Gamma_N = \hat \Gamma_N,
\end{align*}
to obtain the existence and uniqueness of solutions $\bu$ and $\hu$. Lastly the estimate \eqref{reg:state} can be obtained from \eqref{reg:abs} and the uniform boundedness of the tensors $\ov{\CCC}$ and $\hat{\CCC}$ in \ref{ass:elasticity}.
\end{proof}

\begin{thm}\label{THM:CD}
Under \ref{ass:dom}--\ref{ass:data}, for $i = 1,2$, let $\bph_i \in L^\infty(\Omega, \RRR^L) $ with $\norma{\bph_i}_{L^\infty(\Omega)} \leq R$, where $R> 0$ is fixed, and let $(\bu_i, \hu_i)$ denote the unique solutions 
to systems \eqref{bu:new:sys} and \eqref{hu:new:sys} corresponding to $\bph_i$, but with the same data ${\ov \FF}$, $\hat{\FF}$, $\ov{\gg}$, $\hat{\gg}$, ${\ov H}$ and ${\hat H}$.
Then, there exists a positive constant $C$, independent of the differences, such that
\begin{equation}
	\label{cd:est}
\begin{aligned}
	 \norma{\bu_1-\bu_2}_{H^1(\Omega)} &  \leq C \norma{\bph_1- \bph_2}_{L^\infty(\Omega)}, \\
	 \norma{\hu_1-\hu_2}_{H^1(\Omega)} & \leq C \norma{\E(\bu_1 - \bu_2)}_{L^2(\Omega)} + C \norma{\bph_1- \bph_2}_{L^\infty(\Omega)}.
\end{aligned}
\end{equation}
\end{thm}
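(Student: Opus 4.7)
The strategy is to derive linear elasticity problems (with zero Dirichlet data) satisfied by the differences $\bu_1 - \bu_2$ and $\hu_1 - \hu_2$, and then invoke Proposition~\ref{THM:EX:ABS} together with the Lipschitz regularity of $\ov\CCC$, $\hat\CCC$ and $\chi$ granted by \ref{ass:elasticity}--\ref{ass:chi}. Since the data $\ov\FF$, $\hat\FF$, $\ov\gg$, $\hat\gg$, $\ov H$, $\hat H$ coincide, they will disappear on subtraction.

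First I would subtract \eqref{wf:sys:1} written for $(\bph_1, \bu_1)$ and $(\bph_2, \bu_2)$ and rearrange to obtain
\begin{align*}
\iO{\ov\CCC(\bph_1)\,\E(\bu_1-\bu_2):\E(\ov\bz)}
 = - \iO{\bigl(\ov\CCC(\bph_1)-\ov\CCC(\bph_2)\bigr)\E(\bu_2+\ov H):\E(\ov\bz)}
\end{align*}
for every $\ov\bz\in\ovHD$. This places $\bu_1-\bu_2$ in the setting of \eqref{sys:abs} with $\CCC=\ov\CCC(\bph_1)$, $\ff=\0$, $\gg=\0$ and $\FFF=\bigl(\ov\CCC(\bph_1)-\ov\CCC(\bph_2)\bigr)\E(\bu_2+\ov H)$. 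Since $\norma{\bph_i}_{L^\infty(\Omega)}\leq R$, the $C^{1,1}$ bound in \ref{ass:elasticity} and the mean value theorem yield a constant $C_R>0$ with $|\ov\CCC(\bph_1(\x))-\ov\CCC(\bph_2(\x))|\leq C_R|\bph_1(\x)-\bph_2(\x)|$ a.e.\ in $\Omega$. Combining this with the uniform bound $\norma{\bu_2}_{H^1(\Omega)}+\norma{\ov H}_{H^1(\Omega)}\leq C$ from Theorem~\ref{THM:EX:STATE} and \ref{ass:data}, the estimate \eqref{reg:abs} gives
\begin{align*}
\norma{\bu_1-\bu_2}_{H^1(\Omega)}\leq C\norma{\FFF}_{L^2(\Omega)}\leq C\norma{\bph_1-\bph_2}_{L^\infty(\Omega)},
\end{align*}
which is the first inequality in \eqref{cd:est}.

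For the second estimate I would repeat the subtraction procedure on \eqref{wf:sys:2}. After rearrangement, $\hu_1-\hu_2$ satisfies a problem of the form \eqref{sys:abs} with $\CCC=\hat\CCC(\bph_1)$, $\ff=\0$, $\gg=\0$, and $\FFF$ collecting the three contributions
\begin{align*}
\FFF_1 &= -\bigl(\hat\CCC(\bph_1)-\hat\CCC(\bph_2)\bigr)\E(\hu_2+\hat H),\\
\FFF_2 &= \hat\CCC(\bph_1)\chi(\bph_1)\E(\bu_1-\bu_2),\\
\FFF_3 &= \bigl(\hat\CCC(\bph_1)\chi(\bph_1)-\hat\CCC(\bph_2)\chi(\bph_2)\bigr)\E(\bu_2+\ov H).
\end{align*}
The $C^{1,1}$ assumptions \ref{ass:elasticity}, \ref{ass:chi}, together with the $L^\infty$-bound on $\bph_i$ and the triangle inequality applied to $\hat\CCC(\bph_1)\chi(\bph_1)-\hat\CCC(\bph_2)\chi(\bph_2)$, give pointwise Lipschitz estimates; combined with the uniform $H^1$-bounds on $\bu_2$, $\hu_2$, $\ov H$, $\hat H$, we obtain $\norma{\FFF_1}_{L^2(\Omega)}+\norma{\FFF_3}_{L^2(\Omega)}\leq C\norma{\bph_1-\bph_2}_{L^\infty(\Omega)}$, while $\norma{\FFF_2}_{L^2(\Omega)}\leq C\norma{\E(\bu_1-\bu_2)}_{L^2(\Omega)}$ thanks to \ref{ass:elasticity} and the uniform bound on $\chi$. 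A second application of \eqref{reg:abs} then delivers the second estimate in \eqref{cd:est}.

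There is no conceptually delicate point here; the only mild issue is the careful bookkeeping of the various source terms for $\hu_1-\hu_2$, especially the product term $\hat\CCC(\bph)\chi(\bph)$, where one must add and subtract $\hat\CCC(\bph_1)\chi(\bph_2)$ to separate the contributions and apply the Lipschitz bounds of \ref{ass:elasticity} and \ref{ass:chi} independently. Note that the second estimate is left in terms of $\norma{\E(\bu_1-\bu_2)}_{L^2(\Omega)}$ rather than being absorbed via the first bound; this retains sharper information that will be useful when differentiating the control-to-state map in Section \ref{SEC:OPT}.
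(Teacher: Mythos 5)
Your proposal is correct and takes essentially the same route as the paper: the paper subtracts the two weak formulations and tests with $\bu_1-\bu_2$, respectively $\hu_1-\hu_2$, using the uniform coercivity \eqref{ass:coerc}, the Lipschitz bounds from \ref{ass:elasticity} and \ref{ass:chi}, the uniform state bound \eqref{reg:state} and Korn's inequality, which is exactly the energy estimate you obtain by recasting the difference equations as instances of \eqref{sys:abs} and citing \eqref{reg:abs} with zero $\ff$, $\gg$ and the uniform-in-$\bph$ ellipticity constants. The sign conventions in your $\FFF_1,\FFF_2,\FFF_3$ are immaterial since only $\norma{\FFF}_{L^2(\Omega)}$ enters \eqref{reg:abs}, and your retention of the $\E(\ov H)$, $\E(\hat H)$ contributions in the difference equations is, if anything, more careful than the paper's display \eqref{wf:CD:1}.
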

\begin{proof}
To start, let us set
\begin{align*}
	& \bph: = \bph_1 - \bph_2,
	\quad 
	\bu:= \bu_1-\bu_2,
	\quad
	\hu:= \hu_1-\hu_2.
\end{align*}
Then, we consider the difference between the variational equalities \eqref{wf:sys:1}--\eqref{wf:sys:2} written for $(\bph_1,\bu_1,\hu_1)$ and for $(\bph_2,\bu_2,\hu_2)$ to infer that
\begin{align}
	& 
	\iO {\big(\ov \CCC(\bph_1) \E(\bu_1)  - \ov \CCC(\bph_2) \E(\bu_2)\big) : \E( \ov \bz)}
	= 0,
	\label{wf:CD:1}
	\\
	& 
	\notag
	\iO {\big(\hat \CCC(\bph_1) \E(\hu_1)  - \hat \CCC(\bph_2) \E(\hu_2) \big): \E( \hat \bz)}
	\\ 
	\label{wf:CD:2}
	& \quad 
	- \iO {\big(\hat \CCC(\bph_1) \chi(\bph_1) \E(\bu_1) - \hat \CCC(\bph_2)\chi(\bph_2) \E(\bu_2)\big) :\E(\hat \bz)} = 	
	0,
\end{align}
for all $\ov \bz \in \ovHD$ and $\hat \bz \in \hatHD.$
Choosing $\ov \bz = \bu$ and invoking condition \eqref{ass:coerc} yields
\begin{equation}\label{cd:bu}
\begin{aligned}
C_0 \norma{\E(\bu)}_{L^2(\Omega)}^2 & \leq - \iO{(\ov \CCC(\bph_1)-\ov\CCC(\bph_2)) \E(\bu_2) : \E( \bu)} \\
& \leq \frac{C_0}{2} \norma{\E(\bu)}_{L^2(\Omega)}^2 + C \norma{\bph}_{L^\infty(\Omega)}^2.
\end{aligned}
\end{equation}
By Korn's inequality we infer
\begin{align}\label{cd:est:1}
\norma{\bu_1 - \bu_2}_{H^1(\Omega)} \leq C \norma{\bph_1- \bph_2}_{L^\infty(\Omega)}.
\end{align}
Then, inserting $\hat \bz = \hu$ in \eqref{wf:CD:2}, and invoking the Lipschitz continuity of $\hat{\CCC}$ and $\chi$ from \ref{ass:elasticity} and \ref{ass:chi}, as well as \eqref{cd:est:1}, yields
\begin{equation}\label{cd:hu}
\begin{aligned}
& C_0 \norma{\E(\hu)}_{L^2(\Omega)}^2 \\
& \quad \leq - \iO {(\hat \CCC(\bph_1)-\hat\CCC(\bph_2)) \E(\hu_2) : \E( \hu) + (\hat \CCC(\bph_1)-\hat\CCC(\bph_2)) \chi(\bph_1) \E(\bu_1)  :\E( \hu)}\\
& \qquad -\iO {\hat \CCC(\bph_2)(\chi(\bph_1)  - \chi(\bph_2) ) \E(\bu_1) :\E( \hu)} - \iO {\hat \CCC(\bph_2) \chi(\bph_2) \E(\bu) :\E( \hu)} \\
& \quad \leq \frac{C_0}{2} \norma{\E(\hu)}_{L^2(\Omega)}^2 + C \Big ( \norma{\E(\bu)}_{L^2(\Omega)}^2 + \norma{\bph}_{L^\infty(\Omega)}^2 \Big ).
\end{aligned}
\end{equation}
Applying Korn's inequality leads to \eqref{cd:est}. 
\end{proof}

\begin{cor}\label{COR:cts}
Suppose that \ref{ass:dom}--\ref{ass:data} hold.
Let $R > 0$ and let $\{\bph_n\}_{n \in \NNN}$ be a sequence of functions in $L^\infty(\Omega, \RRR^L)$ such that $\| \bph_n \|_{L^\infty(\Omega)} \leq R$ for all $n \in \NNN$ and $\bph_n \to \bph_*$ strongly in $L^1(\Omega, \RRR^L)$ as $n \to \infty$.  Let $(\bu_n, \hu_n)$ denote the solutions to \eqref{wf:sys:1} and \eqref{wf:sys:2} corresponding to data $\bph_n$,  $\ov{\FF}$, $\hat{\FF}$, $\ov{\gg}$, $\hat{\gg}$, ${\ov H}$ and ${\hat H}$.  Then, it holds that, as $n \to \infty$,
\[
\bu_n \to \bu_* \text{ strongly in } \ovHD, \quad \hu_n \to \hu_* \text{ strongly in } \hatHD,
\]
where $(\bu_*, \hu_*) \in \ovHD \times \hatHD$ are the unique solutions to \eqref{wf:sys:1} and \eqref{wf:sys:2} corresponding to data $\bph_*$, $\ov{\FF}$, $\hat{\FF}$, $\ov{\gg}$, $\hat{\gg}$, ${\ov H}$ and ${\hat H}$.
\end{cor}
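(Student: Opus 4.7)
The strategy is a compactness--uniqueness argument for the weak limit, upgraded to strong convergence by a direct energy test. Observe that Theorem~\ref{THM:CD} cannot be applied directly, since only $L^1$-convergence of $\bph_n$ is assumed, not $L^\infty$-convergence. However, the uniform bound $\no{\bph_n}_{L^\infty(\Omega)} \leq R$ combined with $\bph_n \to \bph_*$ in $L^1$ forces $\bph_* \in L^\infty$ with $\no{\bph_*}_{L^\infty(\Omega)} \leq R$ (via an a.e.-convergent subsequence), and the interpolation $\no{f}_{L^p}^p \leq \no{f}_{L^\infty}^{p-1}\no{f}_{L^1}$ yields $\bph_n \to \bph_*$ strongly in $L^p(\Omega,\RRR^L)$ for every $p \in [1,\infty)$. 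The Lipschitz continuity of $\ov\CCC,\hat\CCC$ (from \ref{ass:elasticity}) and of $\chi$ on bounded sets (from \ref{ass:chi}) then transfers this to $\ov\CCC(\bph_n) \to \ov\CCC(\bph_*)$, $\hat\CCC(\bph_n) \to \hat\CCC(\bph_*)$ and $\chi(\bph_n) \to \chi(\bph_*)$ strongly in every $L^p$, with uniform $L^\infty$-bounds.

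Next, Theorem~\ref{THM:EX:STATE} provides the uniform estimate $\no{\bu_n}_{H^1(\Omega)} + \no{\hu_n}_{H^1(\Omega)} \leq C$. Extracting (non-relabeled) subsequences and using Rellich, $\bu_n \rightharpoonup \bu$ and $\hu_n \rightharpoonup \hu$ weakly in $H^1$ and strongly in $L^2$. Passing to the limit in \eqref{wf:sys:1}--\eqref{wf:sys:2} is then routine: in a product such as $\iO{\ov\CCC(\bph_n)\E(\bu_n):\E(\ov\bz)}$, the factor $\ov\CCC(\bph_n)\E(\ov\bz)$ converges strongly in $L^2$ by dominated convergence, while $\E(\bu_n) \rightharpoonup \E(\bu)$ weakly in $L^2$; the trilinear terms involving $\chi$ are handled analogously using the uniform $L^\infty$-bound on $\chi(\bph_n)$. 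The limit equations identify $(\bu,\hu)$ as the solution associated with $\bph_*$, and uniqueness in Theorem~\ref{THM:EX:STATE} gives $(\bu,\hu)=(\bu_*,\hu_*)$; the standard subsequence principle upgrades weak convergence to the whole sequences.

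For strong convergence I would subtract \eqref{wf:sys:1} at $\bph_n$ and $\bph_*$ and test with $\ov\bz = \bu_n - \bu_*$; the coercivity \eqref{ass:coerc} and Cauchy--Schwarz give
\[
C_0\no{\E(\bu_n-\bu_*)}_{L^2(\Omega)}^2 \leq \no{(\ov\CCC(\bph_n)-\ov\CCC(\bph_*))\E(\bu_* + \ov H)}_{L^2(\Omega)}\no{\E(\bu_n-\bu_*)}_{L^2(\Omega)},
\]
and the first factor on the right vanishes by dominated convergence (pointwise a.e.\ convergence to $0$ with the fixed $L^2$-majorant $2C_2 R\,|\E(\bu_* + \ov H)|$). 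Korn's inequality then yields $\bu_n \to \bu_*$ in $H^1$. Repeating the scheme for \eqref{wf:sys:2} with $\hat\bz = \hu_n - \hu_*$ produces two sorts of extra terms: coefficient-difference terms of the form $(\hat\CCC(\bph_n)\chi(\bph_n) - \hat\CCC(\bph_*)\chi(\bph_*))\E(\bu_* + \ov H)$, which again vanish in $L^2$ by dominated convergence, and a residual term $\iO{\hat\CCC(\bph_n)\chi(\bph_n)\E(\bu_n - \bu_*):\E(\hu_n - \hu_*)}$, which vanishes via Cauchy--Schwarz using the uniform $L^\infty$-bound on $\hat\CCC(\bph_n)\chi(\bph_n)$ together with the strong $H^1$-convergence of $\bu_n$ just established. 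A final use of coercivity and Korn delivers $\hu_n \to \hu_*$ in $H^1$. The main obstacle is organising the subtraction in the $\hu$-equation so that weakly convergent factors like $\E(\bu_n)$ never appear multiplied by differences such as $\chi(\bph_n) - \chi(\bph_*)$; this is handled by systematically inserting the splitting $\E(\bu_n) = \E(\bu_*) + \E(\bu_n - \bu_*)$ before estimating.
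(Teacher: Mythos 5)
Your argument is correct and follows essentially the same route as the paper's proof: uniform $H^1$ bounds and weak compactness, then testing the difference of the state equations with $\bu_n-\bu_*$ and $\hu_n-\hu_*$, sending the coefficient-difference terms to zero by dominated convergence, and concluding with coercivity and Korn's inequality. The only differences are cosmetic: you spell out the identification of the weak limit via passage to the limit in the weak formulations and keep the ${\ov H}$, ${\hat H}$ terms explicit, steps the paper leaves implicit.
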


\begin{proof}
From \eqref{reg:state} we infer that $\bu_n$ and $\hu_n$ are bounded in $\ovHD$ and $\hatHD$, respectively, and thus there exist limit functions $(\bu_*, \hu_*) \in \ovHD \times \hatHD$ such that, along a non-relabelled subsequence, $\bu_n \rightharpoonup \bu_*$ in $\ovHD$ and $\hu_n \rightharpoonup \hu_*$ in $\hatHD$.  To obtain strong convergence, in \eqref{wf:CD:1} and \eqref{wf:CD:2} we substitute $\bph_1 = \bph_n$, $\bph_2 = \bph_*$, $\bu_1 = \bu_n$, $\bu_2 = \bu_*$, $\bu = \bu_n - \bu_*$ and $\hu = \hu_n - \hu_*$.  Then, by virtue of the dominated convergence theorem, we infer the strong convergence, as $n \to \infty$,
\begin{align*}
 ({\ov \CCC}(\bph_n) - {\ov \CCC}(\bph_*))\E(\bu_*) \to \0 \quad \text{ in } L^2(\Omega, \RRR^{d \times d}).
\end{align*}
Hence, in the analogue of the first inequality in \eqref{cd:bu} we see that the integral on the right-hand side converges to zero, which implies via Korn's inequality that 
\[
\norma{\bu_n - \bu_* }_{H^1(\Omega)} \to 0.
\]
By the generalised dominated convergence theorem, we have the strong convergences
\begin{align*}
\begin{cases}
 ({\hat \CCC}(\bph_n) - {\hat \CCC}(\bph_*)) \chi(\bph_n) \E(\bu_n) & \to \0 \\
 {\hat \CCC}(\bph_*) (\chi(\bph_n) - \chi(\bph_*)) \E(\hu_n) & \to \0 \\
 \hat{\CCC}(\bph_*) \chi(\bph_*) \E(\bu_n - \bu_*) & \to \0
 \end{cases}  \text{ in } L^2(\Omega, \RRR^{d \times d}),
\end{align*}
and thus, in the analogue of the first inequality in \eqref{cd:hu} we see that the integral on the right-hand side converges to zero, leading to the assertion
\[
\norma{\hu_n - \hu_*}_{H^1(\Omega)} \to 0.
\]
Thus, by combining the weak convergences with the above norms convergence the claim follows.
\end{proof}
The above analysis for systems \eqref{bu:new:sys} and \eqref{hu:new:sys} allows us to define some solution operators. Namely, we introduce
\begin{align}\label{controltostateoperator}
	\S : L^\infty(\Omega,\RRR^L) \to \hatHD,
	\quad 
	\S: \bph \mapsto \hu = \hu(\bph),
\end{align}
as well as the intermediate operators:
\begin{alignat*}{2}
	& \S_1: L^\infty(\Omega,\RRR^L)  \to L^\infty(\Omega,\RRR^L)  \times \ovHD,
	\quad 
	&& \S_1: \bph \mapsto (\S_1^1(\bph),\S_1^2(\bph)) = (\bph,{\bu}),
	\\
	& \S_2:  L^\infty(\Omega,\RRR^L)  \times \ovHD \to \hatHD,
	\quad 
	&& \S_2: (\bph,\bu) \mapsto \hu,
\end{alignat*}
where $\bu = \bu (\bph)$ and $\hu= \hu(\bph,\bu)$ are the unique solutions obtained from Theorem \ref{THM:EX:STATE}. Then, the overall solution operator $\S$ in \eqref{controltostateoperator} is simply the composition of the intermediate mappings $\S_1$ and $\S_2$, i.e., $\S=\S_2 \circ \S_1$. In particular, we can define the {\it reduced cost functional}
\[
\Jred: \Uad \to \RRR, \quad 
\Jred: \bph \mapsto  J(\bph, \S(\bph)).
\]

\subsection{Existence of optimal designs}
\begin{thm}\label{THM:EXOPT}
Under \ref{ass:dom}--\ref{ass:utar}, the optimisation problem {\bf (P)} admits at least one solution.
\end{thm}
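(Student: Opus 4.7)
The plan is to apply the direct method of the calculus of variations to the reduced functional $\Jred$ on the admissible set $\Uad$. First, I note that $\Jred \geq 0$ because each summand in $J$ is nonnegative (the quadratic shape-matching term, the gradient square, and $\Psi = \tilde\Psi + I_\Delta \geq 0$, with $I_\Delta(\bph) = 0$ on $\Uad$). Hence $\inf_{\Uad} \Jred \in [0,\infty)$, and I may pick a minimising sequence $\{\bph_n\}_{n \in \NNN} \subset \Uad$ with $\Jred(\bph_n) \to \inf_{\Uad}\Jred$. Since $\bph_n(\x) \in \Delta^L$ a.e., one has $|\bph_n(\x)| \leq 1$ pointwise, so $\{\bph_n\}$ is uniformly bounded in $L^\infty(\Omega,\RRR^L)$; combined with the uniform bound on $\eps\int_\Omega |\nabla \bph_n|^2 \dx$ coming from $\Jred(\bph_n)$ being bounded, this gives a uniform $H^1(\Omega,\RRR^L)$ bound.

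Next I extract (along a non-relabelled subsequence) a weak $H^1$-limit $\bph_* \in H^1(\Omega,\RRR^L)$, with $\bph_n \to \bph_*$ strongly in $L^p(\Omega,\RRR^L)$ for any $p \in [1,\infty)$ by Rellich--Kondrachov, and pointwise a.e.\ on a further subsequence. Because $\Delta^L$ is closed and convex, the pointwise limit satisfies $\bph_*(\x) \in \Delta^L$ a.e., so $\bph_* \in \Uad$. Now I invoke Corollary~\ref{COR:cts}: the uniform $L^\infty$ bound $\|\bph_n\|_{L^\infty(\Omega)} \leq 1$ and the strong $L^1$-convergence $\bph_n \to \bph_*$ yield
\[
 \bu_n := \S_1^2(\bph_n) \to \bu_* \text{ in } \ovHD, \qquad \hu_n := \S(\bph_n) \to \hu_* = \S(\bph_*) \text{ in } \hatHD.
\]
In particular, the trace of $\hu_n$ converges strongly in $L^2(\Gamma^{\rm tar},\RRR^d)$, so the shape-matching boundary integral in \eqref{cost} passes to the limit as an equality.

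For the regularising term I argue separately: $\bph \mapsto \int_\Omega |\nabla \bph|^2 \dx$ is convex and continuous on $H^1$, hence weakly lower semicontinuous, giving
\[
 \int_\Omega |\nabla \bph_*|^2 \dx \leq \liminf_{n \to \infty} \int_\Omega |\nabla \bph_n|^2 \dx.
\]
For the potential, since $\bph_n, \bph_* \in \Uad$ we have $\Psi(\bph_n) = \tilde\Psi(\bph_n)$ and $\Psi(\bph_*) = \tilde\Psi(\bph_*)$ by \ref{ass:potential}; the $C^{1,1}$ (hence continuous and locally bounded) function $\tilde\Psi$ composed with $\bph_n$ is uniformly bounded and converges pointwise a.e., so dominated convergence yields $\int_\Omega \Psi(\bph_n) \dx \to \int_\Omega \Psi(\bph_*) \dx$. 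Assembling the three contributions,
\[
 \Jred(\bph_*) = J(\bph_*, \hu_*) \leq \liminf_{n \to \infty} J(\bph_n, \hu_n) = \inf_{\Uad} \Jred,
\]
so $\bph_*$ is a minimiser.

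The only subtle point is the continuity of the control-to-state map under the weak, rather than strong, convergence one gets for free from the $H^1$-bound on $\{\bph_n\}$; this is exactly the content of Corollary~\ref{COR:cts}, whose hypothesis is a uniform $L^\infty$-bound plus $L^1$-convergence, both guaranteed here by the simplex constraint and the compact embedding $H^1 \hookrightarrow L^1$. Everything else (weak lower semicontinuity of the Dirichlet integral, continuity of the smooth part of the potential, weak closedness of $\Uad$) is standard, so no further obstacle arises.
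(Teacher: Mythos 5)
Your proposal is correct and follows essentially the same route as the paper: the direct method with the $H^1\cap L^\infty$ bound from the simplex constraint and the Ginzburg--Landau term, Corollary~\ref{COR:cts} for strong convergence of the states and hence of the boundary matching term, and weak lower semicontinuity of the gradient term. The only cosmetic difference is that you treat the potential term by dominated convergence (giving actual convergence), whereas the paper uses Fatou's lemma for a liminf inequality; both are valid and the conclusion is identical.
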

\begin{proof}As the proof is nowadays standard with the direct method of the calculus of variations, let us briefly outline the main points. Consider a minimising sequence $\{\bph_n\}_{n \in \NNN} \subset \Uad$ for the reduced cost functional $\Jred$, which satisfies
\begin{align*}
	\lim_{n\to \infty} \Jred(\bph_n)  = \inf_{\bph \in \Uad} \Jred(\bph) \geq 0.
\end{align*}
This yields that $\{\bph_n\}_{n \in \NNN}$ is bounded in $H^1(\Omega,\RRR^L) \cap L^\infty(\Omega,\RRR^L)$. By standard compactness arguments, since $\Uad$ is closed and convex, we obtain a limit function $\bph^* \in \Uad$ such that $\bph_n \to \bph^*$ weakly* in $H^1(\Omega, \RRR^L) \cap L^\infty(\Omega, \RRR^L)$ along a non-relabelled subsequence. Consequently, by \eqref{reg:state} the sequence $\{\hu_n = \S(\bph_n)\}_{n \in \NNN}$ is bounded in $\hatHD$, and on invoking Corollary \ref{COR:cts} there exists a limit function $\hu^* \in \hatHD$ such that, along a non-relabelled subsequence, $\hu_n \to \hu^*$ strongly in $\hatHD$ as $n \to \infty$. Continuity of the boundary trace operator gives $\hu_n \to \hu^*$ strongly in $L^2(\hat{\Gamma}_N, \RRR^d)$, and thus
\[
\int_{\Gamma^{\rm tar}} W(\hu_n - \bm{u}^{\rm tar}) \cdot (\hu_n - \bm{u}^{\rm tar}) \dH \to \int_{\Gamma^{\rm tar}}W(\hu^* - \bm{u}^{\rm tar}) \cdot (\hu^* - \bm{u}^{\rm tar}) \dH.
\]
By Fatou's lemma and the a.e.~convergence of $\bph_n$ to $\bph^*$, we have 
\begin{align*}
	\liminf_{n \to \infty} \| \Psi(\bph_n) \|_{L^1(\Omega)} \geq \| \Psi(\bph^*) \|_{L^1(\Omega)},
\end{align*}
and using also the weak lower semicontinuity of the $L^2$-norm, we infer that
\[
\Jred(\bph^*) \leq \lim_{n \to \infty} \Jred(\bph_n) = \inf_{\bph \in \Uad} \Jred(\bph).
\]
This shows that $\bph^*$ is a solution to $(\bf P)$.
\end{proof}

\section{Optimality conditions}
\label{SEC:OPT}
To derive the first order necessary optimality conditions for $\bph^*$, we first study the linearised system for the {\it linearised variables} introduced below, and use {\it adjoint variables} to provide a simplification of the optimality condition.  

\subsection{Linearised systems and Fr\'echet differentiability}
Here, we analyse some differentiability properties of the solutions operators $\S_1$ and $\S_2$ introduced above. This will help us to formulate the first order optimality conditions of {\bf (P)}.

\begin{thm}\label{THM:LIN:S1}
The solution operator $\S_1$ is Fr\'echet differentiable at $\bph$ as a mapping 
from $L^\infty(\Omega, \RRR^L)$ into $L^\infty(\Omega, \RRR^L) \times \ovHD$.
Moreover, it holds that
\begin{align*}
	D \S_1 \in {\cal L}(L^\infty(\Omega, \RRR^L) , L^\infty(\Omega, \RRR^L) \times \ovHD),
\end{align*}
and its directional derivative at $\bph \in L^\infty(\Omega, \RRR^L)$ along a direction $\hh \in  L^\infty(\Omega, \RRR^L)$ is given by
\begin{align}\label{DS1}
	D \S_1 (\bph) (\hh) = (\hh,\bv),
\end{align}
where $\bv \in \ovHD$ is the unique weak solution to the following system:
\begin{subequations}\label{sys:LIN}
\begin{alignat}{2}
	\label{SYS:LIN:1}
	 -\div \big(\ov \CCC'(\bph)\hh \E (\bu) 
	+ \ov \CCC(\bph)\E (\bv)\big) 
	& = \0 
	\qquad && \text{ in } \Omega,
	\\
	\label{SYS:LIN:3}
	 \bv 
	& = \0 
	\qquad && \text{ on } \ov \Gamma_D,	
	\\
	\label{SYS:LIN:5}
	 (\ov \CCC'(\bph)\hh \E (\bu) 
	+ \ov \CCC(\bph)\E (\bv)) \nn 
	& = \0 
	\qquad && \text{ on } \ov \Gamma_N,
\end{alignat}
\end{subequations}
in the sense
\begin{align}\label{wf:sys:LIN}
\iO{ \ov{\CCC}(\bph) \E(\bv) : \E(\bz)} + \iO{ \ov{\CCC}'(\bph) \hh \E(\bu) : \E(\bz) } = 0
\end{align}
for all $\bz \in \ovHD$, where $\bu$ is the unique solution to \eqref{wf:sys:1} associated to $\bph$ obtained from Theorem~\ref{THM:EX:STATE}.
\end{thm}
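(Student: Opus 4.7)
Since $\S_1(\bph)=(\bph,\bu(\bph))$, its first component is the identity, which is trivially Fr\'echet differentiable with derivative the identity on $L^\infty(\Omega,\RRR^L)$. Hence the task reduces to showing that the second component $\S_1^2:\bph\mapsto\bu(\bph)$ is Fr\'echet differentiable from $L^\infty(\Omega,\RRR^L)$ into $\ovHD$ with derivative $\hh\mapsto\bv$, where $\bv$ solves \eqref{wf:sys:LIN}. The plan is first to establish the well-posedness of \eqref{wf:sys:LIN}, then to control the Taylor-type remainder by a direct energy estimate relying on the $C^{1,1}$ regularity of $\ov\CCC$ and on the continuous dependence estimate of Theorem~\ref{THM:CD}.

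The linearised problem \eqref{wf:sys:LIN} is a special case of the abstract system \eqref{sys:abs} with the choices $\CCC=\ov\CCC(\bph)$, $\FFF=\ov\CCC'(\bph)\hh\,\E(\bu)$, $\ff=\0$, $\gg=\0$, $\Gamma_D=\ov\Gamma_D$, $\Gamma_N=\ov\Gamma_N$. Since $\bu\in\ovHD$ by Theorem~\ref{THM:EX:STATE} and $\ov\CCC'(\bph)\hh\in L^\infty(\Omega,\RRR^{d\times d\times d\times d})$ by \ref{ass:elasticity}, we have $\FFF\in L^2(\Omega,\RRR^{d\times d})$, and the coercivity/boundedness bounds \eqref{ass:coerc} transfer to $\ov\CCC(\bph)$ with constants independent of $\bph$. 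Proposition~\ref{THM:EX:ABS} then yields a unique $\bv\in\ovHD$ together with the a priori bound
\begin{equation*}
\norma{\bv}_{H^1(\Omega)}\leq C\,C_2\,\norma{\hh}_{L^\infty(\Omega)}\norma{\E(\bu)}_{L^2(\Omega)}\leq C'\,\norma{\hh}_{L^\infty(\Omega)},
\end{equation*}
which already shows that the map $\hh\mapsto\bv$ is linear and bounded, giving a natural candidate for $D\S_1^2(\bph)$.

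To obtain Fr\'echet differentiability, fix $\bph\in L^\infty(\Omega,\RRR^L)$ and let $\hh\in L^\infty(\Omega,\RRR^L)$ with $\norma{\hh}_{L^\infty(\Omega)}$ sufficiently small. Set $\bu=\bu(\bph)$, $\bu^\hh=\bu(\bph+\hh)$, and denote by $\bv^\hh$ the solution of \eqref{wf:sys:LIN} associated with $\hh$. Define the remainder $\bm{r}^\hh:=\bu^\hh-\bu-\bv^\hh\in\ovHD$. Subtracting the weak formulations of \eqref{wf:sys:1} for $\bph+\hh$ and $\bph$ and then subtracting \eqref{wf:sys:LIN}, one obtains, for every $\bz\in\ovHD$,
\begin{align*}
\iO{\ov\CCC(\bph+\hh)\E(\bm{r}^\hh):\E(\bz)}
&= -\iO{\bigl(\ov\CCC(\bph+\hh)-\ov\CCC(\bph)-\ov\CCC'(\bph)\hh\bigr)\E(\bu):\E(\bz)}\\
&\quad -\iO{\bigl(\ov\CCC(\bph+\hh)-\ov\CCC(\bph)\bigr)\E(\bu^\hh-\bu):\E(\bz)}.
\end{align*}
Testing with $\bz=\bm{r}^\hh$ and invoking the uniform coercivity \eqref{ass:coerc} gives $C_0\norma{\E(\bm{r}^\hh)}_{L^2(\Omega)}^2$ on the left. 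For the first term on the right, the $C^{1,1}$ regularity of $\ov\CCC$ supplied by \ref{ass:elasticity} yields the pointwise Taylor estimate $|\ov\CCC(\bph+\hh)-\ov\CCC(\bph)-\ov\CCC'(\bph)\hh|\leq C\norma{\hh}_{L^\infty(\Omega)}^2$ (obtained from $\int_0^1(\ov\CCC'(\bph+t\hh)-\ov\CCC'(\bph))\hh\,dt$ together with the Lipschitz continuity of $\ov\CCC'$). For the second term, the Lipschitz continuity of $\ov\CCC$ combined with the continuous dependence estimate \eqref{cd:est} applied to $\bph_1=\bph+\hh$, $\bph_2=\bph$, gives $\norma{\bu^\hh-\bu}_{H^1(\Omega)}\leq C\norma{\hh}_{L^\infty(\Omega)}$.

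Inserting these two estimates, applying Cauchy--Schwarz and Young's inequality to absorb $\frac{C_0}{2}\norma{\E(\bm{r}^\hh)}_{L^2(\Omega)}^2$ into the left-hand side, and concluding via Korn's inequality, we arrive at
\begin{equation*}
\norma{\bm{r}^\hh}_{H^1(\Omega)}\leq C\,\norma{\hh}_{L^\infty(\Omega)}^2,
\end{equation*}
which is precisely the Fr\'echet differentiability statement $\norma{\bu(\bph+\hh)-\bu(\bph)-\bv^\hh}_{H^1(\Omega)}=o(\norma{\hh}_{L^\infty(\Omega)})$. Linearity and boundedness of $\hh\mapsto(\hh,\bv^\hh)$ then give \eqref{DS1} and $D\S_1\in\mathcal{L}(L^\infty(\Omega,\RRR^L),L^\infty(\Omega,\RRR^L)\times\ovHD)$. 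The main technical point, and the only delicate step, is the second-order Taylor estimate for $\ov\CCC$: the $C^{1,1}$ assumption in \ref{ass:elasticity} is exactly what makes the quadratic remainder available, and without it only G\^ateaux differentiability would follow.
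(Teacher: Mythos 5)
Your proposal follows essentially the same route as the paper's proof: well-posedness of the linearised system via Proposition~\ref{THM:EX:ABS} with $\FFF=\ov\CCC'(\bph)\hh\,\E(\bu)$, the second-order Taylor bound $|\ov\CCC(\bph+\hh)-\ov\CCC(\bph)-\ov\CCC'(\bph)\hh|\leq C|\hh|^2$ from the $C^{1,1}$ regularity in \ref{ass:elasticity}, the Lipschitz estimate $\norma{\bu^\hh-\bu}_{H^1(\Omega)}\leq C\norma{\hh}_{L^\infty(\Omega)}$ from Theorem~\ref{THM:CD}, and coercivity plus Young and Korn to get the quadratic remainder bound; the treatment of the identity component of $\S_1$ is also the same. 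The only flaw is a bookkeeping slip in your displayed remainder equation: if you keep $\ov\CCC(\bph+\hh)\E(\bm{r}^\hh)$ on the left, the second term on the right must be $-\iO{(\ov\CCC(\bph+\hh)-\ov\CCC(\bph))\E(\bv):\E(\bz)}$ rather than involving $\E(\bu^\hh-\bu)$; pairing $\E(\bu^\hh-\bu)$ on the right requires $\ov\CCC(\bph)\E(\bm{r}^\hh)$ on the left (the paper's grouping). As written, your identity is off by $\iO{(\ov\CCC(\bph+\hh)-\ov\CCC(\bph))\E(\bm{r}^\hh):\E(\bz)}$, since $\E(\bu^\hh-\bu)=\E(\bv)+\E(\bm{r}^\hh)$. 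This is harmless: either corrected grouping yields the same $O(\norma{\hh}_{L^\infty(\Omega)}^2)$ bound (your a priori estimate already gives $\norma{\E(\bv)}_{L^2(\Omega)}\leq C\norma{\hh}_{L^\infty(\Omega)}$, and even the spurious term could be absorbed into the coercive left-hand side for $\norma{\hh}_{L^\infty(\Omega)}$ small), so the argument and conclusion stand after this minor fix.
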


\begin{proof}
Firstly, the unique solvability of the linearised system \eqref{sys:LIN} follows directly from the application of Proposition \ref{THM:EX:ABS} upon choosing
\begin{align*}
	& \uu = \bv,
	\quad
	\CCC= \ov \CCC(\bph),	
	\quad 
	\FFF= \ov \CCC'(\bph) \hh \E(\bu), 
	\quad 
	\ff=\gg = \0,
	\quad \Gamma_D = \ov \Gamma_D,
	\quad
	\Gamma_N = \ov \Gamma_N.
\end{align*}
Next, we take $\bph \in \Uad$ and $\hh \in L^\infty(\Omega, \RRR^L)$ such that $\bph^{\hh} := \bph + \hh \in \Uad$, and set $\bu^\hh= \S_1^2(\bph^\hh)$, i.e., $(\bph^\hh, \bu^\hh)=\S_1(\bph^\hh)$.  Since the first component of $\S_1$ is just the identity in $L^\infty(\Omega, \RRR^L)$, we only need to investigate the Fr\'echet differentiability of the second component $\S_1^2$.  In this direction, we denote
\begin{align*}
	\ww : = \bu^\hh - \bu - \bv \in \ovHD
\end{align*}
with $\bv$ being the unique solution to the linearised system \eqref{sys:LIN} associated to $\bph$ and $\hh$. Our aim is to show the existence of a positive constant $C$ such that
\begin{align}\label{Fre:bu}
\| \ww \|_{H^1(\Omega)} = \| \S_1^2(\bph^\hh) - \S_1^2(\bph) - D\S_1^2(\bph) \hh \|_{H^1(\Omega)} \leq C \| \hh \|_{L^\infty(\Omega)}^2,
\end{align}
which would then imply the Fr\'echet differentiability of the operator $\S_1^2$. To this end, we subtract from \eqref{wf:sys:1} for $\bph^\hh$ the sum of \eqref{wf:sys:1} for $\bph$ and \eqref{wf:sys:LIN} for $\bv$ to obtain
\begin{align*}
	& \iO {  \ov \CCC(\bph ) \E(\ww): \E(\bz) }
	+ \iO { (\ov \CCC(\bph +\hh)- \ov \CCC(\bph) ) (\E(\bu^\hh) - \E(\bu) )  : \E(\bz)}
	\\ & \quad 
	+ \iO {[\ov \CCC(\bph+\hh) - \ov \CCC(\bph ) - \ov \CCC'(\bph) \hh] \E(\bu) : \E(\bz)} = 0
	\quad 
	\forall \bz \in \ovHD.
\end{align*}
Choosing $\bz = \ww$ and using \eqref{ass:coerc} we infer that
\begin{align*}
	 C_0 \norma{\E(\ww)}^2_{L^2(\Omega)}
	&\leq 
	- \iO { (\ov \CCC(\bph +\hh)- \ov \CCC(\bph) ) (\E(\bu^\hh) - \E(\bu) )  : \E(\ww)}
	\\ & \quad 
	- \iO {[\ov \CCC(\bph+\hh) - \ov \CCC(\bph ) - \ov \CCC'(\bph) \hh] \E(\bu) : \E(\ww)} .
\end{align*}
Lipschitz continuity of $\ov \CCC$ yields a positive constant $C_{\ov \CCC'} $ such that
\begin{align}\label{TaylorC}
	|\ov \CCC(\bph+\hh) - \ov \CCC(\bph ) - \ov \CCC'(\bph) \hh | 
	\leq 
	|\hh| \int_0^1 |\ov \CCC'(\bph + s \hh) - \ov \CCC'(\bph)| \ds
	\leq C_{\ov \CCC'} |\hh|^2,
\end{align}
and keeping in mind the estimate obtained from Theorem \ref{THM:CD}:
\begin{align}\label{cd:frechet:1}
	\norma{\bu^\hh - \bu}_{H^1(\Omega)} \leq C \norma{\hh}_{L^\infty(\Omega)},
\end{align}
we find that \begin{align*}
	& \iO { (\ov \CCC(\bph +\hh)- \ov \CCC(\bph) ) (\E(\bu^\hh) - \E(\bu) )  : \E(\ww)}
	\\ & \quad 
	\leq 
	C \norma{\hh}_{L^\infty(\Omega)} \norma{\E(\bu^\hh) - \E(\bu)}_{L^2(\Omega)} \norma{\E(\ww)}_{L^2(\Omega)}
	\leq 
	\frac {C_0}4 \norma{\E(\ww)}_{L^2(\Omega)}^2 
	+ C \norma{\hh}_{L^\infty(\Omega)}^4,
	\\
	 & \iO {[\ov \CCC(\bph+\hh) - \ov \CCC(\bph ) - \ov \CCC'(\bph) \hh] \E(\bu) : \E(\ww)} 
	 \\ & \quad
	\leq 
	C\norma{\hh}_{L^\infty(\Omega)}^2\norma{\E(\bu)}_{L^2(\Omega)}\norma{\E(\ww)}_{L^2(\Omega)} \leq 
	\frac {C_0}4\norma{\E(\ww)}_{L^2(\Omega)}^2 
	+ C \norma{\hh}_{L^\infty(\Omega)}^4.
\end{align*}
Then, by Korn's inequality we infer \eqref{Fre:bu}, and whence the claimed Frech\'et differentiability of $\S_1$.
\end{proof}

Before presenting the Fr\'echet differentiability of $\S_2$, let us provide a formal discussion. Recall that $\hu = \S_2(\bph, \bu)$ and thus the directional derivative $D \S_2(\bph, \bu)(\hh, \kk)$ of $\S_2$ at $(\bph, \bu)$ along a direction $(\hh, \kk)$ will be given by 
\[
D \S_2(\bph, \bu)(\hh, \kk) = D_{\bph} \S_2(\bph, \bu) (\hh) + D_{\bu} \S_2(\bph, \bu) (\kk)
\]
where $D_{\bph}$ and $D_{\bu}$ represent the partial derivatives with respect to $\bph$ and $\bu$, respectively.  Hence, we expect $\hat{\bth} := D \S_2(\bph, \bu)(\hh, \kk)$ to be a sum of two functions $\hv := D_{\bph} \S_2(\bph, \bu) (\hh)$ and $\hw := D_{\bu} \S_2(\bph, \bu) (\kk)$, and the Fr\'echet differentiability of $\S_2$ can be established by demonstrating
\begin{align}\label{Fre:2}
\| \S_2(\bph+\hh, \bu+\kk) - \S_2(\bph, \bu) - \hat{\bth} \|_{H^1(\Omega)} \leq C \| (\hh, \kk) \|_{L^\infty(\Omega) \times H^1(\Omega)}^2.
\end{align}
The result is formulated as follows.
\begin{thm}\label{THM:LIN:S2}
The solution operator $\S_2$ is Fr\'echet differentiable at $(\bph, \bu)$ as a mapping from $L^\infty(\Omega, \RRR^L) \times \ovHD$ into $\hatHD$.
Furthermore, 
\begin{align*}
	D\S_2 \in {\cal L}(L^\infty(\Omega, \RRR^L) \times \ovHD,  \hatHD),
\end{align*}
and its directional derivative at $(\bph, \bu) \in L^\infty(\Omega, \RRR^L) \times \ovHD$ along a direction $(\hh, \kk) \in  L^\infty(\Omega, \RRR^L)\times \ovHD$ is given by
\begin{align*}
\hat{\bth} := D \S_2 (\bph, \bu) (\hh, \kk) =D_\bph \S_2 (\bph, \bu) (\hh)
	+D_\bu \S_2 (\bph, \bu) (\kk) =: \hv + \hw,
\end{align*}
where $\hat \bth \in \hatHD$ is the unique weak solution to the following system:
\begin{subequations}\label{sys:LIN:2}
\begin{alignat}{2}
\notag
 -\div \big(\hat \CCC'(\bph) \hh ( \E (\hu)- \chi(\bph) \E(\bu)) \big) & && 
\label{SYS:LIN:sum:1} \\
 \quad - \div\big(\hat \CCC(\bph) ( \E (\hat \bth)- \chi'(\bph) \hh \E(\bu)- \chi(\bph) \E( \kk))\big) 
& = \0 \qquad && \text{ in } \Omega, \\
\label{SYS:LIN:sum:2}
 \hat \bth & = \0 \qquad && \text{ on } \hat \Gamma_D,
\\ 
\notag  \big(\hat \CCC'(\bph) \hh ( \E (\hu)- \chi(\bph) \E(\bu)) \big) \nn & && \\
\label{SYS:LIN:sum:3}
 \quad +  \big ( \hat \CCC(\bph) ( \E (\hat \bth)- \chi'(\bph) \hh \E(\bu)- \chi(\bph) \E( \kk)) \big) \nn & = \0 \qquad && \text{ on } \hat \Gamma_N,
\end{alignat}
\end{subequations}
in the sense that
\begin{equation}\label{wf:sys:hthe}
\begin{aligned}
& \iO{ \hat{\CCC}(\bph)(\E(\hat{\bth}) - \chi'(\bph) \hh \E(\bu) - \chi(\bph) \E(\kk)) :\E(\bz)} \\
& \quad + \iO{ \hat{\CCC}'(\bph) \hh (\E(\hu) - \chi(\bph) \E(\bu)) : \E(\bz) } = 0
\end{aligned}
\end{equation}
for all $\bz \in \hatHD$, where $\bu$ is the unique solution to \eqref{wf:sys:1} associated to $\bph$ and $\hu$ is the unique solution to \eqref{wf:sys:2} associated to $(\bph, \bu)$.  Moreover, $\hv \in \hatHD$ and $\hw \in \hatHD$ are the unique solutions to the following equations
\begin{align}
& \iO{ \hat{\CCC}(\bph) (\E(\hv) - \chi'(\bph) \hh \E(\bu)) :\E(\bz) + \hat{\CCC}'(\bph) \hh (\E(\hu) - \chi(\bph) \E(\bu)) : \E(\bz)} = 0, \label{wf:sys:hv} \\
& \iO{\hat{\CCC}(\bph) (\E(\hw) - \chi(\bph) \E(\kk)) : \E(\bz) } = 0, \label{wf:sys:hw}
\end{align}
for all $\bz \in \hatHD$.
\end{thm}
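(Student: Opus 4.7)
The plan is to mirror the strategy used in the proof of Theorem~\ref{THM:LIN:S1}, adapted to the situation where both arguments $(\bph,\bu)$ of $\S_2$ are perturbed simultaneously. I would start by establishing the unique solvability of the linearised system \eqref{sys:LIN:2} via Proposition~\ref{THM:EX:ABS}, with the choices $\uu = \hat{\bth}$, $\CCC = \hat{\CCC}(\bph)$, $\ff = \0$, $\gg = \0$, $\Gamma_D = \hat{\Gamma}_D$, $\Gamma_N = \hat{\Gamma}_N$, and collecting all inhomogeneities into
\[
\FFF := \hat{\CCC}'(\bph)\hh\,\big(\E(\hu) - \chi(\bph)\E(\bu)\big) - \hat{\CCC}(\bph)\chi'(\bph)\hh\,\E(\bu) - \hat{\CCC}(\bph)\chi(\bph)\E(\kk),
\]
which belongs to $L^2(\Omega,\RRR^{d\times d})$ by \ref{ass:elasticity}, \ref{ass:chi} and $\hu,\bu \in H^1(\Omega,\RRR^d)$. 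Analogous applications of Proposition~\ref{THM:EX:ABS} with the appropriate partial source terms yield the unique solutions $\hv$ and $\hw$ of \eqref{wf:sys:hv} and \eqref{wf:sys:hw}. By linearity of the underlying elasticity system, $\hv + \hw$ solves the same problem as $\hat{\bth}$, and uniqueness forces $\hat{\bth} = \hv + \hw$. The membership $D\S_2(\bph,\bu) \in \mathcal{L}(L^\infty(\Omega,\RRR^L)\times\ovHD, \hatHD)$ then follows directly from the linear dependence of $\FFF$ on $(\hh,\kk)$ together with the estimate \eqref{reg:abs}.

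For the Fréchet differentiability, let $(\hh,\kk) \in L^\infty(\Omega,\RRR^L)\times\ovHD$ with $\bph + \hh \in \Uad$, set $\hu^{\hh,\kk} := \S_2(\bph+\hh, \bu+\kk)$, and define
\[
\ww := \hu^{\hh,\kk} - \hu - \hat{\bth} \in \hatHD.
\]
The target is a bound of the form
\[
\norma{\ww}_{H^1(\Omega)} \leq C\big(\norma{\hh}_{L^\infty(\Omega)}^2 + \norma{\kk}_{H^1(\Omega)}^2\big),
\]
which yields \eqref{Fre:2}. I would derive a variational identity for $\ww$ by subtracting both \eqref{wf:sys:2} written at $(\bph, \bu, \hu)$ and the linearised equation \eqref{wf:sys:hthe} from \eqref{wf:sys:2} written at $(\bph+\hh, \bu+\kk, \hu^{\hh,\kk})$. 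Rearranging, the identity takes the schematic form
\[
\iO{\hat{\CCC}(\bph)\E(\ww) : \E(\bz)} = -\,(\text{remainder terms}), \quad \bz \in \hatHD,
\]
in which the remainder consists of products of two types: differences of $\hat{\CCC}$ (or of $\hat{\CCC}\chi$) paired with differences of $\E(\hu^{\hh,\kk})$, $\E(\bu+\kk)$, $\E(\hu)$, $\E(\bu)$, and second-order Taylor remainders $\hat{\CCC}(\bph+\hh) - \hat{\CCC}(\bph) - \hat{\CCC}'(\bph)\hh$ and $\chi(\bph+\hh) - \chi(\bph) - \chi'(\bph)\hh$ paired with $\E(\hu)$ or $\E(\bu)$. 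Each can be bounded using the Lipschitz properties of $\hat{\CCC}'$ and $\chi'$ from \ref{ass:elasticity}--\ref{ass:chi} (via an estimate analogous to \eqref{TaylorC}), together with the continuous dependence bound from Theorem~\ref{THM:CD}, which gives $\norma{\hu^{\hh,\kk} - \hu}_{H^1(\Omega)} \leq C(\norma{\hh}_{L^\infty(\Omega)} + \norma{\kk}_{H^1(\Omega)})$.

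The main obstacle will be the careful bookkeeping of the nonlinear triple product $\hat{\CCC}(\bph)\chi(\bph)\E(\bu)$, which under perturbation requires the decomposition
\[
\hat{\CCC}(\bph+\hh)\chi(\bph+\hh)\E(\bu+\kk) - \hat{\CCC}(\bph)\chi(\bph)\E(\bu) - \big(\hat{\CCC}'(\bph)\hh\,\chi(\bph) + \hat{\CCC}(\bph)\chi'(\bph)\hh\big)\E(\bu) - \hat{\CCC}(\bph)\chi(\bph)\E(\kk),
\]
to be split as a sum of mixed products and Taylor remainders, each genuinely quadratic in $(\hh,\kk)$. Once this is checked term by term, choosing $\bz = \ww$, invoking coercivity \eqref{ass:coerc} and Young's inequality to absorb a fraction of $\norma{\E(\ww)}_{L^2(\Omega)}^2$ on the left, and finally applying Korn's inequality yields the desired quadratic remainder bound, completing the proof.
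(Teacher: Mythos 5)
Your proposal follows essentially the same route as the paper's proof: unique solvability of \eqref{wf:sys:hthe}, \eqref{wf:sys:hv}, \eqref{wf:sys:hw} via Proposition~\ref{THM:EX:ABS}, the identification $\hat{\bth}=\hv+\hw$ by uniqueness, and the quadratic remainder estimate for $\hu^{\hh,\kk}-\hu-\hat{\bth}$ obtained by subtracting the state and linearised equations, decomposing the difference into Taylor remainders of $\hat{\CCC}$, $\chi$ and mixed products, bounding them with the Lipschitz bounds of \ref{ass:elasticity}--\ref{ass:chi} and the stability estimates from Theorem~\ref{THM:CD}, and closing with coercivity, Young and Korn. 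The bookkeeping you flag for the triple product $\hat{\CCC}(\bph)\chi(\bph)\E(\bu)$ is exactly what the paper carries out in its identities \eqref{X1}--\eqref{X2}, so the plan is sound and matches the paper's argument.
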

\begin{proof}
Unique solvability of \eqref{wf:sys:hthe}, \eqref{wf:sys:hv} and \eqref{wf:sys:hw} are obtained by application of Theorem \ref{THM:EX:ABS}, and thus we focus on demonstrating the crucial estimate \eqref{Fre:2}.  Let $\bph \in \Uad$, $\hh \in L^\infty(\Omega, \RRR^L)$ such that $\bph^\hh = \bph + \hh \in \Uad$ and $\kk \in \ovHD$.  Setting
\[
\hu = \S_2(\bph, \bu), \quad \hu^{\kk} := \S_2(\bph, \bu+\kk), \quad \hu^{\hh,\kk} := \S_2(\bph+\hh, \bu+\kk),
\]
then, setting $\bxi := \hu^{\hh,\kk} - \hu - \hat{\bth}$, \eqref{Fre:2} is equivalent to
\[
\| \bxi \|_{H^1(\Omega)} \leq C \| (\hh, \kk) \|_{L^\infty(\Omega) \times H^1(\Omega)}^2.
\]
We observe that by subtracting from \eqref{wf:sys:2} for $(\bph+\hh, \bu + \kk, \hu^{\hh,\kk})$ the sum of \eqref{wf:sys:2} for $(\bph, \bu, \hu)$ and \eqref{wf:sys:hthe} for $\hat{\bth}$, we obtain
\begin{equation}\label{Fre:hat:main}
\begin{aligned}
	& \iO {\big( \hat \CCC (\bph + \hh) ( \E (\hu^{\hh,\kk})- \chi(\bph + \hh) \E(\bu + \kk) \big)   : \E( \bz)} 
	\\ & \quad 
	- 	\iO {\hat \CCC (\bph ) ( \E (\hu)- \chi(\bph) \E(\bu))   : \E( \bz)} 
	\\ & \quad 
	- \iO {\hat \CCC'(\bph) \hh ( \E (\hu)- \chi(\bph) \E(\bu)): \E( \bz)}  
	\\ & \quad 
	- \iO { \hat \CCC(\bph) ( \E (\hat \bth)- \chi'(\bph) \hh \E(\bu)- \chi(\bph) \E( \kk) ): \E( \bz)}
	=0
	\quad \forall   \bz \in \hatHD.
\end{aligned}
\end{equation}
Before proceeding with some computations, let us point out the following identities:
\begin{equation}\label{X1}
\begin{aligned}
	& 
	\hat \CCC (\bph + \hh)  \E (\hu^{\hh,\kk})
	-\hat \CCC (\bph)  \E (\hu)
	- \hat \CCC'(\bph) \hh  \E (\hu)
	-  \hat \CCC(\bph) \E (\hat \bth)
	\\ &  \quad 
	= \hat \CCC (\bph)  \E (\bxi)
	+ (\hat \CCC (\bph + \hh) -  \hat \CCC (\bph ))   ( \E(\hu^{\hh,\kk})- \E(\hu^\hh))
	\\ & \qquad 
	+ (\hat \CCC (\bph + \hh) -  \hat \CCC (\bph ))   ( \E(\hu^{\hh})- \E(\hu))
	\\ & \qquad 
	+ [ \hat \CCC (\bph + \hh) -  \hat \CCC (\bph )-  \hat \CCC' (\bph )\hh] \E(\hu)
	=: \hat \CCC (\bph)  \E (\bxi) + {\cal Y}_1 ,
\end{aligned}
\end{equation} 
and
\begin{equation}\label{X2}
\begin{aligned}
	&  - \hat \CCC (\bph + \hh) \chi(\bph + \hh) \E(\bu+ \kk)
	+ \hat \CCC (\bph ) \chi(\bph) \E(\bu)
	+ \hat \CCC' (\bph ) \hh \chi(\bph) \E(\bu)
	\\ & \qquad 
	+ \hat \CCC (\bph )  \chi'(\bph) \hh \E(\bu)
	+ \hat \CCC (\bph )  \chi(\bph)\E( \kk)
	\\ & \quad 
	= 
	-[ \hat \CCC (\bph + \hh) -  \hat \CCC (\bph )-  \hat \CCC' (\bph )\hh] \chi(\bph) \E(\bu)
	\\ & \qquad 
	- \hat \CCC(\bph) [\chi(\bph + \hh) - \chi(\bph) - \chi'(\bph)\hh] \E(\bu)
	\\ & \qquad 
	- (\hat \CCC (\bph + \hh) -  \hat \CCC (\bph )) (\chi(\bph+ \hh)-\chi(\bph))  \E(\bu)
	\\ & \qquad 
	- (\hat \CCC (\bph + \hh) -  \hat \CCC (\bph )) (\chi(\bph+ \hh)-\chi(\bph))  \E(\kk)
	\\ & \qquad 
	- \hat \CCC (\bph ) (\chi(\bph+ \hh)-\chi(\bph))  \E(\kk)
	\\ & \qquad 
	- (\hat \CCC (\bph + \hh) -  \hat \CCC (\bph )) \chi(\bph)  \E(\kk)
	=:  {\cal Y}_2.
\end{aligned}
\end{equation}
Similar to \eqref{TaylorC}, we have, for positive constants $C_{\chi'}$ and $C_{\hat{\CCC}'}$, that
\begin{equation}\label{Taylor}
\begin{aligned}
|\chi(\bph+\hh) - \chi(\bph ) - \chi'(\bph) \hh | 
&	\leq 	|\hh| \int_0^1 |\chi'(\bph + s \hh) - \chi'(\bph)| \ds
	\leq C_{\chi'} |\hh|^2, \\
|\hat{\CCC}(\bph+\hh) - \hat{\CCC}(\bph ) - \hat{\CCC}'(\bph) \hh | 
&	\leq 	|\hh| \int_0^1 |\hat{\CCC}'(\bph + s \hh) - \hat{\CCC}'(\bph)| \ds
	\leq C_{\hat{\CCC}'} |\hh|^2,
\end{aligned}
\end{equation}
and upon choosing $\bz = \bxi$ in \eqref{Fre:hat:main}, we infer that
\begin{align}\label{Fre:est:main}
\norma{\E(\bxi)}_{L^2(\Omega)} \leq C \| {\cal Y}_1 \|_{L^2(\Omega)} + C \| {\cal Y}_2 \|_{L^2(\Omega)}.
\end{align}
Then, employing the Young and H\"older inequalities, \eqref{cd:frechet:1}, as well as the stability estimates
\[
\norma{\hu^{\hh,\kk} - \hu^{\hh}}_{H^1(\Omega)} \leq C \norma{\kk}_{H^1(\Omega)}, \quad \norma{\hu^{\hh} - \hu}_{H^1(\Omega)} \leq C \norma{\hh}_{L^\infty(\Omega)}
\]
deduced from \eqref{cd:est}, we find that
\begin{align*}
\| {\cal Y}_1\|_{L^2(\Omega)} & \leq C \| \hh \|_{L^\infty(\Omega)} \Big (\norma{\hu^{\hh,\kk} - \hu^\hh}_{H^1(\Omega)}  + \norma{\hu^{\hh} - \hu}_{H^1(\Omega)} + \norma{\hh}_{L^\infty(\Omega)} \norma{\E(\hu)}_{L^2(\Omega)}  \Big) \\
& \leq C \norma{(\hh,\kk)}_{L^\infty(\Omega) \times H^1(\Omega)}^2,\\
\| {\cal Y}_2\|_{L^2(\Omega)} & \leq C \| \hh \|_{L^\infty(\Omega)} \Big ( \| \hh \|_{L^\infty(\Omega)} \| \E(\bu) \|_{L^2(\Omega)} + \norma{\hh}_{L^\infty(\Omega)}\| \kk \|_{H^1(\Omega)} + \| \kk \|_{H^1(\Omega)} \Big ) 
\\ & \leq C \norma{(\hh,\kk)}_{L^\infty(\Omega) \times H^1(\Omega)}^2,
\end{align*}
and thus we obtain by Korn's inequality
\[
\norma{\bxi}_{H^1(\Omega)} \leq  C \norma{(\hh,\kk)}_{L^\infty(\Omega) \times H^1(\Omega)}^2,
\]
which verifies the Fr\'echet differentiability of $\S_2$. Furthermore, it is clear that by the uniqueness of the solutions, the sum $\hv + \hw$ is equal to $\hat{\bth}$. To establish the identification of the partial derivatives $D_{\bph} \S_2(\bph, \bu)(\hh) = \hv$ and $D_{\bu}\S_2(\bph, \bu)(\kk) = \hw$, we argue as follows: Consider $\kk = \bm{0}$, so that from \eqref{wf:sys:hw} we obtain that $\hw = \bm{0}$ and $\hat{\bth} = \hv$.  Then, in  \eqref{Fre:hat:main} with $\kk = \bm{0}$, we now have for $\bxi = \hu^{\hh} - \hu - \hv$ the estimate \eqref{Fre:est:main}, where 
\begin{align*}
{\cal Y}_1 & = (\hat{\CCC}(\bph + \hh) - \hat{\CCC}(\bph))(\E(\hu^{\hh}) - \E(\hu)) + [ \hat \CCC (\bph + \hh) -  \hat \CCC (\bph )-  \hat \CCC' (\bph )\hh] \E(\hu), \\
{\cal Y}_2 & = -[ \hat \CCC (\bph + \hh) -  \hat \CCC (\bph )-  \hat \CCC' (\bph )\hh] \chi(\bph) \E(\bu)
	\\ & \quad - \hat \CCC(\bph) [\chi(\bph + \hh) - \chi(\bph) - \chi'(\bph)\hh] \E(\bu)
	\\ & \quad 	- (\hat \CCC (\bph + \hh) -  \hat \CCC (\bph )) (\chi(\bph+ \hh)-\chi(\bph))  \E(\bu),
\end{align*}
where we made use of $\hu^{\hh,\bm{0}} = \hu^{\hh}$. A short calculation shows that 
\[
\|\hu^{\hh} - \hu - \hv\|_{H^1(\Omega)} \leq C \| \hh \|_{L^\infty(\Omega)}^2,
\]
which entails that $D_{\bph} \S_2(\bph, \bu)(\hh) = \hv$.  The other identification $D_{\bu}\S_2(\bph, \bu)(\kk) = \hw$ is in fact more straightforward as $\S_2(\bph, \cdot)$ is a linear operator.  This completes the proof.
\end{proof}

\subsection{Adjoint systems}
We now move to the investigation of some adjoint systems which, as typically happens in constrained minimisation problems, will allow us to simplify the formulation of the optimality conditions of $(\bf P)$.
\begin{thm}\label{THM:EX:ADJ:1}
Under \ref{ass:dom}--\ref{ass:utar}, for every $\bph \in L^\infty(\Omega, \RRR^L)$, there exists a unique solution $\hat \qq \in \hatHD$ to 
\begin{subequations}\label{adj:1}
\begin{alignat}{2}
	\label{ADJ:SECOND:1}
	 -\div \big(\hat \CCC(\bph) \E(\hat \qq)\big) & = \0 \qquad && \text{ in } \Omega,
	\\
	\label{ADJ:SECOND:2} 
	 \hat \qq & = \0 \qquad && \text{ on } \hat \Gamma_D,	
	\\
	\label{ADJ:SECOND:3}
	 (\hat \CCC(\bph)  \E(\hat \qq)) \nn & = W(\hu-\uu^{\rm tar}) \bigchi_{\Gamma^{\rm tar}} \qquad && \text{ on } \hat \Gamma_N,
\end{alignat}
\end{subequations}
in the sense
\begin{align}\label{wf:adj:q}
\iO{\hat{\CCC}(\bph) \E(\hq) : \E(\bz) } = \int_{\Gamma^{\rm tar}} W(\hu-\uu^{\rm tar}) \cdot \bz \dH \quad \forall \bz \in \hatHD.
\end{align}
Moreover, there exists a unique solution $\ov \pp \in \ovHD $ to 
\begin{subequations}\label{adj:2}
\begin{alignat}{2}
	\label{ADJ:FIRST:1}
	 -\div \big(\ov \CCC(\bph)\E (\ov \pp) -  \hat \CCC(\bph) \chi(\bph) \E(\hat \qq))\big) & = \0 \qquad && \text{ in }\Omega,
	\\
	\label{ADJ:FIRST:2}
	 \ov \pp & = \0 \qquad && \text{ on } \ov \Gamma_D,	
	\\
	\label{ADJ:FIRST:3}
	 (\ov \CCC(\bph)\E (\ov \pp) - \hat \CCC(\bph) \chi(\bph) \E(\hat \qq)) \nn & = \0 \qquad && \text{ on } \ov \Gamma_N,
\end{alignat}
\end{subequations}
in the sense
\begin{align}\label{wf:adj:p}
\iO{\ov{\CCC}(\bph) \E(\bp) : \E(\bz) } =  \iO{ \hat{\CCC}(\bph) \chi(\bph) \E(\hq) : \E(\bz) }\quad \forall \bz \in \ovHD,
\end{align}
where $\hat \qq\in \hatHD$ is the unique solution to \eqref{wf:adj:q}.
\end{thm}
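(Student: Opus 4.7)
The plan is to apply Proposition~\ref{THM:EX:ABS} twice in succession: first to produce the unique $\hq \in \hatHD$ solving \eqref{wf:adj:q}, then to feed $\E(\hq)$ as data into the second system and obtain $\bp \in \ovHD$ solving \eqref{wf:adj:p}. This sequential structure is essential because the source term in the $\bp$-equation depends on $\hq$ through the coupling $\hat{\CCC}(\bph) \chi(\bph) \E(\hq)$.

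For the first system, I would invoke Proposition~\ref{THM:EX:ABS} with the identifications $\uu = \hq$, $\CCC = \hat{\CCC}(\bph)$, $\FFF = \0$, $\ff = \0$, $\Gamma_D = \hat{\Gamma}_D$, $\Gamma_N = \hat{\Gamma}_N$ and $\gg = W(\hu - \uu^{\rm tar}) \bigchi_{\Gamma^{\rm tar}}$. The symmetry and uniform coercivity of $\hat{\CCC}(\bph)$ (with the same constants $C_0, C_1$) follow directly from \ref{ass:elasticity}. The only point requiring a short verification is that $\gg$ belongs to $H^{1/2}_{00}(\hat{\Gamma}_N, \RRR^d)^*$: since $\hu \in \hatHD$ has trace in $L^2(\Gamma, \RRR^d)$ and $\uu^{\rm tar} \in L^2(\Gamma^{\rm tar}, \RRR^d)$ by \ref{ass:utar}, with $W$ a fixed matrix and $\Gamma^{\rm tar} \subset \hat{\Gamma}_N$, one has $\gg \in L^2(\hat{\Gamma}_N, \RRR^d)$, which embeds continuously into $H^{1/2}_{00}(\hat{\Gamma}_N, \RRR^d)^*$ (as noted after Proposition~\ref{THM:EX:ABS}). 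Thus Proposition~\ref{THM:EX:ABS} delivers the unique $\hq \in \hatHD$ solving \eqref{wf:adj:q}.

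For the second system, I would apply Proposition~\ref{THM:EX:ABS} again with $\uu = \bp$, $\CCC = \ov{\CCC}(\bph)$, $\FFF = -\hat{\CCC}(\bph)\chi(\bph)\E(\hq)$, $\ff = \0$, $\gg = \0$, $\Gamma_D = \ov{\Gamma}_D$, $\Gamma_N = \ov{\Gamma}_N$. Coercivity and symmetry of $\ov{\CCC}(\bph)$ are again provided by \ref{ass:elasticity}. The key requirement $\FFF \in L^2(\Omega, \RRR^{d\times d})$ follows at once: the previous step furnishes $\E(\hq) \in L^2(\Omega, \RRR^{d\times d})$, while $|\hat{\CCC}(\bph)| \leq C_1$ from \ref{ass:elasticity} and $0 \leq \chi(\bph) \leq \chi_0$ from \ref{ass:chi} give the pointwise bound $|\FFF| \leq C_1 \chi_0 |\E(\hq)|$. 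Proposition~\ref{THM:EX:ABS} then yields the unique $\bp \in \ovHD$ satisfying \eqref{wf:adj:p}.

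There is no genuine obstacle: both parts reduce to invoking the abstract Lax--Milgram result already established. The only subtlety worth flagging in the write-up is the verification that the Neumann datum for $\hq$ sits in the correct dual space via the trace embedding, and that the sequential order $\hq$ first, then $\bp$, must be respected so that $\E(\hq)$ is already known to be an $L^2$ tensor before applying the proposition the second time.
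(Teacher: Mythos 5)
Your proposal is correct and follows exactly the route the paper takes: the paper states that the result "is simply an application" of the abstract well-posedness result (Proposition~\ref{THM:EX:ABS}) and omits the details, which you fill in correctly — including the right identifications ($\gg = W(\hu-\uu^{\rm tar})\bigchi_{\Gamma^{\rm tar}} \in L^2 \subset H^{1/2}_{00}(\hat\Gamma_N,\RRR^d)^*$ for the $\hq$-system, and $\FFF = -\hat{\CCC}(\bph)\chi(\bph)\E(\hq) \in L^2$ for the $\bp$-system) and the necessary sequential order.
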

As the proof of existence and uniqueness is simply an application of Theorem \ref{THM:EX:ABS}, we omit the details.

\begin{remark}
Notice that the adjoint variable $\bp$ to the Stage 1 deformation $\bu$ is dependent on the adjoint variable $\hq$ to the Stage 2 deformation $\hu$.  This {\it backwards} dependence has some parallels with the adjoint systems associated to time-dependent state equations, which have to be solved backwards in time.
\end{remark}

\subsection{First order optimality conditions}
\begin{thm} \label{THM:FOC:FINAL}
Under \ref{ass:dom}--\ref{ass:utar}, let $\bph^* \in \Uad$ be a minimiser to $\Jred$ with corresponding states $\bu^* = \S_1^2(\bph^*)$, $\hu^* = \S(\bph^*)$, and adjoint variables $(\bp,\hq)$ as unique solutions to \eqref{wf:adj:q} and \eqref{wf:adj:p} corresponding to $(\bph^*, \bu^*, \hu^*)$.  Then, it necessarily holds that
\begin{align}
	\notag
	& - \int_\Omega \ov \CCC'(\bph^*) (\bphi-\bph^*)\E(\bu^*) : \E(\ov \pp) \dx
	\\ & \quad 	\notag
	- \int_\Omega   \hat \CCC'(\bph^*)  (\bphi-\bph^*) ( \E (\hu^*)- \chi(\bph^*) \E(\bu^*)): \E (\hat \qq) \dx
	\\ & \quad \notag
	+ \int_\Omega  \hat \CCC(\bph^*)  \chi'(\bph^*)(\bphi-\bph^*)\E(\bu^*): \E (\hat \qq) \dx
	 \\ & \quad 
	+ 2 \gamma \eps \int_\Omega \nabla \bph^* \cdot \nabla (\bphi - \bph^*)\dx
	+ \frac \gamma \eps \int_\Omega \tilde{\Psi}_{,\bph} (\bph^*) \cdot (\bphi- \bph^*) \dx
	 \geq 0 \quad \forall \bphi \in \Uad,
	 \label{foc:final}
\end{align}
where we set $\tilde{\Psi}_{,\bph}$ as the vector of partial derivatives of $\tilde{\Psi}$.
\end{thm}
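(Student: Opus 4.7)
The plan is to exploit the convexity of $\Uad$ together with the chain rule, and then to use the two adjoint variables $\bp$ and $\hq$ to eliminate the linearised quantities $\bv = D\S_1^2(\bph^*)(\bphi-\bph^*)$ and $\hat{\bth} = D\S(\bph^*)(\bphi-\bph^*)$ from the expression of the directional derivative. Concretely, for any $\bphi \in \Uad$ and $t \in (0,1]$ the element $\bph^* + t(\bphi - \bph^*)$ lies in $\Uad$, so the minimality of $\bph^*$ yields $\liminf_{t \to 0^+} t^{-1}(\Jred(\bph^* + t(\bphi-\bph^*)) - \Jred(\bph^*)) \geq 0$. Since $\S = \S_2 \circ \S_1$ is Fr\'echet differentiable by Theorems \ref{THM:LIN:S1} and \ref{THM:LIN:S2}, and since the shape-matching term is quadratic in $\hu$ and $\tilde{\Psi}$ is $C^{1,1}$, the chain rule gives
\begin{equation*}
D\Jred(\bph^*)(\bphi-\bph^*)
= \int_{\Gamma^{\rm tar}} W(\hu^* - \uu^{\rm tar}) \cdot \hat{\bth} \dH
+ 2\gamma\eps \int_\Omega \nabla\bph^* \cdot \nabla(\bphi-\bph^*) \dx
+ \frac{\gamma}{\eps} \int_\Omega \tilde{\Psi}_{,\bph}(\bph^*)\cdot (\bphi-\bph^*) \dx,
\end{equation*}
where $\hat{\bth}$ solves \eqref{wf:sys:hthe} with data $\hh = \bphi - \bph^*$ and $\kk = \bv$.

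The core step will be to rewrite the boundary term using the adjoint systems. First, I would test \eqref{wf:adj:q} with $\bz = \hat{\bth}$, and then test the linearised identity \eqref{wf:sys:hthe} with $\bz = \hq$; subtracting and invoking the major symmetry of $\hat{\CCC}$ produces
\begin{equation*}
\int_{\Gamma^{\rm tar}} W(\hu^* - \uu^{\rm tar}) \cdot \hat{\bth} \dH
= \int_\Omega \hat{\CCC}(\bph^*)\chi(\bph^*)\E(\bv):\E(\hq) \dx
+ \int_\Omega \hat{\CCC}(\bph^*)\chi'(\bph^*)\hh\,\E(\bu^*):\E(\hq) \dx
- \int_\Omega \hat{\CCC}'(\bph^*)\hh\,(\E(\hu^*)-\chi(\bph^*)\E(\bu^*)):\E(\hq)\dx.
\end{equation*}
This eliminates $\hat{\bth}$ but still contains $\bv$. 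To remove the latter I would test \eqref{wf:adj:p} with $\bz = \bv$ and the linearised equation \eqref{wf:sys:LIN} with $\bz = \bp$; the major symmetry of $\ov{\CCC}$ then gives
\begin{equation*}
\int_\Omega \hat{\CCC}(\bph^*)\chi(\bph^*)\E(\hq):\E(\bv) \dx
= -\int_\Omega \ov{\CCC}'(\bph^*)\hh\,\E(\bu^*):\E(\bp)\dx.
\end{equation*}

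Substituting this into the previous identity, choosing $\hh = \bphi - \bph^*$, and plugging the result into $D\Jred(\bph^*)(\bphi-\bph^*) \geq 0$ yields exactly \eqref{foc:final}. The only delicate point is bookkeeping: one must consistently identify which adjoint equation to pair with which linearised equation and exploit the symmetry relations on $\ov{\CCC}$ and $\hat{\CCC}$ from \ref{ass:elasticity} so that the bilinear terms in $\bv$ and $\hat{\bth}$ cancel. Once this cancellation is achieved, the remaining contributions depend only on $\bph^*$, $\bu^*$, $\hu^*$, $\bp$, $\hq$ and the test direction $\bphi - \bph^*$, which, combined with the direct differentiation of the Ginzburg--Landau term (recalling that $\Psi = \tilde{\Psi}$ on $\Uad$ thanks to \ref{ass:potential}), delivers the claimed variational inequality.
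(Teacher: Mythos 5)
Your proposal is correct and follows essentially the same route as the paper: derive the variational inequality $\langle D\Jred(\bph^*),\bphi-\bph^*\rangle\geq 0$ from convexity of $\Uad$ and the Fr\'echet differentiability results of Theorems \ref{THM:LIN:S1} and \ref{THM:LIN:S2}, then eliminate $\hat{\bth}$ and $\bv$ by testing \eqref{wf:adj:q} with $\hat{\bth}$, \eqref{wf:sys:hthe} with $\hq$, \eqref{wf:adj:p} with $\bv$, and \eqref{wf:sys:LIN} with $\bp$, using the major symmetry of the elasticity tensors. The two intermediate identities you state are exactly the "short calculation" the paper alludes to, so nothing is missing.
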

\begin{proof}
As $\Uad$ is a non-empty, closed and convex set, standard results in optimal control and convex analysis yield that the first order necessary optimality condition for $\bph^*$ is
\begin{align*}
	\langle D \Jred (\bph^*), \bphi - \bph^* \rangle 
	\geq 
	0 
	\quad 
	\forall \bphi \in \Uad,
\end{align*}
which reads, in view of Theorem \ref{THM:LIN:S1} and Theorem \ref{THM:LIN:S2}, as
\begin{equation}\label{foc:first}
\begin{aligned}
& \int_{\Gamma^{\rm tar}} W(\hu^* - \uu^{\rm tar}) \cdot \hat{\bth} \dH \\
& \quad +2 \gamma \eps \iO{ \nabla \bph^* \cdot \nabla (\bphi - \bph^*) } + \frac{\gamma}{\eps} \iO{ \tilde\Psi_{,\bph}(\bph^*) \cdot (\bphi - \bph^*)} \geq 0 \quad \forall \bph \in \Uad,
\end{aligned}
\end{equation}
where $\hat{\bth}$ is the unique solution to \eqref{wf:sys:hthe} with $\bph = \bph^*$, $\bu = \bu^*$, $\hu = \hu^*$, $\hh = \bphi - \bph^*$ and $\kk = \bv$ from \eqref{wf:sys:LIN} (also with $\hh = \bphi - \bph^*$). To simplify the above relation, the procedure is to compare the equalities obtained from \eqref{wf:sys:LIN} with $\bz = \bp$, \eqref{wf:sys:hthe} with $\kk = \bv$ and $\bz = \hq$, \eqref{wf:adj:q} with $\bz = \hat{\bth}$ and \eqref{wf:adj:p} with $\bz = \bv$.  A short calculation then shows 
\begin{align*}
 \int_{\Gamma^{\rm tar}} W(\hu^* - \uu^{\rm tar}) \cdot \hat{\bth} \dH	& 
	=
	-\int_\Omega  \ov \CCC'(\bph^*) (\bph-\bph^*)\E(\bu^*) : \E(\ov \pp) \dx
	\\ & \quad 	
	- \int_\Omega\hat \CCC'(\bph^*)  (\bph-\bph^*) ( \E (\hu^*)- \chi(\bph^*) \E(\bu^*)): \E (\hat \qq) \dx
	\\ & \quad 
	+ \int_\Omega  \hat \CCC(\bph^*)  \chi'(\bph^*)(\bph-\bph^*)\E(\bu^*): \E (\hat \qq) \dx,
\end{align*}
which allows us to remove the dependence of $\hat \bth$ from \eqref{foc:first} and leads to \eqref{foc:final}.
\end{proof}

\section{Sharp interface asymptotics}\label{SEC:Sharp}
In this section we study the behaviour of solutions in the sharp interface limit $\eps \to 0$. For $\eps >0$, we denote
\begin{align*}
E_\eps(\bph) & = \eps \int_\Omega |\nabla \bph|^2 \dx + \frac{1}{\eps} \int_\Omega \Psi(\bph) \dx, \\
G(\bph) & = \frac{1}{2} \int_{\Gamma^{\rm tar}} W (\S(\bph) - \uu^{\rm tar}) \cdot (\S(\bph) - \uu^{\rm tar}) \dH,
\end{align*}
where $\S$ is the solution operator defined in \eqref{controltostateoperator}, so that the corresponding reduced functional can be expressed as the sum $\Jred^\eps(\bph) = G(\bph) + \gamma E_\eps(\bph)$.  The asymptotic behaviour of solutions can be studied under the framework of $\Gamma$-convergence.  In order to state the result some preparation is needed. A function $\bph \in L^1(\Omega, \RRR^L)$ is termed a function of bounded variation in $\Omega$, written as $\bph \in \BV(\Omega, \RRR^L)$ if there exists a matrix-valued measure $D \bph$ of dimension $L \times d$ on $\Omega$ such that 
\[
\sum_{j=1}^L \int_\Omega \vp_j  (\div {\bs \psi})_j \dx = - \sum_{j=1}^L \sum_{i=1}^d \int_\Omega \psi_i^j d D_i \vp_j,
\]
for all $\bm{\psi} = (\psi_i^j)_{1 \leq i \leq d, 1 \leq j \leq L}$ where $\psi_i^j \in C^1_c(\Omega)$.
Let $T = \{\bm{e}_1, \dots, \bm{e}_L\}$ where $\Psi^{-1}(0) = T$. For $\bph \in \BV(\Omega, T)$ we set, for $i \in \{1,...,L\}$,
\[
E_\bph^i = \{ \x \in \Omega \, : \, \bph(\x) = \bm{e}_i \}
\]
and define the essential boundary $\pd^* E_{\bph}^i$ as
\[
\pd^* E_\bph^i = \Big \{ \x \in \RRR^d \, : \, \lim_{ \rho \to 0^+} \frac{|E_\bph^i \cap B_\rho(\x)|}{|B_\rho(\x)|} \notin \{0,1 \} \Big \}
\]
where, for any $\rho >0$, $B_\rho(\x)$ is the $\rho$-ball in $\RRR^d$ centered in $\x$, i.e., $B_\rho(\x)= \{ \y \in \RRR^d \, : \, |\y-\x| < \rho \}$. Consider the extended functionals
\begin{align*}
\EEE_\eps(\bph) & := \begin{cases}
E_\eps(\bph) & \text{ if } \bph \in H^1(\Omega, \RRR^L), \\
+\infty & \text{ elsewhere in } L^1(\Omega, \RRR^L),
\end{cases} \\
\EEE_0(\bph) & := \begin{cases}
\sum_{i,j=1, \, i < j}^L b_{ij} \mathcal{H}^{d-1} (\Omega \cap \pd^* E_{\bph}^i \cap \pd^* E_{\bph}^j) & \text{ if } \bph \in \BV(\Omega, T), \\
+\infty & \text{ elsewhere in } L^1(\Omega, \RRR^L),
\end{cases}
\end{align*}
with constants $b_{ij}$ defined as
\[
b_{ij} = \inf \Big \{ \int_0^1 \Psi^{1/2}(\bm{\gamma}(t))|\bm{\gamma}'(t)| \dt \, : \, \bm{\gamma} \in C^1([0,1]; \Delta^L), \, \bm{\gamma}(0) = \bm{e}_i, \, \bm{\gamma}(1) = \bm{e}_j \Big \}.
\]
Then, the $\Gamma$-convergence of $\EEE_\eps$ to $\EEE_0$ as $\eps \to 0$ is expressed as the following assertions:
\begin{itemize}
\item {\bf Liminf property.} If $\{\bph^\eps\}_{\eps > 0}$ is a sequence such that $\liminf_{\eps \to 0} \mathbb{E}_\eps(\bph^\eps) < \infty$ and $\bph^\eps \to \bph^0$ in $L^1(\Omega, \RRR^L)$, then $\bph^0 \in \BV(\Omega, T)$ with $\mathbb{E}_0(\bph^0) \leq \liminf_{\eps \to 0} \mathbb{E}_\eps(\bph^\eps)$.
\item {\bf Limsup property.} For any $\bph^0 \in L^1(\Omega, T)$, there exists a sequence $\{\bph^\eps\}_{\eps > 0} \subset H^1(\Omega, \RRR^L)$, such that $\bph^\eps \to \bph^0$ in $L^1(\Omega, \RRR^L)$ and $\limsup_{\eps \to 0} \EEE_\eps(\bph^\eps) \leq \mathbb{E}_0(\bph^0)$.
\item {\bf Compactness property.} Let $\{\bph^\eps\}_{\eps > 0}$ be a sequence such that $\sup_\eps \mathbb{E}_\eps(\bph^\eps) < \infty$. Then, there exists a non-relabelled subsequence and a function $\bph^0 \in \BV(\Omega, T)$ such that $\bph^\eps \to \bph^0$ in $L^1(\Omega, \RRR^L)$. 
\end{itemize}
For a proof we refer to \cite[Thm.~2.5 and Prop.~4.1]{Baldo}, see also \cite[Thm.~3.1 and Rmk.~3.3]{Bellettini} with the choice $f(z, X) = |X|^2$.

\subsection{Convergences of minimisers}
\begin{lem}\label{lem:SI:Gamma}
For each $\eps \in (0,1]$, let $\bph^{\eps} \in \Uad$ denote a minimiser to the extended reduced cost functional $\Jred^\eps(\bph) = G(\bph) + \gamma \mathbb{E}_\eps(\bph)$. Then, there exists a non-relabelled subsequence $\eps \to 0$ and a limit function $\bph_0 \in \BV(\Omega, T)$ such that $\bph^\eps \to \bph_0$ strongly in $L^1(\Omega, \RRR^L)$, $\lim_{\eps \to 0} \Jred^\eps(\bph^\eps) = \Jred^0(\bph_0)$, where 
\begin{align*}
\Jred^0(\bph) =  G(\bph) +\gamma  \mathbb{E}_0(\bph) \text{ for } \bph \in \BV(\Omega, T),
\end{align*}
and $\bph_0$ is a minimiser to $\Jred^0$.
\end{lem}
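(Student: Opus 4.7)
The plan is to apply the standard machinery of $\Gamma$-convergence, using the three properties stated above for $\EEE_\eps \to \EEE_0$, combined with the continuity of the control-to-state operator $\S$ proved in Corollary~\ref{COR:cts}. Since $\bph^\eps \in \Uad$ takes values in $\Delta^L$, we have $\|\bph^\eps\|_{L^\infty(\Omega)} \leq 1$. Comparing $\Jred^\eps(\bph^\eps)$ with $\Jred^\eps(\bphi^{\rm ref})$ for any fixed $\bphi^{\rm ref} \in \Uad \cap H^1(\Omega,\RRR^L)$ and using $G \geq 0$ yields a uniform bound $\sup_{\eps \in (0,1]} \EEE_\eps(\bph^\eps) < \infty$. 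The compactness property then furnishes a non-relabelled subsequence and a limit $\bph_0 \in \BV(\Omega,T)$ with $\bph^\eps \to \bph_0$ strongly in $L^1(\Omega,\RRR^L)$. Since $T \subset \Delta^L$ and $\Delta^L$ is closed and convex, we automatically have $\bph_0(\x) \in \Delta^L$ for a.e.\ $\x \in \Omega$, so $\bph_0$ is admissible for $\Jred^0$.

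For the liminf inequality, I would combine the liminf property of $\EEE_\eps$ with the continuity of $G$ along the sequence. Indeed, the $L^1$-convergence $\bph^\eps \to \bph_0$ together with the uniform bound $\|\bph^\eps\|_{L^\infty(\Omega)} \leq 1$ puts us in the setting of Corollary~\ref{COR:cts}, which gives $\S(\bph^\eps) \to \S(\bph_0)$ strongly in $\hatHD$. Continuity of the boundary trace $H^1(\Omega,\RRR^d) \to L^2(\hat\Gamma_N,\RRR^d)$ then yields strong convergence of the traces on $\Gamma^{\rm tar}$, so $G(\bph^\eps) \to G(\bph_0)$. Thus
\begin{equation*}
\Jred^0(\bph_0) = G(\bph_0) + \gamma \EEE_0(\bph_0) \leq \lim_{\eps \to 0} G(\bph^\eps) + \gamma \liminf_{\eps \to 0} \EEE_\eps(\bph^\eps) \leq \liminf_{\eps \to 0} \Jred^\eps(\bph^\eps).
\end{equation*}

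To establish minimality of $\bph_0$ and the convergence of energies, I would test against an arbitrary competitor $\bphi \in \BV(\Omega,T)$ with $\EEE_0(\bphi) < \infty$ (otherwise the inequality is trivial). The limsup property produces a recovery sequence $\tilde\bph^\eps \in H^1(\Omega,\RRR^L)$ with $\tilde\bph^\eps \to \bphi$ in $L^1(\Omega,\RRR^L)$ and $\limsup_{\eps \to 0} \EEE_\eps(\tilde\bph^\eps) \leq \EEE_0(\bphi)$. A subtle but important point, which I would invoke from the construction in \cite{Baldo} (see also \cite{Bellettini}), is that the recovery sequence may be chosen to take values in $\Delta^L$, so that $\tilde\bph^\eps \in \Uad$. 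Applying Corollary~\ref{COR:cts} once more yields $G(\tilde\bph^\eps) \to G(\bphi)$, and the minimality of $\bph^\eps$ for $\Jred^\eps$ over $\Uad$ gives
\begin{equation*}
\Jred^0(\bph_0) \leq \liminf_{\eps \to 0} \Jred^\eps(\bph^\eps) \leq \limsup_{\eps \to 0} \Jred^\eps(\bph^\eps) \leq \limsup_{\eps \to 0} \Jred^\eps(\tilde\bph^\eps) \leq \Jred^0(\bphi).
\end{equation*}
Since $\bphi \in \BV(\Omega,T)$ was arbitrary, $\bph_0$ minimises $\Jred^0$. Specialising to $\bphi = \bph_0$ pinches all inequalities into equalities, giving $\lim_{\eps \to 0} \Jred^\eps(\bph^\eps) = \Jred^0(\bph_0)$.

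The main obstacle I anticipate is ensuring that the recovery sequence supplied by the $\Gamma$-$\limsup$ can be taken in $\Uad$, i.e.\ with values in the simplex $\Delta^L$; this is a constrained version of the Modica--Mortola construction and is the reason for citing the multiphase results of \cite{Baldo,Bellettini} rather than the scalar classical statement. Everything else is a routine combination of Corollary~\ref{COR:cts} (for the continuity of the state-dependent part $G$) and the $\Gamma$-convergence of the perimeter term.
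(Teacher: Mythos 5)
Your argument is essentially the paper's: uniform energy bound, compactness, continuity of $G$ via Corollary~\ref{COR:cts} and the trace theorem, liminf inequality, recovery sequences for arbitrary competitors, and pinching with $\bphi = \bph_0$ at the end — so the overall structure and conclusion are correct. One small slip in the coercivity step: comparing with \emph{any} fixed $\bphi^{\rm ref} \in \Uad$ does not give $\sup_{\eps}\EEE_\eps(\bph^\eps) < \infty$, because $\Jred^\eps(\bphi^{\rm ref})$ contains the term $\tfrac{1}{\eps}\int_\Omega \tilde{\Psi}(\bphi^{\rm ref})\dx$, which blows up as $\eps \to 0$ unless $\tilde{\Psi}(\bphi^{\rm ref}) = 0$ a.e. You must choose the comparison function with $\eps$-uniformly bounded Ginzburg--Landau energy, e.g.\ the constant function $\bphi^{\rm ref} \equiv \bm{e}_1$ (for which $\EEE_\eps(\bphi^{\rm ref}) = 0$), or, as the paper does, compare against the recovery sequence of an arbitrary $\bm{\psi} \in \BV(\Omega, T)$ and use $\limsup_{\eps\to 0}\Jred^\eps(\bm{\psi}^\eps) \leq \Jred^0(\bm{\psi}) < \infty$; with either fix the rest of your argument goes through unchanged. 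Your worry about the recovery sequence being admissible is legitimate but is resolved automatically in this setting: since $\Psi = \tilde{\Psi} + I_\Delta$, any sequence with $\limsup_{\eps \to 0}\EEE_\eps < \infty$ must take values in $\Delta^L$ a.e.\ (for small $\eps$), and indeed the constructions in \cite{Baldo,Bellettini} produce simplex-valued sequences, so the comparison with the minimality of $\bph^\eps$ over $\Uad$ is justified, just as in the paper.
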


\begin{proof}
The proof relies on the $\Gamma$-convergence of the Ginzburg--Landau functional and the stability of $\Gamma$-convergence under continuous perturbations. By Corollary \ref{COR:cts}, and the continuity of the trace operator, we see that $G$ is continuous.  For arbitrary $\bm{\psi} \in \BV(\Omega, T)$, we invoke the limsup property to find a sequence $\{\bm{\psi}^\eps\}_{\eps > 0}$ such that $\bm{\psi}^\eps \to \bm{\psi}$ strongly in $L^1(\Omega, \RRR^L)$ and $\limsup_{\eps \to 0}  \EEE_\eps(\bm{\psi}^\eps) \leq \EEE_{0}(\bm{\psi}) < \infty$.  Continuity of $G$ implies $G(\bm{\psi}^\eps) \to G(\bm{\psi})$ as $\eps \to 0$, leading to
\[
\limsup_{\eps \to 0} \Jred^\eps(\bm{\psi}^\eps) \leq \Jred^0(\bm{\psi}) < \infty.
\]
As $\bph^\eps$ minimises $\Jred^\eps$, we see that 
\[
\limsup_{\eps \to 0} \Jred^\eps(\bph^\eps) \leq \limsup_{\eps \to 0} \Jred^\eps(\bm{\psi}^\eps) \leq \Jred^0(\bm{\psi}) < \infty .
\]
By the non-negativity of $G$, the above estimate implies $\sup_{\eps \in (0,1]}  \EEE_\eps(\bph^\eps) < \infty$, and by the compactness property we deduce that there exists a limit function $\bph_0 \in \BV(\Omega,T)$ such that $\bph^\eps \to \bph_0$ strongly in $L^1(\Omega, \RRR^L)$ along a non-relabelled subsequence.  Continuity of $G$ then gives $G(\bph^\eps) \to G(\bph_0)$ and invoking the liminf property we subsequently infer that 
\[
\Jred^0(\bph_0) \leq \liminf_{\eps \to 0} \Jred^\eps(\bph^\eps) \leq \Jred^0(\bm{\psi}).
\]
As $\bm{\psi}$ is arbitrary, this shows that $\bph_0$ is a minimiser of $\Jred^0$.  We now return to the beginning of the proof and consider using the limsup property to construct a sequence $\{\bm{\varphi}^\eps\}_{\eps > 0}$ that converges strongly to $\bph_0$ in $L^1(\Omega, \RRR^d)$. Then, following a similar argument we arrive at
\[
\Jred^0(\bph_0) \leq \liminf_{\eps \to 0} \Jred^\eps(\bph^\eps) \leq \limsup_{\eps \to 0} \Jred^\eps(\bph^\eps) \leq \Jred^0(\bph_0),
\] 
which provides the claimed assertion $\lim_{\eps \to 0} \Jred^\eps(\bph^\eps) = \Jred^0(\bph_0)$.
\end{proof}

\subsection{Formally matched asymptotic expansions}
We turn our attention towards the optimality condition \eqref{foc:final} and study its sharp interface limit $\eps \to 0$ with the method of formally matched asymptotic expansions, where we assume the functions $\bph^\eps$, $\bu^\eps$, $\hu^\eps$, $\ov{\pp}^\eps$, and $\hat{\qq}^\eps$ admit asymptotic expansions in powers of $\eps$. From Lemma \ref{lem:SI:Gamma} we saw that $\bph^\eps$ converges to a function $\bph_0 \in \BV(\Omega, T)$ as $\eps \to 0$, and thus for $0 < \eps < 1$, we expect $\bph^\eps$ to change its values rapidly on a length scale proportional to $\eps$. This inspires us to consider two asymptotic expansions of $(\bph^\eps, \bu^\eps, \hu^\eps, \ov{\pp}^\eps, \hat{\qq}^\eps)$ in the bulk and interfacial regions (to be defined below), and the procedure is to match these expansions in an intermediate region to deduce the expected equations in the sharp interface limit. We follow the ideas in \cite{BGFS} that treats a similar system of equations, and refer the reader to, e.g., \cite{Bronsard,Fife,GLNS,GNS} for more details regarding the methodology.

Recalling $T = \Psi^{-1}(0) = \{ \bm{e}_1, \dots, \bm{e}_L\}$ as the set of corners of the Gibbs simplex $\Delta^L$, we partition the domain $\Omega$ into regions $\Omega^i$, $i = 1, \dots, L$, where $\Omega^i = \{\x \in \Omega \, : \, \bph_0(\x) = \bm{e}_i \}$.  Then, we assume the functions $\bph^\eps$, $\bu^\eps$, $\hu^\eps$, $\ov{\pp}^\eps$, and $\hat{\qq}^\eps$ are sufficiently smooth and admit the following asymptotic expansion in $\eps$:
\begin{align*}
& \bph^\eps(\x) = \sum_{k=0}^\infty \eps^k \bph_k(\x), \quad \bu^\eps(\x) = \sum_{k=0}^\infty \eps^k \bu_k(\x), \quad \hu^\eps(\x) = \sum_{k=0}^\infty \eps^k \hu_k(\x), \\
& \ov{\pp}^\eps(\x) = \sum_{k=0}^\infty \eps^k \ov{\pp}_k(\x), \quad \hat{\qq}^\eps(\x) = \sum_{k=0}^\infty \eps^k \hat{\qq}_k(\x),
\end{align*}
for all points $\x \in \Omega$ away from the interfaces $\Gamma_{ij} = \pd \Omega_i \cap \pd \Omega_j$ for $i,j \in \{1, \dots, L\}$, $i \neq j$. This is known as the outer expansion. Furthermore, we assume that 
\[
\bph_k(\x) \in T \Sigma^L := \Big \{ \bm{v} = (v_1, \dots, v_L) \in \RRR^L \, : \, \sum_{i=1}^L v_i = 0 \Big \}, \quad k \geq 1,
\]
where $T \Sigma^L$ is the tangent space of the affine hyperplane $\Sigma^L = \{ \bm{v} \in \RRR^L \, : \, \sum_{i=1}^L v_i = 1\}$, so that by the above construction $\bph^\eps(\x) \in \Delta^L$ for $\eps$ sufficiently small. We assume that there are constant elasticity tensors $\ov{\CCC}_i$ and $\hat{\CCC}_i$ for $i = 1, \dots, L$, satisfying the standard symmetric conditions and are positive definite.  Then, for $\bph = (\vp_1, \dots, \vp_L)$ such that $\bph(\x) \in \Delta^L$, we consider
\[
\ov{\CCC}(\bph) = \sum_{i=1}^L \ov{\CCC}_i \vp_i, \quad \hat{\CCC}(\bph) = \sum_{i=1}^L \hat{\CCC}_i \vp_i.
\]
Then, substituting the outer expansions into the state systems \eqref{bu:sys}, \eqref{hu:sys} and the adjoint systems \eqref{adj:1} and \eqref{adj:2}, to leading order we obtain the following equations for $i = 1, \dots, L$:
\begin{equation}\label{outer:sys1}
\begin{aligned}
& \begin{cases}
-\div \big(\ov{\CCC}_i \E (\bu_0)\big) = \ov{\FF} & \text{ in } \Omega^i,
	\\
 \bu_0 = \bU  & \text{ on } \ov \Gamma_D \cap \pd \Omega^i,	
	\\
(\ov{\CCC}_i \E (\bu_0)) \nn = \ov{\gg} & \text{ on } \ov \Gamma_N \cap \pd \Omega^i,\end{cases}   \\
& \begin{cases}
-\div \big(\hat{\CCC}_i ( \E (\hu_0)- \chi_i \E(\bu_0))\big) = \hat{\FF} & \text{ in } \Omega^i, \\
 \hu_0 = \hU & \text{ on } \hat \Gamma_D \cap \pd \Omega^i,\\
  (\hat{\CCC}_i ( \E (\hu_0)- \chi_i \E(\bu_0))) \nn = \hat{\gg} & \text{ on } \hat \Gamma_N \cap \pd \Omega^i,
\end{cases}
\end{aligned}
\end{equation}
where $\chi_i := \chi(\bm{e}_i)$, and 
\begin{equation}\label{outer:sys2}
\begin{aligned}
& \begin{cases}
-\div \big(\hat \CCC_i \E(\hat \qq_0)\big) = \0 & \text{ in } \Omega^i,
	\\
	\hat \qq_0= \0 & \text{ on } \hat \Gamma_D \cap \pd \Omega^i,	
	\\
 (\hat \CCC_i\E(\hat \qq_0)) \nn = W(\hu_0-\uu^{\rm tar}) \bigchi_{\Gamma^{\rm tar}} & \text{ on } \hat \Gamma_N \cap \pd \Omega^i, 
\end{cases} \\
& \begin{cases}
-\div \big(\ov{\CCC}_i \E (\ov \pp_0) -  \hat{\CCC}_i \chi_i \E(\hat \qq_0))\big) = \0 & \text{ in }\Omega^i,
	\\
	\ov \pp_0 = \0  & \text{ on } \ov \Gamma_D \cap \pd \Omega^i,	
	\\
	(\ov{\CCC}_i \E (\ov \pp_0) - \hat{\CCC}_i \chi_i \E(\hat \qq_0)) \nn = \0 & \text{ on } \ov \Gamma_N \cap \pd \Omega^i.
\end{cases}
\end{aligned}
\end{equation}
It then remains to furnish the above with boundary conditions for $(\bu_0, \hu_0, \ov{\pp}_0, \hat{\qq}_0)$ on the interfaces $\Gamma_{ij}$ for $i,j \in \{1, \dots, L \}$, $i < j$, which we assume are smooth hypersurfaces that can be obtained in the limit $\eps \to 0$.  These boundary conditions can be inferred with the help of a corresponding inner expansion for $(\bph^\eps,\bu^\eps, \hu^\eps, \ov{\pp}^\eps, \hat{\qq}^\eps)$ in the interfacial regions bordering two bulk regions $\Omega^i$ and $\Omega^j$. To this end, we focus on one particular interface $\Gamma_{ij}$ and introduce a change of coordinates with the help of a local parameterisation $\bm{\gamma}: U \subset \RRR^{d-1} \to \RRR^d$ of $\Gamma_{ij}$, where $U$ is a spatial parameter domain.

Close to $\bm{\gamma}(U)$, consider the signed distance function $d$ such that $d(\x) > 0$ if $\x \in \Omega^j$ and $d(\x) <0$ if $\x \in \Omega^i$, so that the unit normal $\bm{\nu}$ to $\Gamma_{ij}$ points from $\Omega^i$ to $\Omega^j$. Introducing the rescaled signed distance $z = \frac{d}{\eps}$, a local parameterization of $\x \in \RRR^d$ close to ${\bm \gamma}(U)$ can be given as
\[
\x = {\cal G}^\eps(\s, z) = \bm{\gamma}(\s) + \eps z \bm{\nu}(\s), \quad \s \in U \subset \RRR^{d-1}, \, z \in \RRR.
\]
This representation allows us to infer the following expansions for gradients, divergences and Laplacians \cite{GStin}:
\begin{align*}
\nabla_{x} b & = \frac{1}{\eps} \pd_z \hat b \bm{\nu} + \nabla_{\Gamma_{ij}} \hat b + \mathcal{O}(\eps), \quad \div_{x} \bm{j} = \frac{1}{\eps} \pd_z \hat{\bm{j}} \cdot \bm{\nu} + \div_{\Gamma_{ij}} \hat{\bm{j}} + \mathcal{O}(\eps), \\
\nabla_{x} \bm{j} & = \frac{1}{\eps} \pd_z \hat{\bm{j}} \otimes \bm{\nu} + \nabla_{\Gamma_{ij}} \hat{\bm{j}} + \mathcal{O}(\eps), \quad \Delta_{x} b = \frac{1}{\eps^2} \pd_{zz} \hat b - \frac{1}{\eps} \kappa_{\Gamma_{ij}} \pd_z \hat b + \mathcal{O}(1),
\end{align*}
for scalar functions $b(\x) = \hat{b}(\s(\x), z(\x))$ and vector functions $\bm{j}(\x) =\hat{\bm{j}}(\s(\x), z(\x))$, along with the curvature $\kappa_{\Gamma_{ij}}$ of $\Gamma_{ij}$, the surface gradient operator $\nabla_{\Gamma_{ij}}$, and the surface divergence operator $\div_{\Gamma_{ij}}$ on $\Gamma_{ij}$. Then, for points close by $\Gamma_{ij}$, we assume an inner expansion of the form
\begin{align*}
& \bph^\eps(\x) = \sum_{k=0}^\infty \eps^k \bm{\Phi}_k(\s,z), \quad \bu^\eps(\x) = \sum_{k=0}^\infty \eps^k \bU_k(\s,z), \quad \hu^\eps(\x) = \sum_{k=0}^\infty \eps^k \hU_k(\s,z), \\
& \ov{\pp}^\eps(\x) = \sum_{k=0}^\infty \eps^k \bP_k(\s,z), \quad \hat{\qq}^\eps(\x) = \sum_{k=0}^\infty \eps^k \hQ_k(\s,z).
\end{align*}
Lastly, we assume in a tubular neighborhood of $\Gamma_{ij}$ the outer expansions and the inner expansions hold simultaneously. Within this intermediate region certain matching conditions relating the outer expansions to the inner expansions must hold.  For a scalar function $b(\x)$ admitting an outer expansion $\sum_{k=0}^\infty \eps^k b_k(\x)$ and an inner expansion $\sum_{k=0}^\infty \eps^k B_k(\s,z)$, it holds that (see \cite[Appendix D]{GStin})
\begin{align*}
B_0(\s,z) & \to \begin{cases}
\lim_{\delta \searrow 0} b_0(\x + \delta \bm{\nu}(\x)) =: b_0^+(\x) & \text{ for } z \to +\infty, \\
\lim_{\delta \searrow 0} b_0(\x - \delta \bm{\nu}(\x)) =: b_0^-(\x) & \text{ for } z \to -\infty,
\end{cases} \\
 \pd_z B_0(\s,z) & \to 0 \text{ as } z \to \pm\infty, \\
\pd_z B_1(\s,z) & \to \begin{cases}
\lim_{\delta \searrow 0} (\nabla b_0)(\x + \delta \bm{\nu}(\x)) \cdot \bm{\nu}(\x) =: \nabla b_0^+ \cdot \bm{\nu} & \text{ for } z \to +\infty, \\
\lim_{\delta \searrow 0} (\nabla b_0)(\x - \delta \bm{\nu}(\x)) \cdot \bm{\nu}(\x) =: \nabla b_0^- \cdot \bm{\nu}& \text{ for } z \to -\infty,
\end{cases}
\end{align*}
for $\x \in \Gamma_{ij}$. Consequently, we denote the jump of a quantity $b$ across $\Gamma_{ij}$ as 
\[
[b]_{-}^{+} := \lim_{\delta \searrow 0} b(\x + \delta \bm{\nu}(\x)) - \lim_{\delta \searrow 0} b(\x - \delta \bm{\nu}(\x) )  \quad \text{ for } \x \in \Gamma_{ij}.
\]
Note that the above matching conditions also apply to vectorial functions.  We introduce the orthogonal projection
\[
\bm{P}_{T \Sigma} : \RRR^L \to T \Sigma^L, \quad \bm{P}_{T \Sigma} \bph = \bph - \Big ( \frac{1}{L} \sum_{i=1}^L \vp_i \Big ) \bm{1}
\]
where $\bm{1} := (1, \dots, 1)^{\top}$, so that the optimality condition \eqref{foc:final} can be expressed in the following strong form
\begin{equation}\label{Opt:strong}
\begin{aligned}
& - 2 \gamma \eps \Delta \bph^\eps + \bm{P}_{T \Sigma} \Big ( \frac{\gamma}{\eps}\tilde{\Psi}_{,\bph} (\bph^\eps) + \hat{\CCC}(\bph^\eps) \chi'(\bph^\eps) \E(\bu^\eps) : \E(\hat{\qq}^\eps) \Big )\\
& \quad -   \bm{P}_{T \Sigma} \Big ( \hat{\CCC}'(\bph^\eps) (\E(\hu^\eps) - \chi(\bph^\eps) \E(\bu^\eps)) : \E(\hat{\qq}^\eps) + \ov{\CCC}'(\bph^\eps) \E(\bu^\eps): \E(\bp^\eps)  \Big )= \bm{0}.
\end{aligned}
\end{equation}
We substitute the inner expansions into the equations \eqref{SYS:1}, \eqref{SYS:2}, \eqref{ADJ:SECOND:1}, \eqref{ADJ:FIRST:1}, and \eqref{Opt:strong} and collect terms of the same order. Then, we perform some computations to deduce the boundary conditions posed on $\Gamma_{ij}$. As the subsequent analysis is similar to that performed in \cite{BGFS,GLNS}, we omit most of the straightforward details. In the sequel we use the notation 
\[
(\bm{B})^{\mathrm{sym}} = \frac{1}{2}(\bm{B} + \bm{B}^{\top}), \quad \E_\XX := (\pd_z \XX_1 \otimes \bm{\nu} + \nabla_{\Gamma_{ij}} \XX_0)^{\mathrm{sym}},
\] 
for second order tensors $\bm{B}$ and for $\XX \in \{\bU, \hU, \bP, \hQ\}$.

To leading order $\mathcal{O}(\frac{1}{\eps^2})$, equations \eqref{SYS:1} and \eqref{ADJ:SECOND:1} yield
\[
\pd_z \Big (\ov{\CCC}(\bm{\Phi}_0) ( \pd_z \bU_0 \otimes \bm{\nu})^{\mathrm{sym}} \bm{\nu} \Big ) = \bm{0}, \quad \pd_z \Big (\hat{\CCC}(\bm{\Phi}_0) ( \pd_z \hQ_0 \otimes \bm{\nu})^{\mathrm{sym}} \bm{\nu} \Big ) = \bm{0}.
\]
Multiplying by $\bU_0$ and $\hQ_0$, respectively, integrating over $z \in \RRR$, integrating by parts and applying the matching conditions allow us to deduce that $\pd_z \bU_0 = \pd_z \hQ_0 = \bm{0}$, and hence both $\bU_0$ and $\hQ_0$ are constant in $z$.  Applying matching conditions we infer that
\[
[\bu_0]_{i}^j = [\hq_0]_{i}^j = \bm{0} \quad \text{ on } \Gamma_{ij}.
\]
Then, to leading order $\mathcal{O}(\frac{1}{\eps^2})$, equations \eqref{SYS:2} and \eqref{ADJ:FIRST:1} yield
\[
\pd_z \Big (\hat{\CCC}(\bm{\Phi}_0) ( \pd_z \hU_0 \otimes \bm{\nu})^{\mathrm{sym}} \bm{\nu} \Big ) = \bm{0}, \quad \pd_z \Big (\ov{\CCC}(\bm{\Phi}_0) ( \pd_z \bP_0 \otimes \bm{\nu})^{\mathrm{sym}} \bm{\nu} \Big ) = \bm{0},
\]
on account of the fact that $\pd_z \bU_0 = \pd_z \hQ_0 = \bm{0}$.  Hence, we also obtain
\[
[\hu_0]_{i}^j = [\bp_0]_{i}^j = \bm{0} \quad \text{ on } \Gamma_{ij}.
\]
To first order $\mathcal{O}(\frac{1}{\eps})$, we get from \eqref{SYS:1} and \eqref{ADJ:SECOND:1} that
\begin{align}\label{SI:bUhQ}
\pd_z \Big ( \ov{\CCC}(\bm{\Phi}_0) \E_{\bU} \bm{\nu} \Big )  = \bm{0}, \quad
\pd_z \Big ( \hat{\CCC}(\bm{\Phi}_0) \E_{\hQ}\bm{\nu} \Big )  = \bm{0}.
\end{align}
Integrating over $z \in \RRR$ and using the matching conditions yields
\[
[\ov{\CCC} \E(\bu_0) \bm{\nu}]_{i}^j = \bm{0}, \quad [\hat{\CCC} \E(\hq_0) \bm{\nu}]_i^j = \bm{0} \quad \text{ on } \Gamma_{ij}.
\]
Similarly, from equations \eqref{SYS:2} and \eqref{ADJ:FIRST:1}, we obtain to first order $\mathcal{O}(\frac{1}{\eps})$ that
\begin{align}\label{SI:hUbP}
 \pd_z \Big ( \hat{\CCC}(\bm{\Phi}_0) (\E_{\hU} - \chi(\bm{\Phi_0}) \E_{\bU}) \bm{\nu} \Big )  = \bm{0}, \quad \pd_z \Big ( \ov{\CCC}(\bm{\Phi}_0) \E_{\bP} \bm{\nu} - \chi(\bm{\Phi}_0) \hat{\CCC}(\bm{\Phi}_0) \E_{\hQ}\bm{\nu} \Big )  = \bm{0}.
\end{align}
Integrating over $z \in \RRR$ and applying the matching conditions, we obtain
\[
[\hat{\CCC} (\E(\hu_0) - \chi(\bph_0) \E(\bu_0))]_{i}^j \bm{\nu} = \bm{0}, \quad [\ov{\CCC} \E(\bp_0) - \chi(\bph_0) \hat{\CCC} \E(\hq_0)]_i^j \bm{\nu} = \bm{0} \quad \text{ on } \Gamma_{ij}.
\]
Turning now to the optimality condition \eqref{Opt:strong}, we use the fact that $\pd_z \bU_0 = \pd_z \hU_0 = \pd_z \bP_0 = \pd_z \hQ_0 = \bm{0}$ to see that the elasticity terms do not contribute to leading order $\mathcal{O}(\frac{1}{\eps^2})$ and first order $\mathcal{O}(\frac{1}{\eps})$. Hence, to first order $\mathcal{O}(\frac{1}{\eps})$ we obtain from \eqref{Opt:strong} that
\begin{align*}
2 \pd_{zz} \bm{\Phi}_0 - \bm{P}_{T \Sigma} \Big ( \tilde{\Psi}_{,\bph} (\bm{\Phi}_0)  \Big ) = \bm{0}.
\end{align*}
Following \cite{Bronsard}, $\bm{\Phi}_0$ can be chosen independent of $s$ and as the solution to the above ordinary differential system such that $\lim_{z \to - \infty} \bm{\Phi}_0(z) = \bm{e}_i$ and $\lim_{z \to +\infty} \bm{\Phi}_0(z) = \bm{e}_j$. To the next order $\mathcal{O}(1)$, we obtain \begin{equation}\label{SI:Phi1}
\begin{aligned}
& - 2 \gamma \pd_{zz} \bm{\Phi}_1 + \gamma   \bm{P}_{T \Sigma} \Big (  \tilde{\Psi}_{,\bph\bph}(\bm{\Phi}_0) \bm{\Phi}_1 \Big ) + 2 \gamma \kappa_{\Gamma_{ij}} \pd_z \bm{\Phi}_0 +  \bm{P}_{T \Sigma} \Big ( \hat{\CCC}(\bm{\Phi}_0) \chi'(\bm{\Phi}_0) \E_{\bU}: \E_{\hQ}  \Big ) \\
& \quad -  \bm{P}_{T \Sigma} \Big (  \hat{\CCC}'(\bm{\Phi}_0) (\E_{\hU} -\chi(\bm{\Phi}_0) \E_{\bU}) : \E_{\hQ} + \ov{\CCC}'(\bm{\Phi}_0) \E_{\bU} : \E_{\bP} \Big ) = \bm{0},
\end{aligned}
\end{equation}
where $\tilde\Psi_{,\bph\bph}$ denotes the Hessian matrix of $\tilde\Psi$, and we have used that $\pd_z \bm{\Phi}_0 \in T \Sigma^L$. Note that by the fact that $\pd_z \XX_0 = \bm{0}$ for $\XX \in \{\bU, \hU, \bP, \hQ\}$, and by the symmetry of the elasticity tensors $\CCC_{ijkl} = \CCC_{jikl}$, we have the relations
\begin{align}\label{Identities:SI}
\pd_z \E_\XX = (\pd_{zz} \XX_1 \otimes \bm{\nu})^{\mathrm{sym}}, \quad \CCC \E_\YY : \pd_z \E_\XX = (( \CCC \E_\YY) \bm{\nu}) \cdot \pd_{zz} \XX_1,
\end{align}
for any $\XX,\YY \in \{\bU, \hU, \bP, \hQ\}$. To obtain a solution $\bm{\Phi}_1$, a solvability condition has to hold, which can be derived by multiplying \eqref{SI:Phi1} with $\pd_z \bm{\Phi}_0$ and integrating over $z$. Using the relations $\bm{P}_{T \Sigma} (\pd_z \bm{\Phi}_0) = \pd_z \bm{\Phi}_0$, $\bm{P}_{T \Sigma}(\pd_z \bm{\Phi}_1) = \pd_z \bm{\Phi}_1$, after integrating by parts and applying the matching conditions, we obtain
\begin{equation}\label{SI:Phi1:a}
\begin{aligned}
0 & = \int_{-\infty}^\infty \gamma \underbrace{(-2 \pd_{zz} \bm{\Phi}_0 + \bm{P}_{T \Sigma} \Big ( \tilde{\Psi}_{,\bph}(\bm{\Phi}_0)) \Big )}_{=\bm{0}} \cdot \, \pd_z \bm{\Phi}_1 dz + 2\gamma \kappa_{\Gamma_{ij}} \int_{-\infty}^\infty |\pd_z \bm{\Phi}_0|^2 dz \\
& \quad + \int_{-\infty}^\infty \pd_z[ \hat{\CCC}(\bm{\Phi}_0) \chi(\bm{\Phi}_0)] \E_{\bU} : \E_{\hQ} - \pd_z \hat{\CCC}(\bm{\Phi}_0) \E_{\hU} : \E_{\hQ}  - \pd_z \ov{\CCC}(\bm{\Phi}_0) \E_{\bU} : \E_{\bP} \; dz.
\end{aligned}
\end{equation}
We employ the identities obtained from \eqref{SI:bUhQ}, \eqref{SI:hUbP} and \eqref{Identities:SI} to obtain that
\begin{align*}
-\hat{\CCC}(\bm{\Phi}_0) \E_{\hU} : \pd_z \E_{\hQ} + \hat{\CCC}(\bm{\Phi}_0) \E_{\hU} \bm{\nu} \cdot \pd_{zz} \hQ_1 = \bm{0}, \quad \pd_z(\hat{\CCC}(\bm{\Phi}_0) \E_{\hQ} \bm{\nu})  = \bm{0},
\end{align*}
as well as $\pd_z \bm{\nu} = \bm{0}$ to see that
\begin{align*}
& - \pd_z \hat{\CCC}(\bm{\Phi}_0) \E_{\hU} : \E_{\hQ} \\
& \quad = - \pd_z \hat{\CCC}(\bm{\Phi}_0) \E_{\hU} : \E_{\hQ} - \hat{\CCC}(\bm{\Phi}_0) \E_{\hU} : \pd_z \E_{\hQ} - \hat{\CCC}(\bm{\Phi}_0) \E_{\hQ} : \pd_z \E_{\hU}  \\
& \qquad + \hat{\CCC}(\bm{\Phi}_0) \E_{\hU} \bm{\nu} \cdot \pd_{zz} \hQ_1 + \hat{\CCC}(\bm{\Phi}_0) \E_{\hQ} \bm{\nu} \cdot \pd_{zz} \hU_1\\
& \qquad + \pd_z \Big ( \hat{\CCC}(\bm{\Phi}_0) \E_{\hQ} \Big ) \bm{\nu} \cdot \pd_z \hU_1 + \pd_z \Big (\hat{\CCC}(\bm{\Phi}_0) (\E_{\hU} - \chi(\bm{\Phi}_0) \E_{\bU}) \Big ) \bm{\nu} \cdot \pd_z \hQ_1 \\
& \quad = - \pd_z \Big ( \hat{\CCC}(\bm{\Phi_0})  \E_{\hU} : \E_{\hQ} - \hat{\CCC}(\bm{\Phi_0}) \E_{\hQ}  \bm{\nu} \cdot \pd_z \hU_1  - \hat{\CCC}(\bm{\Phi_0})  \E_{\hU}  \bm{\nu} \cdot  \pd_z \hQ_1 \big) \Big )  \\
& \qquad - \pd_z \Big ( \hat{\CCC}(\bm{\Phi}_0) \chi(\bm{\Phi}_0) \E_{\bU} \Big ) \bm{\nu} \cdot \pd_z \hQ_1.
\end{align*}
Via a similar calculation we infer
\begin{align*}
& - \pd_z \ov{\CCC}(\bm{\Phi}_0) \E_{\bU} : \E_{\bP}\\
& \quad  = - \pd_z \Big ( \ov{\CCC}(\bm{\Phi}_0) \E_{\bU} : \E_{\bP} - \ov{\CCC}(\bm{\Phi}_0) \E_{\bP} \bm{\nu} \cdot \pd_z \bU_1 - \ov{\CCC}(\bm{\Phi}_0) \E_{\bU} \bm{\nu} \cdot \pd_z \bP_1 \Big ) \\
& \qquad - \pd_z \Big ( \hat{\CCC}(\bm{\Phi}_0) \chi(\bm{\Phi}_0) \E_{\hQ} \Big ) \bm{\nu} \cdot \pd_z \bU_1, \\
& \pd_z \big [ \hat{\CCC}(\bm{\Phi}_0) \chi(\bm{\Phi}_0) \big] \E_{\bU} : \E_{\hQ} \\
& \quad = \pd_z \Big ( \chi(\bm{\Phi}_0) \hat{\CCC}(\bm{\Phi}_0) \E_{\bU} : \E_{\hQ} \Big ) \\
& \qquad - \chi(\bm{\Phi}_0) \hat{\CCC}(\bm{\Phi}_0) \E_{\bU} \bm{\nu} \cdot \pd_{zz} \hQ_1 - \chi(\bm{\Phi}_0) \hat{\CCC}(\bm{\Phi}_0) \E_{\hQ} \bm{\nu} \cdot \pd_{zz} \bU_1, 
\end{align*}
and thus, setting $b_{ij} := \int_{-\infty}^{\infty} 2|\pd_z \bm{\Phi}_0|^2 dz$, and applying matching conditions for $\E_\XX$ and $\pd_z \XX_1$ with $\XX \in \{\bU, \hU, \bP, \hQ\}$, we obtain from \eqref{SI:Phi1:a} the solvability condition
\begin{equation}\label{SI:Phi1:b}
\begin{aligned}
0 & =  \gamma \kappa_{\Gamma_{ij}}b_{ij} + [\chi(\bph_0) \hat{\CCC} \E(\bu_0) : \E(\hq_0)]_i^j - [\hat{\CCC}\E(\hu_0) : \E(\hq_0)]_i^j - [\ov{\CCC} \E(\bu_0) : \E(\bp_0) ]_i^j \\
& \quad + [\hat{\CCC} \E(\hq_0) \bm{\nu} \cdot (\nabla \hu_0) \bm{\nu} + \hat{\CCC} \E(\hu_0) \bm{\nu} \cdot (\nabla \hq_0) \bm{\nu}]_i^j - [\chi(\bph_0)\hat{\CCC} \E(\bu_0) \bm{\nu} \cdot (\nabla \hq_0) \bm{\nu}]_i^j \\
& \quad + [\ov{\CCC} \E(\bp_0) \bm{\nu} \cdot (\nabla \bu_0) \bm{\nu} + \ov{\CCC} \E(\bu_0) \bm{\nu} \cdot (\nabla \bp_0) \bm{\nu}]_i^j - [\chi(\bph_0) \hat{\CCC} \E(\hq_0) \bm{\nu} \cdot (\nabla \bu_0) \bm{\nu}]_i^j
\end{aligned}
\end{equation}
that has to hold on $\Gamma_{ij}$. Thus, the sharp interface limit consists of the equations \eqref{outer:sys1} and \eqref{outer:sys2} posed in $\Omega^i$, $1 \leq i \leq L$, furnished by the boundary conditions \eqref{SI:Phi1:b} and 
\begin{align*}
& [\hu_0]_i^j = \bm{0}, \quad [\bp_0]_i^j = \bm{0}, \quad [\bu_0]_i^j =  \bm{0}, \quad [\hq_0]_i^j = \bm{0}, \\
& [\ov{\CCC} \E(\bu_0) \bm{\nu}]_{i}^j = \bm{0}, \quad [\hat{\CCC} \E(\hq_0)) \bm{\nu}]_i^j = \bm{0}, \quad  [\ov{\CCC} \E(\bu_0) \bm{\nu}]_{i}^j = \bm{0}, \quad [\hat{\CCC} \E(\hq_0)) \bm{\nu}]_i^j = \bm{0} 
\end{align*}
on $\Gamma_{ij}$, $1 \leq i < j \leq L$.

\begin{remark}
It is possible to consider the sharp interface limit near a triple junction where three regions meet.  We refer to \cite{BGFS,Bronsard,Owen} for more details regarding the asymptotic analysis around a triple junction.
\end{remark}

\subsection{Rigorous convergence in the two-phase setting}
In the two phase case $L = 2$, since $\bph = (\vp_1, \vp_2) \in \Delta^2$, we may use the difference $\vp := \vp_2 - \vp_1 \in [-1,1]$ to encode the vector $\bph = (\frac{1}{2}(1-\vp), \frac{1}{2}(1+\vp))$, so that $|\nabla \bph|^2 = \frac{1}{2} |\nabla \vp|^2$.  
Hence, the problem {\bf (P)} can be re-phrased in terms of the scalar function $\vp$ ranging between $-1$ and $1$, and it suffices to consider the following
\begin{align*}
(\bm  P^\eps) \quad \min_{\vp \in \Uad} \Jred^\eps(\vp) & =  \frac{1}{2}\int_{\Gamma^{\mathrm{tar}}} W(\S(\vp) - \bm{u}^{\mathrm{tar}}) \cdot (\S(\vp) - \uu^{\rm tar}) \dH \\
& \quad + \gamma \int_\Omega \frac{\eps}{2} |\nabla \vp|^2 + \frac{1}{\eps} 
\Psi(\vp) \dx,
\end{align*}
where $\Uad = \{ f \in H^1(\Omega) \, : \, f \in [-1,1] \text{ a.e.~in } \Omega \}$, 
and, as no confusion can arise, we use the short-hand notations 
$\S(\vp)$ and $\Psi(\vp)$ for the functions $\S(\bph)$ and $\Psi(\bph)$ 
evaluated at $\bph = (\frac{1}{2}(1-\vp), \frac{1}{2}(1+\vp))$.
On recalling \ref{ass:potential}, we hence assume that
\begin{equation} \label{eq:obs}
\Psi(s) = \widetilde\Psi(s) + I_{[-1,1]}(s) \text{ for }  s \in \RRR,
\end{equation}
for a $\widetilde\Psi \in C^{1,1}(\RRR)$.
To simplify the calculations, we consider $\bU = \hU = \bm{0}$ (homogeneous Dirichlet data), $\ov{\FF}, \hat{\FF} \in H^1(\Omega,\RRR^d)$, $\ov{\gg} \in H^2(\ov \Gamma_N,\RRR^d)$ and $\hat{\gg} \in H^2(\hat \Gamma_N,\RRR^d)$. We now consider deriving an alternative set of optimal conditions for a minimiser $\vp^* \in \Uad$ based on geometric variations.  To this end, we consider the following admissible transformations and their corresponding velocity fields.
\begin{defn}
The space $\Vad$ of admissible velocity fields is defined as the set of all $\VV \in C^0([-\tau, \tau] \times \ov{\Omega}, \RRR^d)$, where $\tau > 0$ is a fixed small constant, such that it holds:
\begin{itemize}
\item $\VV(t,\cdot) \in C^2(\ov{\Omega}, \RRR^d)$, and $\exists C > 0$ such that $\| \VV(\cdot, \y) - \VV(\cdot, \x) \|_{C^0([-\tau,\tau], \RRR^d)} \leq C\|\x- \y\|$ for all $\x, \y \in \ov{\Omega}$;
\item $\VV(t,\x) \cdot \nn(\x) = 0$ for all $\x \in \pd \Omega$;
\item $\VV(t,\x) = \bm{0}$ for all $\x \in \ov{\Gamma}_D \cup \hat{\Gamma}_D$.
\end{itemize} 
Then, the space $\Tad$ of admissible transformations is defined as the set of solutions to the ordinary differential equations
\[
\pd_t T_t(\x) = \VV(t, T_t(\x)), \quad T_0(\x) = \x
\]
with $\VV \in \Vad$.
\end{defn}
Notice that by the second property it holds $\ov{T_t(\Omega)} = \ov{\Omega}$ for all $t \in [-\tau, \tau]$. Let $\VV \in \Vad$ be an admissible velocity field with corresponding transformation $T \in \Tad$.  For $\vp \in \Uad$ we define $\vp^t := \vp \circ T_t^{-1}$, along with the unique solutions $(\bu^t, \hu^t) \in \ovHD \times \hatHD$, where $\bu^t = \S_1^2(\vp^t)$ and $\hu^t = \S(\vp^t)$. 

Setting $(\vp_0, \bu_0, \hu_0) = (\vp, \bu, \hu)$, by following a similar proof to \cite[Lem.~25]{BGHR}, we define for $\tau_0 > 0$ sufficiently small the function ${\bm F}_1: (-\tau_0, \tau_0) \times \ovHD \to (\ovHD)^*$ by
\begin{align*}
{\bm F}_1(t, \uu)[\vv] & = \iO { \big (\ov{\CCC}(\vp) ( \nabla T_t^{-1} \nabla \uu )^{\rm sym} : ( \nabla T_t^{-1} \nabla \vv)^{\rm sym} - ((\ov{\FF} \circ T_t) \cdot \vv) \big ) \det (\nabla T_t) } \\
& \quad -\int_{\ov{\Gamma}_N} ((\ov{\gg} \circ T_t) \cdot \vv \det (\nabla T_t) \| \nabla T_t^{-\top} \nn \| \dH.
\end{align*}
Using a change of variables $y = T_t(\x)$, the relation
\[
\nabla T_t^{-1} \nabla (\tilde{\uu} \circ T_t) = (\nabla \tilde{ \uu} ) \circ T_t \quad \text{ for } \tilde{\uu} : T_t(\Omega) \to \RRR^d,
\]
and also \cite[Prop.~2.47]{Soko} for the boundary transformation, we obtain
\begin{align*}
{\bm F}_1(t, \uu)[\vv] & = \int_{T_t(\Omega)} \ov{\CCC}(\vp \circ T_t^{-1}) \E_y(\uu \circ T_t^{-1}) : \E_y(\vv \circ T_t^{-1}) \dy \\
& \quad - \int_{T_t(\Omega)} \ov{\FF} \cdot (\vv \circ T_t^{-1}) \dy - \int_{T_t(\ov{\Gamma}_N)} \ov{\gg} \cdot (\vv \circ T_t^{-1} )\dH_y,
\end{align*}
where $\E_y(\uu) = \frac{1}{2}(\nabla_y \uu + (\nabla_y \uu)^{\top})$ for $\uu : T_t(\Omega) \to \RRR^d$ and $\dH_y$ denotes the $(d-1)$-dimensional Hausdorff measure related to $\y$. From the properties of the mapping $T_t$, we find that $T_t(\ov{\Gamma}_N) = \ov{\Gamma}_N$ and $\tilde{\vv} := \vv \circ T_t^{-1} \in \ov{H}^1_D(T_t(\Omega), \RRR^d)$.  Hence, from the above identity we observe that 
\[
{\bm F}_1(t, \bu^t \circ T_t)[\vv] = \int_{\Omega} \ov{\CCC}(\vp^t) \E_y(\bu^t) : \E_y(\tilde{\vv}) - \ov{\FF} \cdot \tilde{\vv} \dy - \int_{\ov{\Gamma}_N} \ov{\gg} \cdot \tilde{\vv} \dH_y = 0
\]
for all $\vv \in \ovHD$.  Denoting by $D_\uu {\bm F}_1$ as the partial derivative of ${\bm F}_1$ with respect to its second argument, we find that $D_\uu {\bm F}_1 (0, \uu) : \ovHD \to (\ovHD)^*$ is given by 
\[
\Big (D_\uu {\bm F}_1(0,\uu)[\vv] \Big )[\ww] = \int_\Omega \ov{\CCC}(\vp) \E(\vv) : \E(\ww) \dx 
\]
for all $\vv, \ww \in \ovHD$, where we have used the relation $\nabla T_t^{-1} \vert_{t=0} = \mathbb{I}$ the identity matrix.  As $D_\uu {\bm F}_1(0,\uu)$ is an isomorphism by the Lax--Milgram theorem, the application of the implicit function theorem allows us to infer that the mapping
\[
t \mapsto (\bu^t \circ T_t)
\]
is differentiable at $t = 0$ with derivative $\dot{\bu}[\VV] := \pd_t \vert_{t=0} (\bu^t \circ T_t) \in \ovHD$ being the unique solution to the distributional equation
\[
D_\uu {\bm F}_1(0, \bu) \big [ \dot{\bu}[\VV] \big ]= - \pd_t {\bm F}_1(0,\bu) \text{ in } (\ovHD)^*,
\]
which reads as
\begin{equation}\label{bu:shape}
\begin{aligned}
& \int_\Omega \ov{\CCC}(\vp) \E(\dot{\bu}[\VV]) : \E(\bz) \dx = \int_\Omega  \ov{\CCC}(\vp) (\nabla \VV(0) \nabla \bu)^{\mathrm{sym}} : \E(\bz)  \dx \\
& \quad + \int_\Omega \ov{\CCC}(\vp) \E(\bu) : (\nabla \VV(0) \nabla \bz)^{\mathrm{sym}} - \ov{\CCC}(\vp) \E(\bu) : \E(\bz) \div \VV(0) \dx \\
& \quad +
\int_\Omega \Big ( \nabla \ov{\FF} \VV(0) + \ov{\FF} \div (\VV(0)) \Big ) \cdot \bz \dx \\
& \quad + \int_{\ov{\Gamma}_N} \Big (\nabla \ov{\gg} \VV(0) + \ov{\gg} \Big ( \div(\VV(0)) - \nn \cdot \nabla \VV(0) \nn \Big ) \Big ) \cdot \bz  \dH
\end{aligned}
\end{equation}
for all $\bz \in \ovHD$. In the above, we have made use of the following relations (see \cite[Lem.~2.31, Prop.~2.36, Lem.~2.49]{Soko})
\begin{align*}
\pd_t \nabla T_t \vert_{t=0} = \nabla \VV(0),  \quad \pd_t \nabla T_t^{-1} \vert_{t=0} = - \nabla \VV(0), \\
\pd_t \det \nabla T_t \vert_{t=0} = \div \VV(0), \quad \pd_t (f \circ T_t) \vert_{t=0} = \nabla f \cdot \VV(0), \\
 \pd_t (\det (\nabla T_t) \| (\nabla T_t)^{-1} \cdot \nn \|) \vert_{t=0} = \div \VV(0)  - \nn \cdot \nabla \VV(0) \nn.
\end{align*}
Furthermore, substituting $\bz = \dot{\bu}[\VV]$ into \eqref{bu:shape}, by means of Korn's inequality and the smoothness of $\VV(0)$, we obtain the estimate
\begin{align}\label{bu:shape:est}
\norma{\dot{\bu}[\VV]}_{H^1(\Omega)} \leq C \Big ( \| \bu \|_{H^1(\Omega)} + \| \ov{\FF} \|_{H^1(\Omega)} + \| \ov{\gg} \|_{H^2(\ov \Gamma_N)} \Big ).
\end{align}
Via a similar procedure, for a small $\tau_0>0$, we consider ${\bm F}_2: (-\tau_0, \tau_0) \times \hatHD \to (\hatHD)^*$ defined as
\begin{align*}
{\bm F}_2(t,\uu )[\vv] & = \iO { \big (\hat{\CCC}(\vp) ( \nabla T_t^{-1} \nabla \uu )^{\rm sym} : ( \nabla T_t^{-1} \nabla \vv)^{\rm sym} - ((\hat{\FF} \circ T_t) \cdot \vv) \big ) \det (\nabla T_t) } \\
& \quad -\int_{\hat{\Gamma}_N} ((\hat{\gg} \circ T_t) \cdot \vv \det (\nabla T_t) \| \nabla T_t^{-\top} \nn \| \dH \\
& \quad - \iO {\chi(\vp) \hat{\CCC}(\vp) ( \nabla T_t^{-1} \nabla( \bu^t \circ T_t) )^{\rm sym} : ( \nabla T_t^{-1} \nabla \vv)^{\rm sym}  \det (\nabla T_t) }.
\end{align*}
Then, by a change of variables $\y = T_t(\x)$, we find that 
\begin{align*}
{\bm F}_2(t, \hu^t \circ T_t)[\vv] & = \int_{\Omega} \hat{\CCC}(\vp^t) \E_y(\hu^t) : \E_y(\tilde{\vv}) - \hat{\FF} \cdot \tilde{\vv} \dy - \int_{\hat{\Gamma}_N} \hat{\gg} \cdot \tilde{\vv} \dH_y \\
& \quad - \int_\Omega \chi(\vp^t) \hat{\CCC}(\vp^t) \E_y(\bu^t) : \E_y(\tilde{\vv}) \dy = 0
\end{align*}
where $\tilde{\vv} = \vv \circ T_t^{-1} \in \hat{H}^1_D(T_t(\Omega), \RRR^d)$. Since $D_\uu {\bm F}_2(0,\uu): \hatHD \to (\hatHD)^*$ given by
\[
\Big (D_\uu {\bm F}_2(0,\uu)[\vv] \Big )[\ww] = \int_\Omega \hat{\CCC}(\vp) \E(\vv) : \E(\ww) \dx 
\quad \text{ for all } \vv,\ww \in \hat{H}_D^1(\Omega, \RRR^d)
\]
is an isomorphism, by the implicit function theorem we infer that the mapping
\[
t \mapsto  (\hu^t \circ T_t)
\]
is differentiable at $t = 0$ with derivative $\dot{\hu}[\VV] := \pd_t \vert_{t=0} (\hu^t \circ T_t) \in \hatHD$ being the unique solution to the distributional equation
\[
D_\uu {\bm F}_2(0, \hu) \big [ \dot{\hu}[\VV] \big ] = - \pd_t {\bm F}_2(0,\hu) \text{ in } (\hatHD)^*,
\]
which reads as
\begin{equation}\label{hu:shape}
\begin{aligned}
&  \int_\Omega \hat{\CCC}(\vp) \E(\dot{\hu}[\VV]) : \E(\bz) \dx = \int_\Omega  \hat{\CCC}(\vp) (\nabla \VV(0) \nabla \hu - \chi(\vp) \nabla \VV(0) \nabla \bu)^{\mathrm{sym}} : \E(\bz)  \dx \\
& \quad + \int_\Omega \hat{\CCC}(\vp) (\E(\hu) - \chi(\vp) \E(\bu)) : (\nabla \VV(0) \nabla \bz)^{\mathrm{sym}} + \chi(\vp) \hat{\CCC}(\vp) \E(\dot{\bu}[\VV]) : \E(\bz) \dx \\
& \quad -\int_\Omega \hat{\CCC}(\vp) (\E(\hu)  - \chi(\vp) \E(\bu)): \E(\bz) \div \VV(0) \dx \\
& \quad +
\int_\Omega \Big ( \nabla \hat{\FF} \VV(0) + \hat{\FF} \div (\VV(0)) \Big ) \cdot \bz \dx \\
& \quad + \int_{\hat{\Gamma}_N} \Big (\nabla \hat{\gg} \VV(0) + \hat{\gg} \Big ( \div(\VV(0)) - \nn \cdot \nabla \VV(0) \nn \Big ) \Big ) \cdot \bz  \dH 
\end{aligned}
\end{equation}
for all $\bz \in \hatHD$. Substituting $\bz = \dot{\hu}[\VV]$ into \eqref{hu:shape}, then using \eqref{bu:shape:est} and Korn's inequality, we obtain the estimate
\begin{equation}\label{hu:shape:est}
\begin{aligned}
\norma{\dot{\hu}[\VV]}_{H^1(\Omega)} & \leq C \Big ( \| \hu \|_{H^1(\Omega)} + \| \bu \|_{H^1(\Omega)}  \Big ) \\
& \quad + C \Big ( \| \ov{\FF} \|_{H^1(\Omega)} + \| \ov{\gg} \|_{H^2(\ov \Gamma_N)} + \| \hat{\FF} \|_{H^1(\Omega)} + \| \hat{\gg} \|_{H^2(\hat \Gamma_N)} \Big ).
\end{aligned}
\end{equation}
The next result details an optimality condition for minimisers $\vp^\eps$ to $(\bm P^\eps)$ obtained via geometric variations.
\begin{thm}\label{thm:opt:variation}
Assume \ref{ass:dom}--\ref{ass:utar}, and additionally suppose that $\ov{\FF}, \hat{\FF} \in H^1(\Omega, \RRR^d)$, $\bU = \hU = \bm{0}, \ov{\gg} \in H^2(\ov \Gamma_N, \RRR^d)$, $\hat{\gg} \in H^2(\hat \Gamma_N, \RRR^d)$  and $\uu^{\rm tar} \in H^2(\Gamma^{\rm tar}, \RRR^d)$ hold. Let $\vp^\eps \in \Uad$ be a minimiser to $(\bm P^\eps)$, with corresponding solutions $\bu^\eps = \S_1^2(\vp^\eps)$ and $\hu^\eps = \S(\vp^\eps)$.  For every admissible velocity $\VV \in \Vad$, let $(\dot{\bu^\eps}[\VV], \dot{\hu^\eps}[\VV]) \in \ovHD \times \hatHD$ denote the unique solutions to \eqref{bu:shape} and \eqref{hu:shape} with $(\vp,\bu, \hu) = (\vp^\eps, \bu^\eps, \hu^\eps)$. Then, for all $\VV \in \Vad$, it holds that 
\begin{equation}\label{opt:variation}
\begin{aligned}
0 & = \frac{1}{2}\int_{\Gamma^{\rm tar}}  W(\hu^\eps - \uu^{\rm tar}) \cdot (\hu^\eps - \uu^{\rm tar}) \Big ( \div \VV(0)  - \nn \cdot \nabla \VV(0) \nn \Big )\dH \\
& \quad + \int_{\Gamma^{\rm tar}} W(\hu^\eps - \uu^{\rm tar}) \cdot (\dot{\hu^\eps}[\VV] - (\nabla \uu^{\rm tar}) \VV(0)) \dH \\
& \quad + \gamma \iO { \Big ( \frac{\eps}{2} |\nabla \vp^\eps|^2 + \frac{1}{\eps} \Psi(\vp^\eps) \Big ) \div \VV(0) - \eps \nabla \vp^\eps \cdot \nabla \VV(0) \nabla \vp^\eps}.
\end{aligned}
\end{equation}
\end{thm}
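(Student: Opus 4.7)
The approach I would take is the classical one for shape optimisation via variations of the independent variable: use the flow $T_t \in \Tad$ generated by an arbitrary $\VV \in \Vad$ to produce a family of admissible competitors $\vp^{\eps,t} := \vp^\eps \circ T_t^{-1}$, and then expand the cost functional in $t$ around $t=0$. Admissibility is immediate: since $T_t^{-1}$ is a $C^2$-diffeomorphism of $\Omega$ onto itself (thanks to $\VV \cdot \nn = 0$ on $\pd \Omega$), $\vp^{\eps,t} \in H^1(\Omega)$, and the pointwise bound $\vp^{\eps,t}(\x) \in [-1,1]$ is preserved by composition, so $\vp^{\eps,t} \in \Uad$. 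Minimality of $\vp^\eps$ then forces $\frac{d}{dt}\vert_{t=0} \Jred^\eps(\vp^{\eps,t}) = 0$, and the claim \eqref{opt:variation} is obtained by computing the three contributions to this derivative.

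For the Ginzburg--Landau part I would pull back via the change of variables $\y = T_t(\x)$, using $\nabla_\y \vp^{\eps,t}(T_t(\x)) = (\nabla T_t)^{-\top}(\x) \nabla \vp^\eps(\x)$, to rewrite
\begin{equation*}
E_\eps(\vp^{\eps,t}) = \int_\Omega \Bigl(\frac{\eps}{2}\bigl|(\nabla T_t)^{-\top} \nabla \vp^\eps\bigr|^2 + \frac{1}{\eps}\Psi(\vp^\eps)\Bigr) \det(\nabla T_t) \dx.
\end{equation*}
Differentiating under the integral, using $\pd_t \det(\nabla T_t)\vert_{t=0} = \div \VV(0)$ and $\pd_t (\nabla T_t)^{-\top}\vert_{t=0} = -(\nabla \VV(0))^\top$, together with the identity $\vv \cdot (\nabla \VV(0))^\top \vv = \nabla \VV(0)\vv \cdot \vv$, produces the third line of \eqref{opt:variation}. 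For the matching term, a surface change of variables (legitimate because $T_t$ preserves $\Gamma^{\rm tar}$) with tangential Jacobian $\omega(t,\cdot) = \det(\nabla T_t)\|\nabla T_t^{-\top}\nn\|$ yields
\begin{equation*}
G(\vp^{\eps,t}) = \frac{1}{2} \int_{\Gamma^{\rm tar}} W\bigl(\hu^{\eps,t}\!\circ\! T_t - \uu^{\rm tar}\!\circ\! T_t\bigr) \cdot \bigl(\hu^{\eps,t}\!\circ\! T_t - \uu^{\rm tar}\!\circ\! T_t\bigr) \omega(t,\cdot) \dH,
\end{equation*}
and the product rule, combined with $\pd_t \omega\vert_{t=0} = \div \VV(0) - \nn \cdot \nabla \VV(0)\nn$ and $\pd_t(\uu^{\rm tar}\!\circ\! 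T_t)\vert_{t=0} = (\nabla \uu^{\rm tar})\VV(0)$, delivers the first two lines of \eqref{opt:variation}.

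The principal difficulty is justifying the derivative of the state trace: $\hu^{\eps,t}$ lives on $\Omega$ but with $t$-dependent coefficients, so its trace on $\Gamma^{\rm tar}$ is not directly differentiable in $t$. This is precisely the issue resolved in the discussion preceding the theorem via the nonlinear map $\bm F_2(t,\cdot)$: transporting everything back to the reference configuration turns $t \mapsto \hu^{\eps,t}\circ T_t$ into an $\hatHD$-valued map whose differentiability at $t=0$ follows from the implicit function theorem, the linearisation $D_{\uu} \bm F_2(0, \hu^\eps)$ being an isomorphism by Lax--Milgram. Its derivative is identified with $\dot{\hu^\eps}[\VV]$ solving \eqref{hu:shape}, and continuity of the trace operator from $\hatHD$ into $L^2(\Gamma^{\rm tar},\RRR^d)$ transfers this differentiability to the boundary integral in $G$, completing the computation.
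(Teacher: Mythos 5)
Your proposal is correct and follows essentially the same route as the paper: transport along $T_t$ to get admissible competitors, use minimality to set the $t$-derivative to zero, differentiate the Ginzburg--Landau term via the volume change of variables (the paper cites Hecht's Lemma~7.5 for exactly this computation) and the matching term via the surface change of variables with Jacobian $\det(\nabla T_t)\|\nabla T_t^{-\top}\nn\|$, relying on the differentiability of $t\mapsto \hu^{\eps,t}\circ T_t$ established beforehand through the implicit function theorem applied to $\bm F_2$. No gaps to report.
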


\begin{remark}
With more regularity, it is possible to relate \eqref{opt:variation} to the optimality condition \eqref{foc:final}, see \cite{BGHR,Hecht} for more details.
\end{remark}

\begin{proof}
For any $\VV \in \Vad$, let $T \in \Tad$ be the associated transformation and consider the scalar function $g(t) := \Jred^\eps(\vp^\eps \circ T_t^{-1})$ for $t \in (-\tau_0, \tau_0)$, where $\tau_0$ is sufficiently small.  As $\vp^\eps$ is a minimiser of $\Jred^\eps$, we have 
\[
g'(0) = \frac{d}{dt} \Jred^\eps(\vp^\eps \circ T_t^{-1}) \vert_{t=0} = 0.
\]
The directional derivative
\[
\frac{d}{dt} E_\eps(\vp^\eps \circ T_t^{-1}) \vert_{t=0} =  \iO { \Big ( \frac{\eps}{2} |\nabla \vp^\eps|^2 + \frac{1}{\eps} \Psi(\vp^\eps) \Big ) \div \VV(0) - \eps \nabla \vp^\eps \cdot \nabla \VV(0) \nabla \vp^\eps} 
\]
can be obtained as in \cite[Lem.~7.5]{Hecht}.  Denoting by $\hu^\eps(t) = \S(\vp^\eps \circ T_t^{-1})$, then the derivative of $G(\vp^\eps \circ T_t^{-1})$ can be obtained by a standard change of variables:
\begin{align*}
& \frac{d}{dt} G(\vp^\eps \circ T_t^{-1}) \\
&\quad  = \frac{1}{2} \frac{d}{dt} \int_{\Gamma^{\rm tar}} W((\hu^\eps(t) - \uu^{\rm tar}) \circ T_t) \cdot ((\hu^\eps(t) - \uu^{\rm tar}) \circ T_t) \det(\nabla T_t) \| \nabla T_t^{-\top} \nn \| \dH \\
& \quad = \int_{\Gamma^{\rm tar}} W(\hu^{\eps} - \uu^{\rm tar}) \cdot \frac{d}{dt} (\hu^\eps(t) \circ T_t - \uu^{\rm tar} \circ T_t) \vert_{t=0} \dH \\
& \qquad + \frac{1}{2} \int_{\Gamma^{\rm tar}} W(\hu^{\eps} - \uu^{\rm tar}) \cdot (\hu^{\eps} - \uu^{\rm tar}) \frac{d}{dt} (\det(\nabla T_t) \| \nabla T_t^{-\top} \nn \|) \dH \\
& \quad = \int_{\Gamma^{\rm tar}} W(\hu^\eps - \uu^{\rm tar}) \cdot (\dot{\hu^\eps}[\VV] - (\nabla \uu^{\rm tar}) \VV(0)) \dH  \\
 & \qquad + \frac{1}{2} \int_{\Gamma^{\rm tar}}  W(\hu^\eps - \uu^{\rm tar}) \cdot (\hu^\eps - \uu^{\rm tar}) \Big ( \div \VV(0)  - \nn \cdot \nabla \VV(0) \nn \Big )\dH
\end{align*}
leading to \eqref{opt:variation}.
\end{proof}

The convergence of \eqref{opt:variation} to the sharp interface limit $\eps \to 0$ is formulated as follows.

\begin{thm}
Suppose the hypotheses of Theorem \ref{thm:opt:variation} hold, and let $\vp^\eps \in \Uad$ be a minimiser to $(\bf P^\eps)$.  For any $\VV \in \Vad$ with corresponding transformation $T \in \Tad$, there exists a non-relabelled subsequence $\eps \to 0$ such that 
\begin{align*}
\vp^\eps & \to \vp_0 \text{ in } L^1(\Omega), \quad \Jred^\eps(\vp^\eps) \to \Jred^0(\vp_0) \text{ in } \RRR, \\
\dot{\bu^\eps}[\VV] & \rightharpoonup \dot{\bu}_0[\VV] \text{ in } \ovHD, \quad \dot{\hu^\eps}[\VV]  \rightharpoonup \dot{\hu}_0[\VV] \text{ in } \hatHD, 
\end{align*}
where $\vp_0 \in \BV(\Omega, \{-1,1\})$ is a minimiser to the reduced functional $\Jred^0(\vp) = G(\vp) + \gamma TV(\vp)$, where the total variation $TV(\vp)$ for $\vp \in \BV(\Omega)$ is defined as
\begin{align*}
TV(\vp) = \sup \Big \{ \int_\Omega \vp \div \bm{\phi} \dx \text{ s.t. } \bm{\phi} \in C^1_0(\Omega, \RRR^d), \, \| \bm{\phi} \|_{L^\infty(\Omega)} \leq 1 \Big \}.
\end{align*}
Furthermore, $\dot{\bu}_0[\VV]$ and $\dot{\hu}_0[\VV]$ satisfy \eqref{bu:shape} and \eqref{hu:shape}, respectively, with $(\vp, \bu, \hu)$ replaced by $(\vp_0, \bu_0 = \S_1^2(\vp_0), \hu_0 = \S_2(\vp_0))$. Lastly, it holds that 
\begin{align}\label{opt:cond:conv}
\frac{d}{dt} \Jred^\eps(\vp^\eps \circ T_t^{-1}) \vert_{t=0} \to \frac{d}{dt} \Jred^0(\vp_0 \circ T_t^{-1}) \vert_{t=0} \text{ in } \RRR, 
\end{align}
where
\begin{equation}\label{J0red:opt}
\begin{aligned}
& \frac{d}{dt} \Jred^0(\vp_0 \circ T_t^{-1}) \vert_{t=0} \\
& \quad  = \int_{\Gamma^{\rm tar}} W(\hu_0 - \uu^{\rm tar}) \cdot (\dot{\hu}_0[\VV] - (\nabla \uu^{\rm tar}) \VV(0)) \dH  \\
& \qquad + \frac{1}{2} \int_{\Gamma^{\rm tar}}  W(\hu_0 - \uu^{\rm tar}) \cdot (\hu_0 - \uu^{\rm tar}) \Big ( \div \VV(0)  - \nn \cdot \nabla \VV(0) \nn \Big )\dH \\
& \qquad +\gamma  \int_\Omega \Big ( \div \VV(0) - \mu \cdot \nabla \VV(0) \mu \Big ) d | \mathrm{D} \bigchi_{\{ \vp_0 = 1 \}}|,
\end{aligned}
\end{equation}
with $\mu = \frac{\mathrm{D} \bigchi_{\{ \vp_0 = 1 \}}}{|\mathrm{D} \bigchi_{\{ \vp_0 = 1 \}}|}$ as the generalised unit normal on the set $\{\vp_0 = 1\}$.
\end{thm}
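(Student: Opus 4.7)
The plan is to combine three ingredients: the $\Gamma$-convergence of Lemma~\ref{lem:SI:Gamma}, the stability of the shape-derivative systems \eqref{bu:shape}--\eqref{hu:shape} under $L^1$-convergence of $\vp^\eps$, and a varifold-type convergence for the diffuse perimeter that exploits the energy convergence provided by minimality.

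\textbf{Step 1: phase fields, states, and shape derivatives.} I would first specialise Lemma~\ref{lem:SI:Gamma} to the scalar two-phase setting to extract a non-relabelled subsequence along which $\vp^\eps \to \vp_0$ in $L^1(\Omega)$ and a.e.~in $\Omega$, with $\vp_0 \in \BV(\Omega,\{-1,1\})$ a minimiser of $\Jred^0$ and $\Jred^\eps(\vp^\eps) \to \Jred^0(\vp_0)$. Corollary~\ref{COR:cts} then delivers the strong convergences $\bu^\eps \to \bu_0 := \S_1^2(\vp_0)$ in $\ovHD$ and $\hu^\eps \to \hu_0 := \S(\vp_0)$ in $\hatHD$, and the estimates \eqref{bu:shape:est}--\eqref{hu:shape:est} yield weak subsequential limits $\dot{\bu}_0[\VV] \in \ovHD$ and $\dot{\hu}_0[\VV] \in \hatHD$ of $\dot{\bu^\eps}[\VV]$ and $\dot{\hu^\eps}[\VV]$.

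\textbf{Step 2: limiting shape-derivative systems.} To show that these limits satisfy \eqref{bu:shape} and \eqref{hu:shape} with $(\vp,\bu,\hu)$ replaced by $(\vp_0,\bu_0,\hu_0)$, I would pass to the limit termwise in the corresponding weak formulations. The coefficients $\ov{\CCC}(\vp^\eps)$, $\hat{\CCC}(\vp^\eps)$, $\chi(\vp^\eps)$ converge strongly in every $L^p$, $p<\infty$, by dominated convergence; this strong convergence pairs with either the strongly convergent states $\bu^\eps, \hu^\eps$ or the weakly convergent derivatives $\dot{\bu^\eps}[\VV], \dot{\hu^\eps}[\VV]$ in the standard way, while $\VV(0), \nabla\VV(0), \div\VV(0)$ are smooth and fixed. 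Uniqueness for the limit system (Proposition~\ref{THM:EX:ABS}) then identifies the limits and implies that the extracted convergences in fact hold along the original subsequence.

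\textbf{Step 3: convergence of $\frac{d}{dt}\Jred^\eps$ and the main obstacle.} In \eqref{opt:variation}, the two surface integrals pass to the limit by pairing the weak convergence of $\dot{\hu^\eps}[\VV]$ with the strong trace convergence $\hu^\eps \to \hu_0$ in $L^2(\Gamma^{\rm tar},\RRR^d)$, producing the first two terms of \eqref{J0red:opt}. The principal obstacle is the limit of the diffuse interface contribution
\[
\iO{ \Big( \tfrac{\eps}{2}|\nabla \vp^\eps|^2 + \tfrac{1}{\eps}\Psi(\vp^\eps)\Big) \div \VV(0) - \eps\,(\nabla\vp^\eps \otimes \nabla\vp^\eps):\nabla \VV(0) },
\]
which cannot be controlled by $\Gamma$-convergence alone because the tangential stress structure is not detected by mere energy convergence. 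My strategy is to exploit that $\Jred^\eps(\vp^\eps) \to \Jred^0(\vp_0)$ combined with the continuity of $G$ forces $E_\eps(\vp^\eps) \to TV(\vp_0)$ (up to the Modica--Mortola normalisation). A standard localisation argument then upgrades this to equipartition of energy, whereby $\tfrac{\eps}{2}|\nabla\vp^\eps|^2$ and $\tfrac{1}{\eps}\Psi(\vp^\eps)$ converge weakly-$*$ in $\mathcal{M}(\ov\Omega)$ to the same limit, and Hutchinson--Tonegawa--R\"oger--Sch\"atzle varifold convergence then yields
\[
\eps\,\nabla\vp^\eps \otimes \nabla\vp^\eps \rightharpoonup \mu\otimes\mu\,|\mathrm{D}\bigchi_{\{\vp_0=1\}}|
\]
as matrix-valued Radon measures on $\ov\Omega$. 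Contracting against the continuous field $\nabla\VV(0)$ and combining with the scalar-density convergence against $\div\VV(0)$ delivers the last term of \eqref{J0red:opt}, and assembling the three contributions yields \eqref{opt:cond:conv}.
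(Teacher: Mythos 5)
Your proposal is correct and follows essentially the same route as the paper: Lemma \ref{lem:SI:Gamma} for the convergence of $\vp^\eps$ and of the reduced cost, Corollary \ref{COR:cts} together with the bounds \eqref{bu:shape:est}--\eqref{hu:shape:est} and limit passage in the weak formulations to identify $\dot{\bu}_0[\VV]$ and $\dot{\hu}_0[\VV]$, and trace compactness for the surface terms. The only difference is the interfacial term: where the paper simply cites the calculations of Garcke (Thm.~4.2 of the cited $\Gamma$-limit paper), you sketch the underlying argument (energy convergence of minimisers forcing equipartition and convergence of the measures $\eps\,\nabla\vp^\eps\otimes\nabla\vp^\eps$ to the projection along the generalised normal), which is precisely the content of that reference, so no genuinely new route is taken.
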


\begin{remark}
With more regularity, it is possible to relate \eqref{J0red:opt} with the solvability condition \eqref{SI:Phi1:b} in the two-phase setting, see \cite{BGHR,Hecht} for more details.
\end{remark}

\begin{proof}
The first two assertions on the convergence of $\vp^\eps$ and $\Jred^\eps(\vp^\eps)$ come from Lemma \ref{lem:SI:Gamma}. Consequently, by the calculations in the proof of \cite[Thm.~4.2]{Garcke} we infer the convergence, as $\eps \to 0$,
\begin{align*}
& \iO{\Big ( \frac{\eps}{2} |\nabla \vp^\eps|^2 + \frac{1}{\eps} \Psi(\vp^\eps) \Big ) \div \VV(0) - \eps \nabla \vp^\eps \cdot \nabla \VV(0) \nabla \vp^\eps}  \\
& \quad \to  \int_\Omega \Big ( \div \VV(0) - \mu \cdot \nabla \VV(0) \mu \Big ) d | \mathrm{D} \bigchi_{\{ \vp_0 = 1 \}}|.
\end{align*}
Next, from \eqref{bu:shape} and \eqref{hu:shape}, we see that $\dot{\bu^\eps}[\VV]$ and $\dot{\hu^\eps}[\VV]$ satisfy
\begin{align*}
\iO { \ov{\CCC}(\vp^\eps) \E(\dot{\bu^\eps}[\VV]) : \E(\bz) } = \mathcal{F}_{\vp^\eps, \bu^\eps}(\bz), \quad \iO{ \hat{\CCC}(\vp^\eps) \E(\dot{\hu^\eps}[\VV]) : \E(\bz)} = \mathcal{F}_{\vp^\eps, \bu^\eps, \hu^\eps}(\bz),
\end{align*}
where $\mathcal{F}_{\vp^\eps, \bu^\eps}(\bz)$ and $\mathcal{F}_{\vp^\eps, \bu^\eps, \hu^\eps}(\bz)$ denotes the right-hand sides of \eqref{bu:shape} and \eqref{hu:shape}, respectively.  Thanks to Corollary \ref{COR:cts}, along a non-relabelled subsequence, $\bu^\eps \to \bu_0 \in \ovHD$ and $\hu^\eps \to \hu_0 \in \hatHD$ strongly as $\eps \to 0$.  Hence, together with the dominated convergence theorem, it is clear that, as $\eps \to 0$,
\[
 \mathcal{F}_{\vp^\eps, \bu^\eps}(\bz) \to \mathcal{F}_{\vp_0, \bu_0}(\bz), \quad \mathcal{F}_{\vp^\eps, \bu^\eps, \hu^\eps}(\bz) \to \mathcal{F}_{\vp_0, \bu_0, \hu_0}(\bz).
\]
On the other hand, we infer from \eqref{bu:shape:est} and \eqref{hu:shape:est} that $\dot{\bu^\eps}[\VV]$ and $\dot{\hu^\eps}[\VV]$ are uniformly bounded in $\ovHD$ and $\hatHD$, which then implies the existence of limit functions $\dot{\bu}_0[\VV] \in \ovHD$ and $\dot{\hu}_0[\VV] \in \hatHD$, where
\begin{align*}
\iO { \ov{\CCC}(\vp^\eps) \E(\dot{\bu^\eps}[\VV]) : \E(\bz) } &  \to  \iO{ \ov{\CCC}(\vp_0) \E( \dot{\bu}_0[\VV]) : \E(\bz)  }, \\
\iO { \hat{\CCC}(\vp^\eps) \E(\dot{\hu^\eps}[\VV]) : \E(\bz) } &  \to  \iO{ \hat{\CCC}(\vp_0) \E( \dot{\hu}_0[\VV]) : \E(\bz)  }.
\end{align*}
Lastly, using the compactness of the boundary-trace operator, we obtain, as $\eps \to 0$,
\begin{align*}
&\int_{\Gamma^{\rm tar}} W(\hu^\eps - \uu^{\rm tar}) \cdot (\dot{\hu^\eps}[\VV] - (\nabla \uu^{\rm tar}) \VV(0)) \dH  \\
 & \qquad + \frac{1}{2} \int_{\Gamma^{\rm tar}}  W(\hu^\eps - \uu^{\rm tar}) \cdot (\hu^\eps - \uu^{\rm tar}) \Big ( \div \VV(0)  - \nn \cdot \nabla \VV(0) \nn \Big )\dH \\
 & \quad \to \int_{\Gamma^{\rm tar}} W(\hu_0 - \uu^{\rm tar}) \cdot (\dot{\hu}_0[\VV] - (\nabla \uu^{\rm tar}) \VV(0)) \dH \\
& \qquad + \frac{1}{2}\int_{\Gamma^{\rm tar}}W(\hu_0 - \uu^{\rm tar}) \cdot (\hu_0 - \uu^{\rm tar}) \Big ( \div \VV(0)  - \nn \cdot \nabla \VV(0) \nn \Big )\dH.
\end{align*}
This shows \eqref{opt:cond:conv} and completes the proof.
\end{proof}

\section{Numerical simulations}
\label{SEC:NUM}

In this section we present the finite element discretisation and showcase several numerical simulations in two and three dimensions for the two-phase case. Namely, we have $L = 2$ and we consider the optimal distribution of a single type of active material within a passive material.

\subsection{Finite element discretisation}
We assume that $\Omega$ is a polyhedral domain and
let $\mathcal{T}_{h}$ be a regular triangulation of $\Omega$ into disjoint open simplices. Associated with $\mathcal{T}_h$ are the piecewise linear finite element spaces
\begin{align*}
S^{h} = \left \{\zeta \in C^{0}(\overline\Omega) : \,  \zeta_{\vert_{o}} \in P_{1}(o) \, \forall o \in \mathcal{T}_{h} \right \}
\quad\text{and}\quad \SS^{h} =S^h \times \cdots \times S^h =  [S^{h}]^d,
\end{align*}
where we denote by $P_{1}(o)$ the set of all affine linear functions on $o$,
cf.\ \cite{Ciarlet78}. 
In addition we define %the convex subset
\begin{equation} \label{eq:Khm}
\V^h = \left\{ \zeta \in S^h : |\zeta| \leq 1  \text{ in } \overline\Omega \right\},
%\quad\text{and}\quad
%\V^h_m = \left\{ \zeta \in \V^h : (\zeta, 1) = m (1,1) \right\},
\end{equation}
as well as 
\begin{equation*}
\ov{\SS}^h_D = \left\{ 
\bm{\eta} \in \SS^h : \bm{\eta} = {\bf 0} \text{ on } \ov{\Gamma}_D
\right\}, \quad 
\hat \SS^h_D = \left\{ 
\bm{\eta} \in \SS^h : \bm{\eta} = {\bf 0} \text{ on } \hat \Gamma_D
\right\}.
\end{equation*}
We also let $(\cdot,\cdot)$ denote the $L^{2}$--inner product on $\Omega$,
and let $(\cdot,\cdot)^{h}$ be the usual mass lumped $L^{2}$--inner product on
$\Omega$ associated with $\mathcal{T}_{h}$.
In addition, $\langle \AA,\BB \rangle_{\CCC} = (\CCC \AA,\BB)$ 
for any fourth order tensor $\CCC$ and any matrices $\AA$ and $\BB$.
Finally, $\tau$ denotes a chosen uniform  time step size.

We now introduce finite element approximations of the state equations \eqref{wf:sys:1} and \eqref{wf:sys:2}, adjoint systems \eqref{wf:adj:q} and \eqref{wf:adj:p}, as well as the optimality conditions \eqref{foc:final} with a pseudo-time evolution based on an $L^2$-gradient flow approach. 
In particular, we consider the obstacle potential \eqref{eq:obs} 
with $\widetilde\Psi(s) = \frac{1}{2}(1-s^2)$,
which leads to a variational inequality.

The fully discrete numerical scheme is formulated as follows: Given $\vp_h^{n-1} \in \V^h$, find 
$(\bu_h^n, \hu_h^n, \hat{\qq}_h^n, \bp_h^n,\vp_h^n) \in \ov{\SS}^h_D \times \hat{\SS}^h_D \times \hat{\SS}^h_D \times \ov{\SS}^h_D\times \V^h$ such that
\begin{subequations} \label{eq:FEA}
\begin{align}
&
\langle \E(\bu_h^n),
\E(\bz)\rangle_{ \ov{\CCC}(\vp_h^{n-1})}
= \left ( \ov{\FF}, \bz \right)^h + \int_{\ov{\Gamma}_N} \ov{\gg} \cdot \bz \dH \qquad \forall \bz \in \ov{\SS}^h_D, \label{eq:FEAa}\\
&
\langle \E(\hu_h^n) - \chi(\vp_h^{n-1}) \E(\bu_h^n),
\E(\bz)\rangle_{ \hat{\CCC}(\vp_h^{n-1})}
= \big( \hat{\FF}, \bz \big)^h + \int_{\hat{\Gamma}_N} \hat{\gg} \cdot \bz \dH \qquad \forall \bz \in \hat{\SS}^h_D, \label{eq:FEAb}\\
&
\langle \E(\hat{\qq}_h^n),
\E(\bz)\rangle_{ \hat{\CCC}(\vp_h^{n-1})}
= \int_{\Gamma^{\rm tar}} W(\hu_h^n - \uu^{\rm tar}) \cdot \bz  \dH \qquad \forall \bz \in \hat{\SS}^h_D, \label{eq:FEAc}\\
&
\langle \E(\bp_h^n),
\E(\bz)\rangle_{ \ov{\CCC}(\vp_h^{n-1})}
= \langle \chi(\vp_h^{n-1}) \E(\hat{\qq}_h^n), \E(\bz) \rangle_{\hat{\CCC}(\vp_h^{n-1})} \qquad \forall \bz \in \ov{\SS}^h_D, \label{eq:FEAd}\\
& \left( \tfrac{\eps}{\tau} (\varphi_h^n - \varphi_h^{n-1}) 
- \tfrac{\gamma}{\eps} \varphi_h^n, \zeta - \varphi_h^n \right)^h
+ \gamma \eps (\nabla \varphi_h^n, 
\nabla(\zeta - \varphi_h^n)) \nonumber \\ & \qquad \quad
+ \langle \chi'(\vp_h^{n-1}) \E(\bu_h^n), (\zeta - \varphi_h^n)
\E(\hat{\qq}_h^n) \rangle_{\hat{\CCC}(\vp_h^{n-1})} 
\nonumber \\ & \qquad \quad
-\langle \E(\hu_h^n) - \chi(\vp_h^{n-1}) \E(\bu_h^n), (\zeta - \varphi_h^n)
\E(\hat{\qq}_h^n) \rangle_{ \hat{\CCC}'(\vp_h^{n-1})} \nonumber \\
& \quad \qquad  - \langle \E(\bu_h^n), (\zeta - \varphi_h^n) \E(\bp_h^n)
\rangle_{\ov{\CCC}'(\vp_h^{n-1})}
\geq 0
\qquad \forall \zeta \in \V^h. \label{eq:FEAe}
\end{align}
\end{subequations}
We implemented the scheme \eqref{eq:FEA} with the help of the finite element 
toolbox ALBERTA, see \cite{Alberta}. 
To increase computational efficiency, we employ adaptive meshes, which have a 
finer mesh size $h_{f}$ within the diffuse interfacial regions and a coarser 
mesh size $h_{c}$ away from them, see \cite{voids3d,voids}
for a more detailed description. 

Clearly, we first solve the linear systems \eqref{eq:FEAa} in order to obtain $\bu_h^n$, then \eqref{eq:FEAb} for $\hu_h^n$, then \eqref{eq:FEAc} for $\hat{\qq}_h^n$, then \eqref{eq:FEAd} for $\bp_h^n$, and finally the variational inequality \eqref{eq:FEAe} for $\vp_h^n$. 
In two space dimensions we employ the package LDL, see \cite{Davis05}, together with the sparse 
matrix ordering AMD, see \cite{AmestoyDD04}, in order to solve the linear
systems \eqref{eq:FEAa}--\eqref{eq:FEAd}.
In three space dimensions, on the other hand, we use a preconditioned conjugate gradient
algorithm, with a $W$-cycle multigrid step as preconditioner.
In order to solve the variational inequality \eqref{eq:FEAe} we employ a 
nonlinear multigrid method similar to \cite{Kornhuber96}.

For the computational domain we will choose 
$\overline\Omega = [0, L_1] \times [-\frac12,\frac12]$ in two dimensions, and 
$\overline\Omega = [0, L_1] \times [-\frac12 L_2, \frac12 L_2] \times
[-\frac12,\frac12]$ in three dimensions with positive lengths $L_i$,
$i \in \{1,2\}$, given below. For the physical parameters we loosely
follow the settings in \cite{Maute}. In particular, for the forcings we choose
$\ov{\FF} = \hat{\FF} = {\bf 0}$ throughout, as well as 
$\widehat{\bm{g}} = {\bf 0}$ and
\begin{equation} \label{eq:barg}
\overline{\bm{g}}(\x) = \begin{cases}
g {\bm e}_1 & x_1 = L_1, \\
{\bf 0} & x_1 < L_1,
\end{cases}
\qquad \text{with} \quad g=0.1.
\end{equation}
For the interpolated elasticity tensors we choose
$\overline{\CCC}(s) = 
\frac12 [ (1 + s) \overline{\CCC}_+ + (1 - s) \overline{\CCC}_-]$, where the 
two tensors $\overline{\CCC}_\pm$ are defined through the Young's moduli
$\overline{E}_\pm$ and Poisson ratios $\overline\nu_\pm$ via
\begin{equation} \label{eq:barEnu}
\overline{E}_+ = 3,\ \overline{E}_- = 0.7,\quad  
\overline\nu_+ = \overline\nu_-=0.45.
\end{equation}
Similarly, $\widehat{\CCC}(s) = 
\frac12 [ (1 + s) \widehat{\CCC}_+ + (1 - s) \widehat{\CCC}_-]$, with 
the Young's moduli and Poisson ratios
\begin{equation} \label{eq:hatEnu}
\widehat{E}_+ = 13,\ \widehat{E}_- = 0.6,\quad 
\widehat\nu_+ = \widehat\nu_-=0.45.
\end{equation}
For the fixity function we choose 
\begin{equation} \label{eq:chi}
\chi(s) = \tfrac25(1+s).
\end{equation}

For the visualisation of the progress of the discrete gradient flow
computations, we define the discrete cost functional 
\begin{align} \label{eq:J}
J^h(\varphi_h^h, \hu_h^n) &
= \gamma \left(\frac{\eps}2 |\nabla \varphi_h^n|^2 
+ \frac1{\eps} \psi(\varphi_h^n) , 1 \right) 
+ {\frac12} \int_{\Gamma^{\rm tar}} W(\hu_h^n - \bm{u}^{\rm tar}) \cdot 
(\hu_h^n - \bm{u}^{\rm tar}) \dH
\nonumber \\ &
=: \gamma \mathcal{E}^h(\varphi_h^n) + \mathcal{E}^{\rm h,tar}(\hu_h^n).
\end{align}
As the initial data $\varphi_h^0$ we choose a random mixture with mean
zero. Choosing other initial data, including random mixtures with positive or
negative mean had no visible influence on the obtained optimal shapes.

\subsection{Numerical simulations in two dimensions}
For the target shapes we consider the parabolic profile
\begin{subequations} \label{eq:utarget2d}
\begin{equation} \label{eq:utarget}
\bm{u}^{\rm tar}(x_1,x_2) = u^{\rm tar}(x_1) {\bm e}_2,\quad
u^{\rm tar}(x_1) = c^{\rm tar} (x_1)^2 ,
\end{equation}
with $c^{\rm tar} > 0$, and the cosine profile
\begin{equation} \label{eq:utarget2}
\bm{u}^{\rm tar}(x_1,x_2) =  u^{\rm tar}(x_1) {\bm e}_2,\quad
u^{\rm tar}(x_1) = c^{\rm tar} \Big (1- \cos \frac{k^{\rm tar}\pi x_1}{L_1} \Big )
\end{equation}
\end{subequations}
with $c^{\rm tar}>0$ and $k^{\rm tar}>0$. In Figure~\ref{fig:utarget12kickp} we plot 
two examples for the profiles in \eqref{eq:utarget2d} for the domain length $L_1 = 12$.

In each of the Figures~\ref{fig:2d8pi_61kick_g001_Wid}, \ref{fig:2d8pi_cos61_g001_187}, \ref{fig:2d8pi_121kick_g001_Wid} and \ref{fig:2d8pi_cos121kickp15t_g001_Wid}, we provide visualisations of the numerical solution $\vp_h^n$ at various pseudo-times (black denotes the passive material $\{\vp_h^n = -1\}$ and grey denotes the active material $\{\vp_h^n = 1\}$), and the corresponding displacements $\bu_h^n$ (in red) and $\hu_h^n$ (in green). As mentioned before, each time the gradient flow is started from a random mixture $\varphi_h^0$ with zero mean. We also provide pseudo-time plots of the cost functional $J^h(\vp_h^n, \hu_h^n)$, the proportion of the elastic $\mathcal{E}^{\rm h,tar}(\hu_h^n)$ and interfacial $\gamma \mathcal{E}^h(\varphi_h^n)$ energies, as well as log-plots of the elastic energies. The parameter details are summarised in the Table~\ref{tbl:1}.

\begin{table}[h]
\centering
\begin{tabular}{c|c|c|c|c|c|c}
Figure & Profile & Domain & $W$ & $\Gamma^{\rm tar}$ & $c^{\rm tar}$ & $ k^{\rm tar}$ \\
\hline
\ref{fig:2d8pi_61kick_g001_Wid} & Parabolic \eqref{eq:utarget} & $[0,6] \times [-\frac{1}{2}, \frac{1}{2}]$ & $\Id$ & $\pd \Omega$ & $0.075$ & - \\[1ex]
\ref{fig:2d8pi_cos61_g001_187} & Cosine \eqref{eq:utarget2} & $[0,6] \times [-\frac{1}{2},\frac{1}{2}]$ & $\bm{e}_2 \otimes \bm{e}_2$ & $\pd \Omega$ & $0.25$ & $2$ \\[1ex]
\ref{fig:2d8pi_121kick_g001_Wid} & Parabolic \eqref{eq:utarget} & $[0,12] \times [-\frac{1}{2}, \frac{1}{2}]$ & $\Id$ & $\pd \Omega$ & $0.02$ & -  \\[1ex]
\ref{fig:2d8pi_cos121kickp15t_g001_Wid} & Cosine \eqref{eq:utarget2} & $[0,12] \times [-\frac{1}{2},\frac{1}{2}]$ & $\Id$ & $\pd_{\rm top} \Omega$ & $1$ & $1.5$ 
\end{tabular}
\caption{Parameter details for numerical simulations in 2D.}
\label{tbl:1}
\end{table}

For all the presented simulations we choose the parameters $\eps = \frac{1}{8\pi}$ and $\gamma = 0.01$. In each case the cost functional decreases monotonically, but the proportions of the two energies (elastic vs interfacial) differ from case to case.

The first experiment in Figure~\ref{fig:2d8pi_61kick_g001_Wid} is for the
parabolic profile on the domain $[0,6] \times [-\frac{1}{2}, \frac{1}{2}]$. We
observe that in the optimal distribution of material, the active phase occupies
most of the lower domain. This ensures that in the programmed stage, the
printed active composite is able to attain the desired target shape.

\begin{figure}
\center
\includegraphics[angle=-0,width=0.4\textwidth]{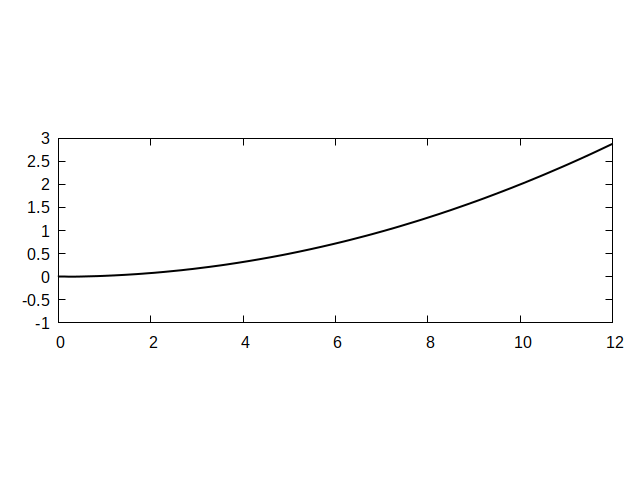}
\includegraphics[angle=-0,width=0.4\textwidth]{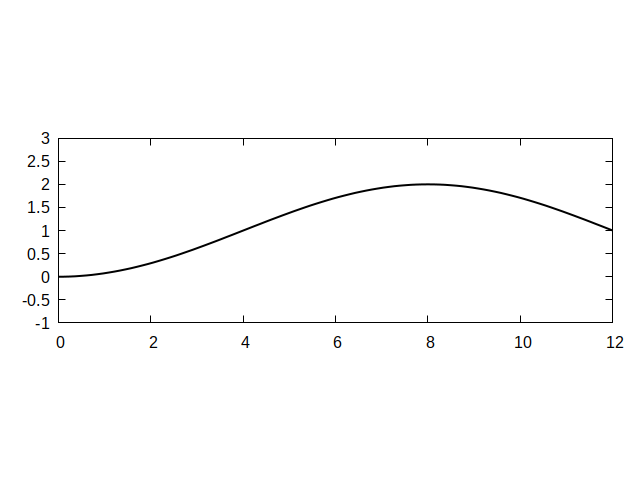}
\caption{Plots of the function $u^{\rm tar}$ in 
\eqref{eq:utarget}, with $c^{\rm tar} = 0.02$, (left) and \eqref{eq:utarget2},
with $c^{\rm tar} = 1$ and $k^{\rm tar}=1.5$, (right), when $L_1 = 12$.
}
\label{fig:utarget12kickp}
\end{figure}%

Not surprisingly, a very different
distribution of material is obtained when changing the
target shape functional to enforce a cosine profile at the programmed stage.
As can be seen from Figure~\ref{fig:2d8pi_61kick_g001_Wid}, the optimal
distribution of the active material is now given by an elongated region that
connects the lower left of the domain with the upper right.
\begin{figure}
\center
\includegraphics[angle=-0,width=0.3\textwidth]{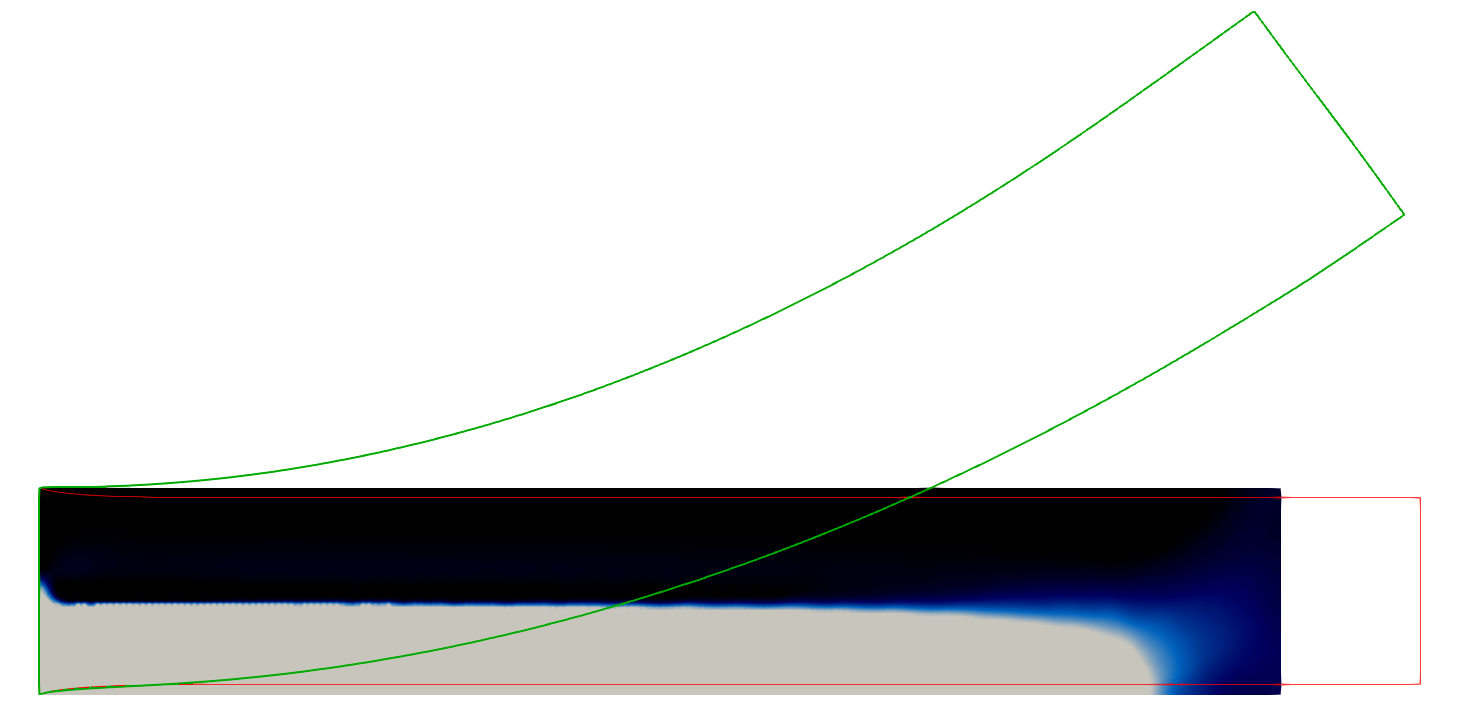}
\includegraphics[angle=-0,width=0.3\textwidth]{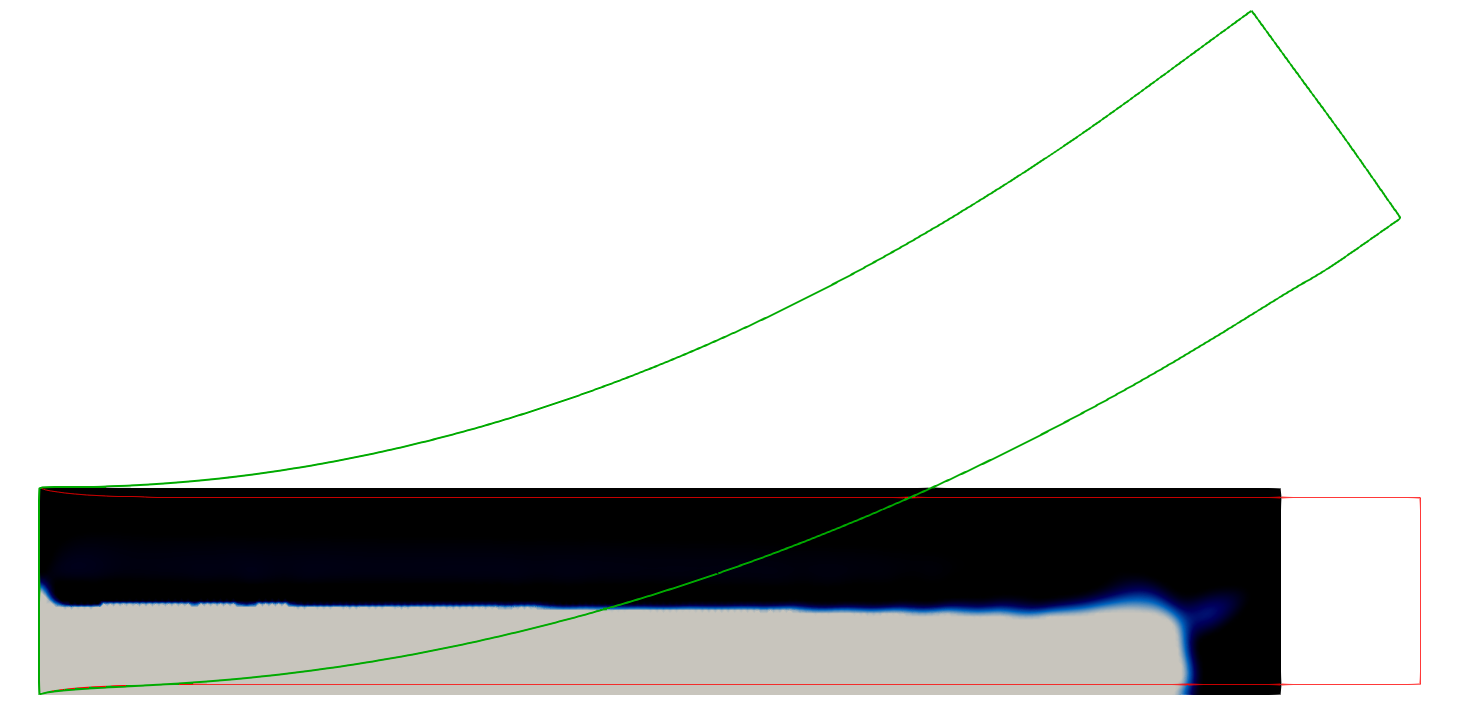}
\includegraphics[angle=-0,width=0.3\textwidth]{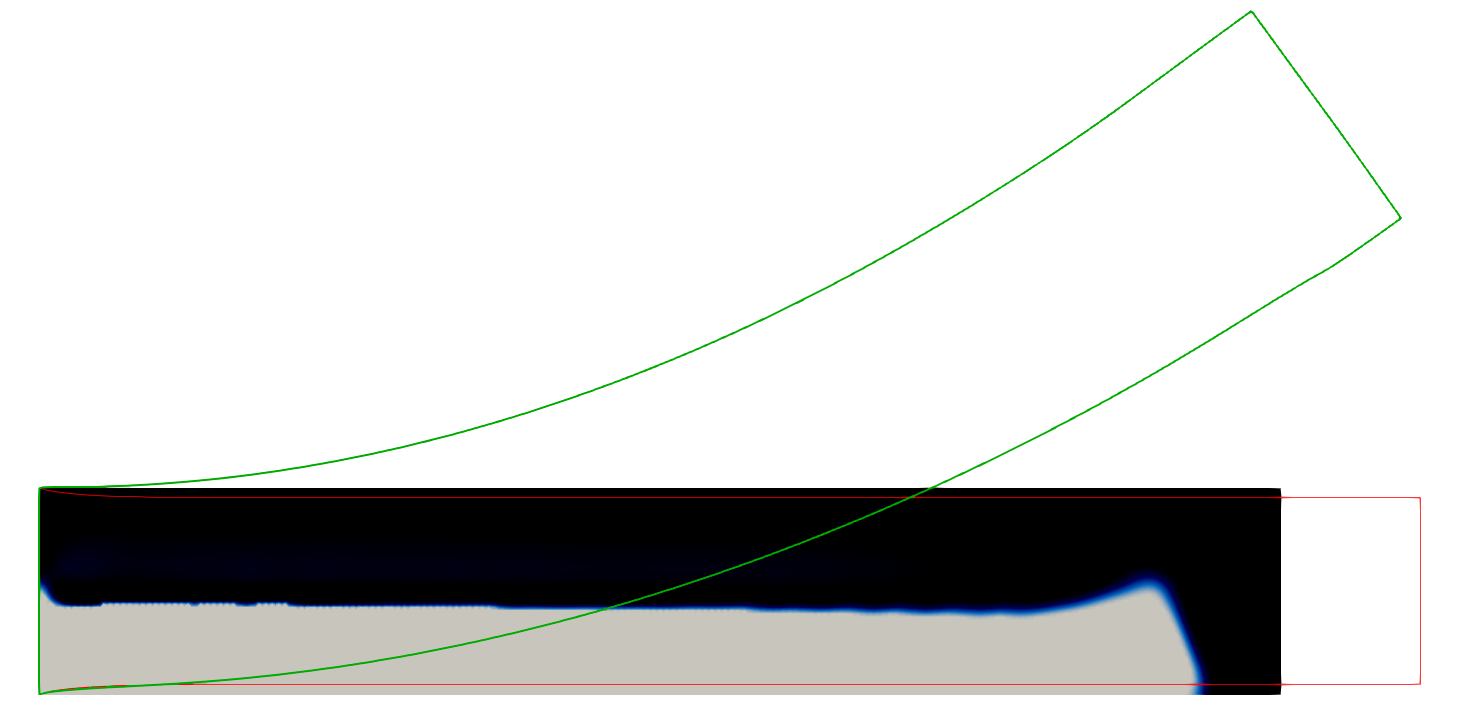} \\[1ex]
\includegraphics[angle=-0,width=0.3\textwidth]{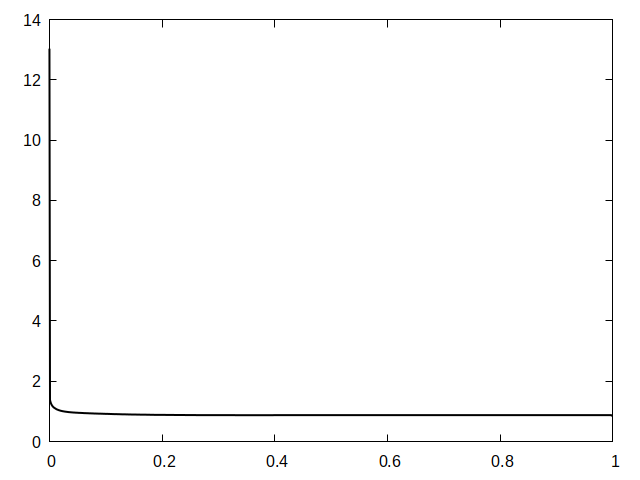}
\includegraphics[angle=-0,width=0.3\textwidth]{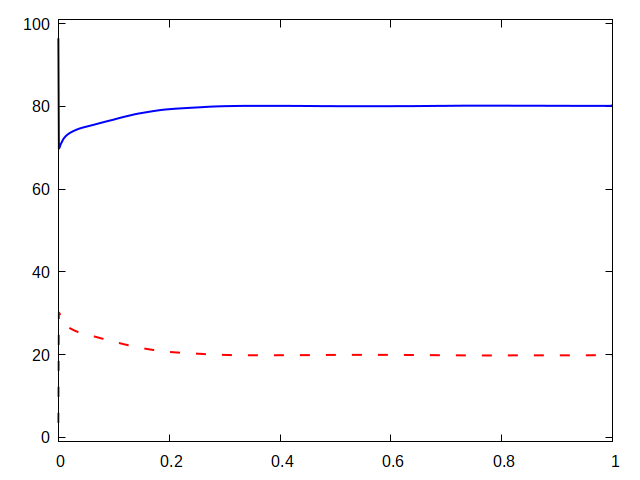}
\includegraphics[angle=-0,width=0.3\textwidth]{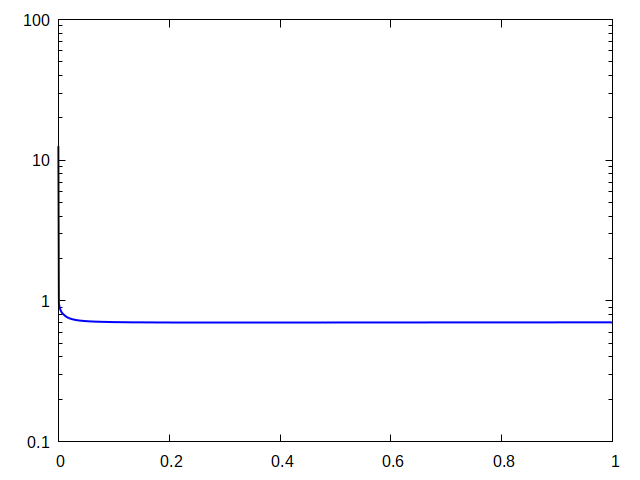}
\caption{%
Computation on $\overline\Omega = [0,6] \times [-\tfrac12, \tfrac12]$
for the target shape \eqref{eq:utarget} with $c^{\rm tar} = 0.075$ and 
$W = \Id$ and $\Gamma^{\rm tar} = \partial\Omega$, 
with $\eps=\frac1{8\pi}$ and $\gamma=0.01$. 
We display $\vp_h^n$ and the 
displacements $\bu_h^n$ (red) and $\hu_h^n$ (green) at pseudo-times 
$t=0.1,\,0.5,\,1$.
Below we show plots of the cost functional $J^h(\vp_h^n, \hu_h^n)$,
the proportion in it of the elastic energy $\mathcal{E}^{\rm h,tar}(\hu_h^n)$
(solid blue) and the interfacial energy $\gamma \mathcal{E}^h(\varphi_h^n)$ 
(dashed red), as well as of $\log_{10} \mathcal{E}^{\rm h,tar}(\hu_h^n)$.
}
\label{fig:2d8pi_61kick_g001_Wid}
\end{figure}%

It turns out that on longer (or thinner) domains, far less active material is
needed to achieve significant deformations at the programmed state. For
example, in Figure~\ref{fig:2d8pi_cos61_g001_187} a miniscule amount of active
material, spread in several connected components arranged at the bottom of the
domain, is sufficient to result in the desired parabolic target shape.
\begin{figure}
\center
\mbox{
\includegraphics[angle=-0,width=0.3\textwidth]{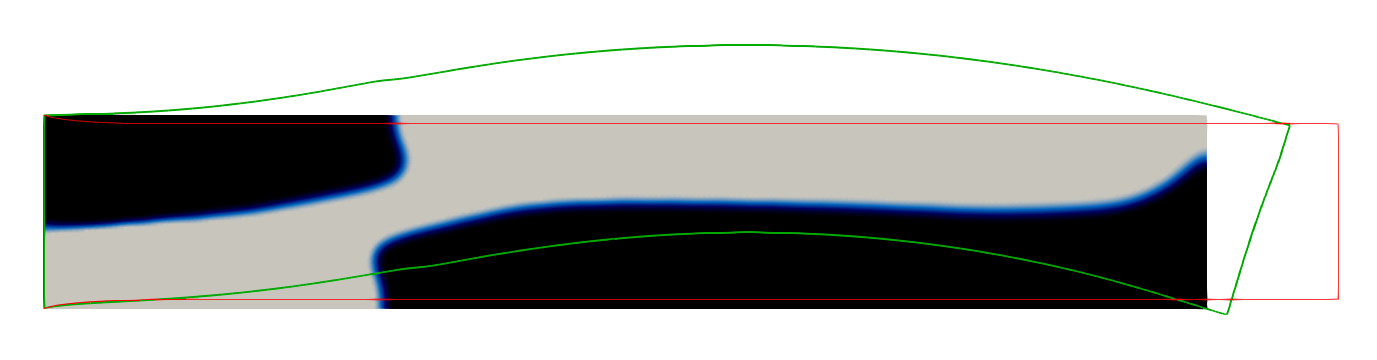}
\includegraphics[angle=-0,width=0.3\textwidth]{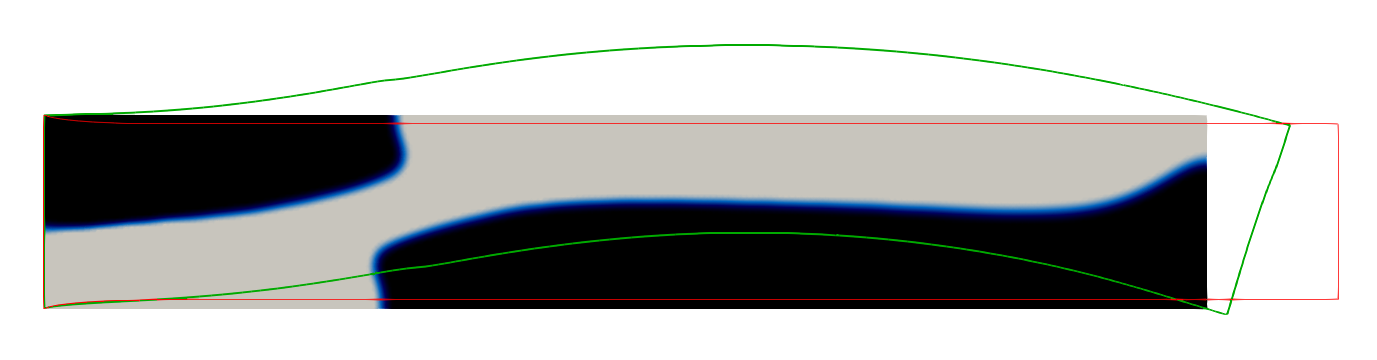}
\includegraphics[angle=-0,width=0.3\textwidth]{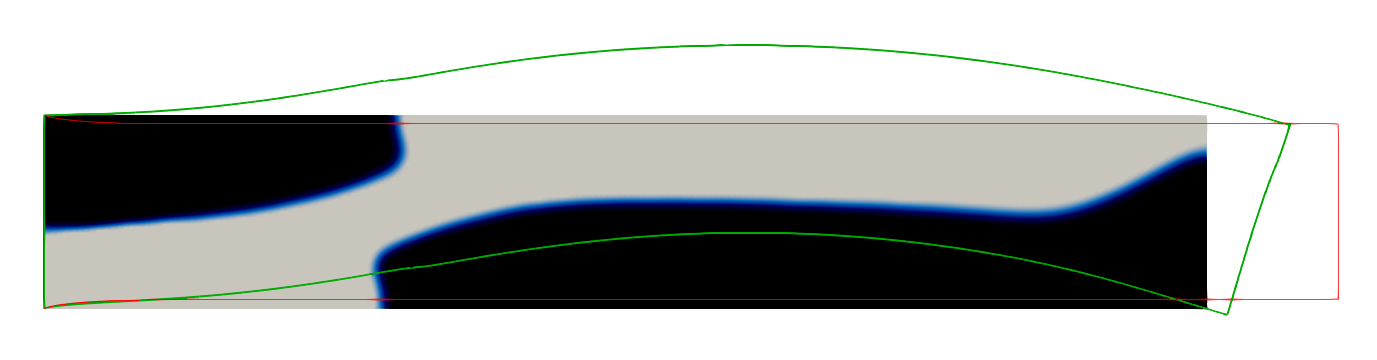}
} \\[1ex]
\includegraphics[angle=-0,width=0.3\textwidth]{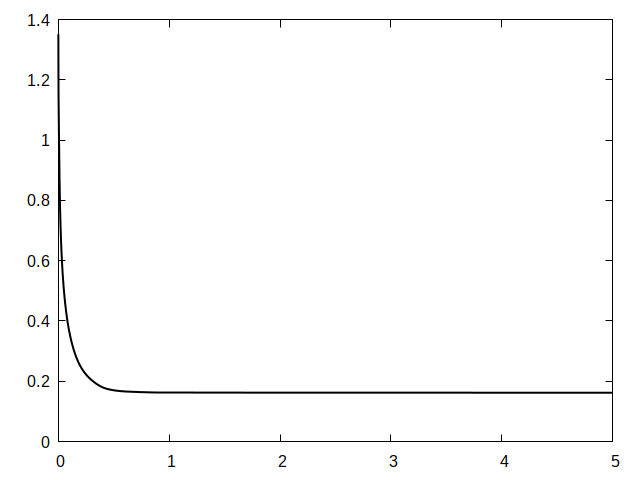}
\includegraphics[angle=-0,width=0.3\textwidth]{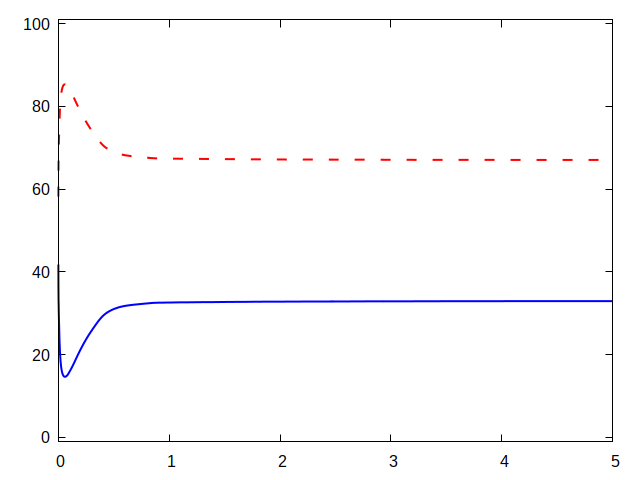}
\includegraphics[angle=-0,width=0.3\textwidth]{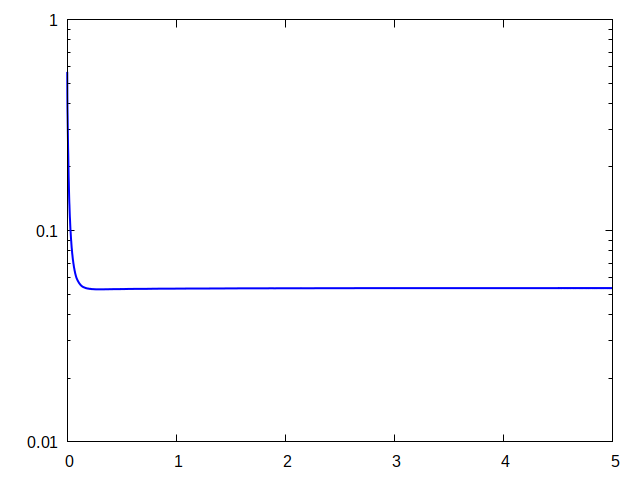}
\caption{%
Computation on $\overline\Omega = [0,6] \times [-\tfrac12, \tfrac12]$
for the target shape \eqref{eq:utarget2} with $c^{\rm tar} = 0.25$, $k^{\rm tar} = 2$ 
and $W = \bm{e}_2 \otimes \bm{e}_2$ and $\Gamma^{\rm tar} = \partial\Omega$, 
with $\eps=\frac1{8\pi}$ and $\gamma=0.01$. 
We display $\vp_h^n$ and the 
displacements $\bu_h^n$ (red) and $\hu_h^n$ (green) at pseudo-times 
$t=1,\,2,\,5$.
Below we show plots of the cost functional $J^h(\vp_h^n, \hu_h^n)$,
the proportion in it of the elastic energy $\mathcal{E}^{\rm h,tar}(\hu_h^n)$
(solid blue) and the interfacial energy $\gamma \mathcal{E}^h(\varphi_h^n)$ 
(dashed red), as well as of $\log_{10} \mathcal{E}^{\rm h,tar}(\hu_h^n)$.
}
\label{fig:2d8pi_cos61_g001_187}
\end{figure}%

Similarly, we observe in Figure~\ref{fig:2d8pi_121kick_g001_Wid} that three
strategically placed small amounts of active material guarantee a
cosine profile at the programmed stage for the printed active composite.
\begin{figure}
\center
\includegraphics[angle=-0,width=0.3\textwidth]{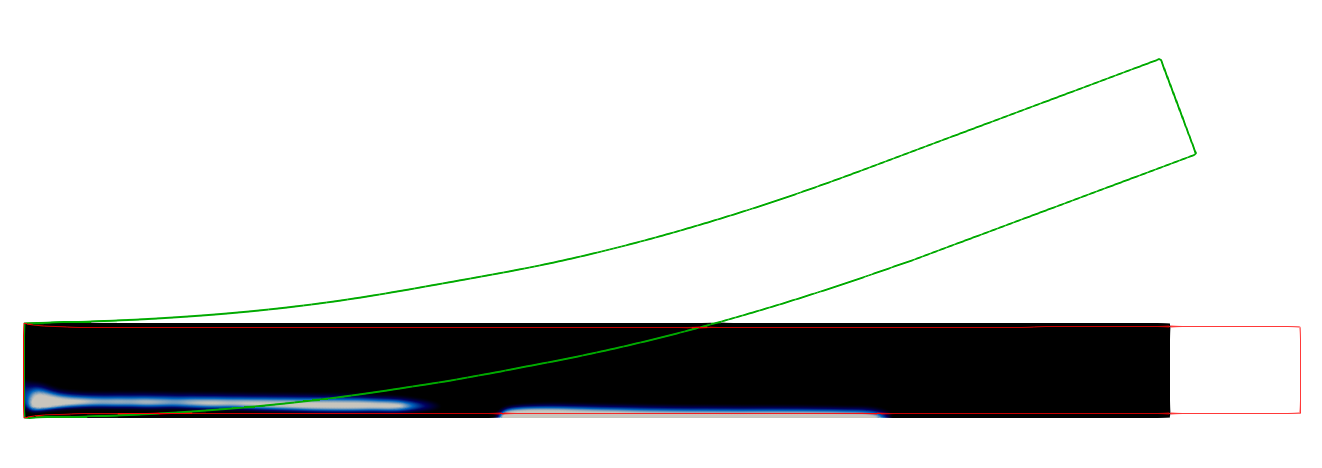}
\includegraphics[angle=-0,width=0.3\textwidth]{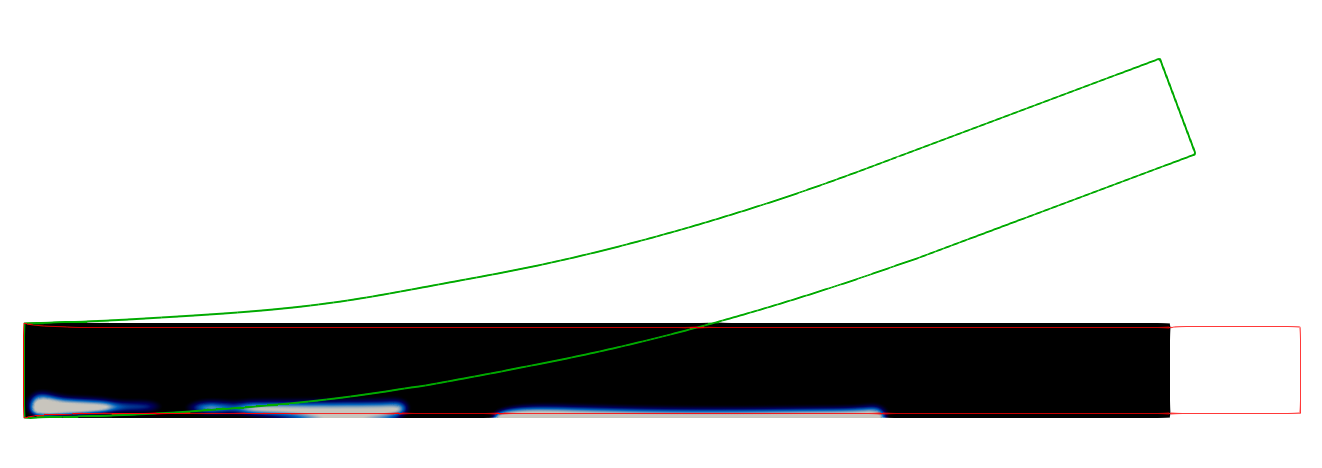}
\includegraphics[angle=-0,width=0.3\textwidth]{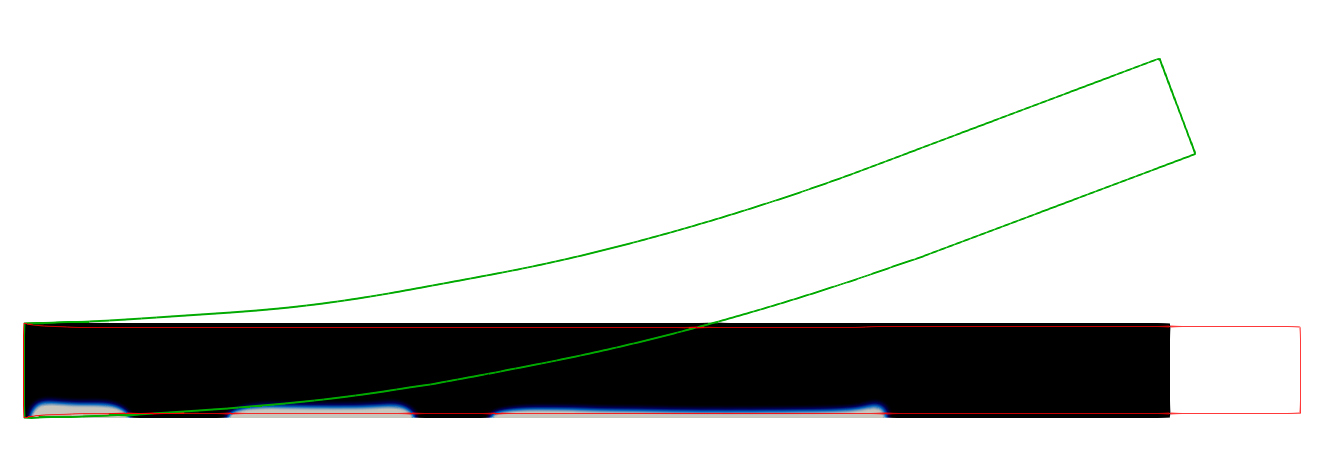} \\[1ex]
\includegraphics[angle=-0,width=0.3\textwidth]{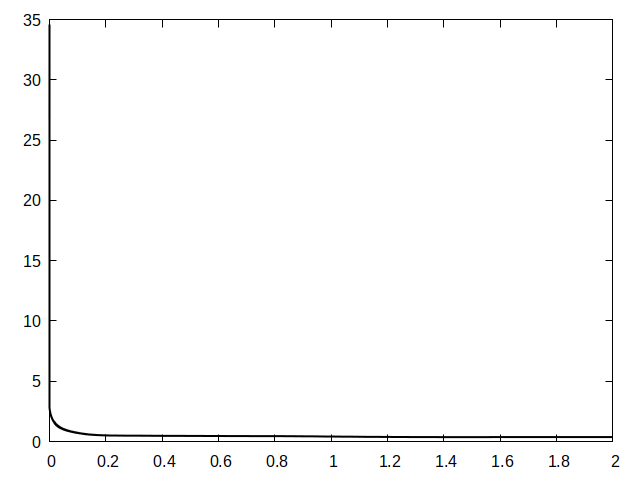}
\includegraphics[angle=-0,width=0.3\textwidth]{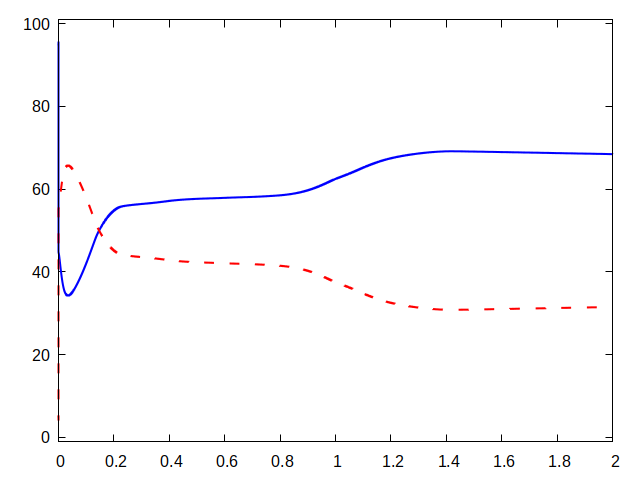}
\includegraphics[angle=-0,width=0.3\textwidth]{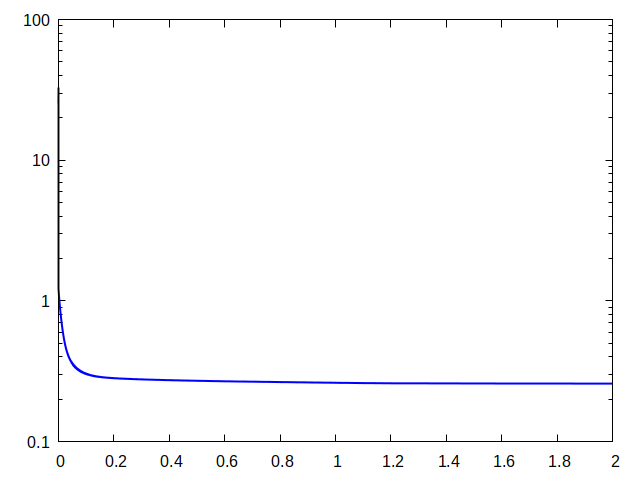}
\caption{%
Computation on $\overline\Omega = [0,12] \times [-\tfrac12, \tfrac12]$
for the target shape \eqref{eq:utarget} with $c^{\rm tar} = 0.02$ and 
$W = \Id$ and $\Gamma^{\rm tar} = \partial\Omega$, 
with $\eps=\frac1{8\pi}$ and $\gamma=0.01$. 
We display $\vp_h^n$ and the 
displacements $\bu_h^n$ (red) and $\hu_h^n$ (green) at pseudo-times 
$t=0.5,\,1,\,2$.
Below we show plots of the cost functional $J^h(\vp_h^n, \hu_h^n)$,
the proportion in it of the elastic energy $\mathcal{E}^{\rm h,tar}(\hu_h^n)$
(solid blue) and the interfacial energy $\gamma \mathcal{E}^h(\varphi_h^n)$ 
(dashed red), as well as of $\log_{10} \mathcal{E}^{\rm h,tar}(\hu_h^n)$.
}
\label{fig:2d8pi_121kick_g001_Wid}
\end{figure}

\begin{figure}
\center
\includegraphics[angle=-0,width=0.3\textwidth]{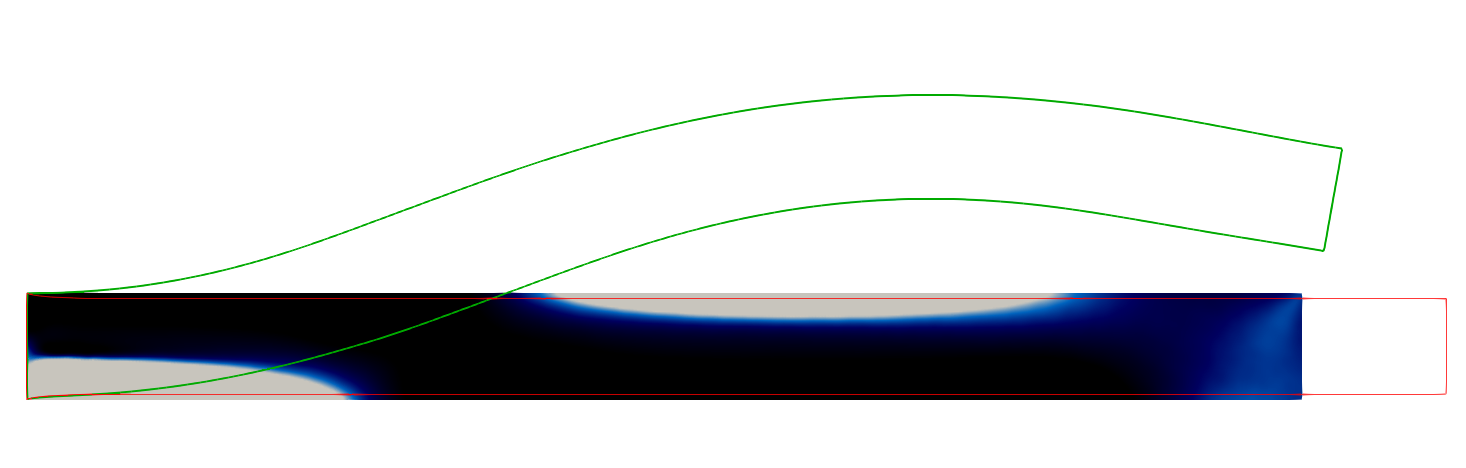}
\includegraphics[angle=-0,width=0.3\textwidth]{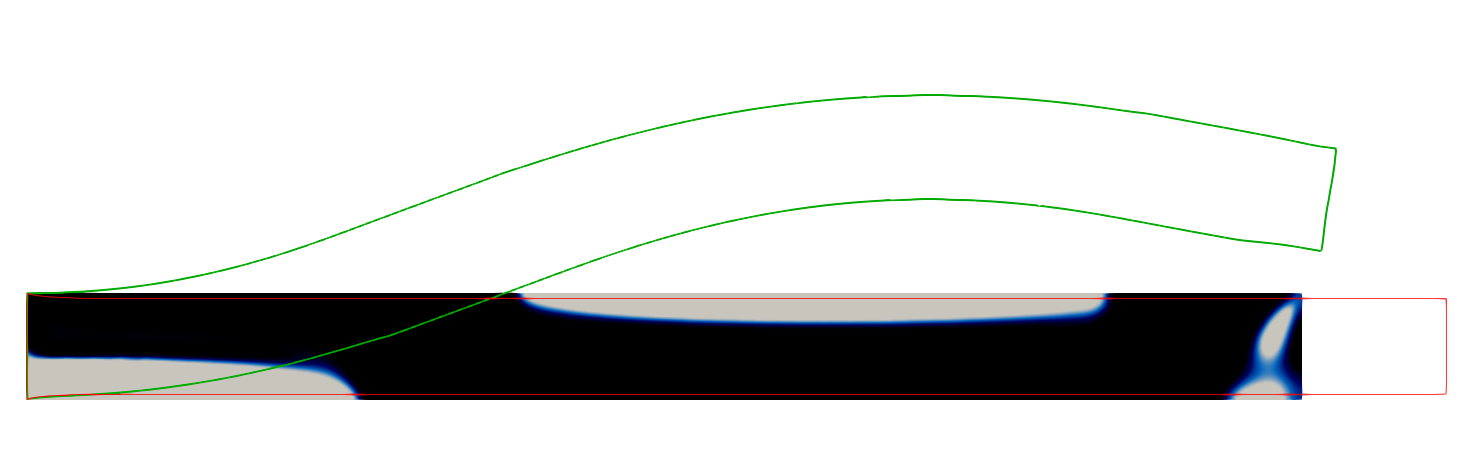}
\includegraphics[angle=-0,width=0.3\textwidth]{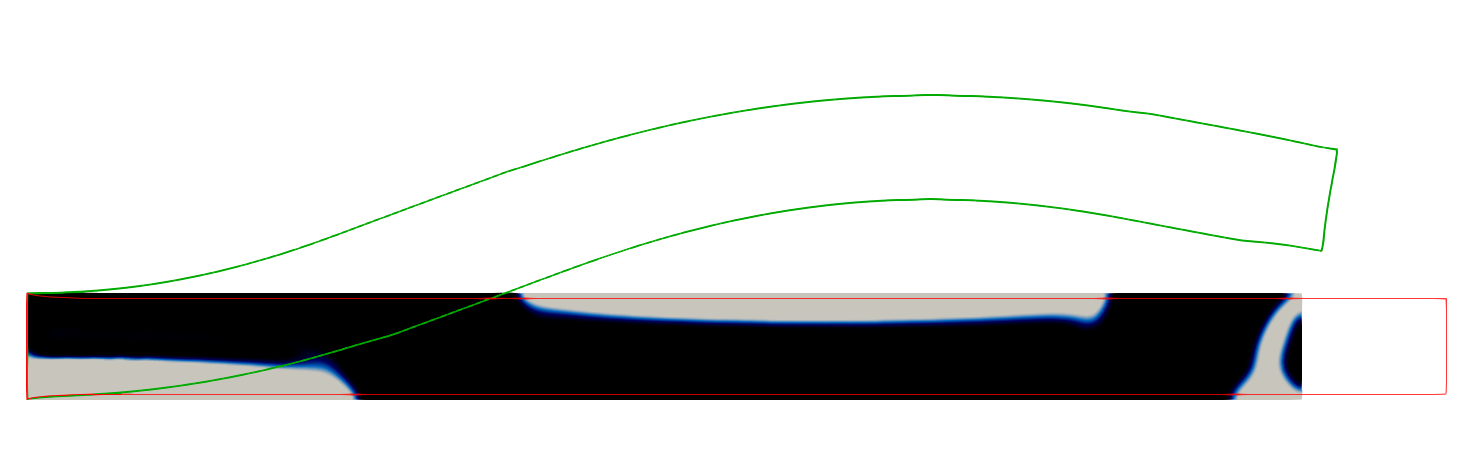} \\[1ex]
\includegraphics[angle=-0,width=0.3\textwidth]{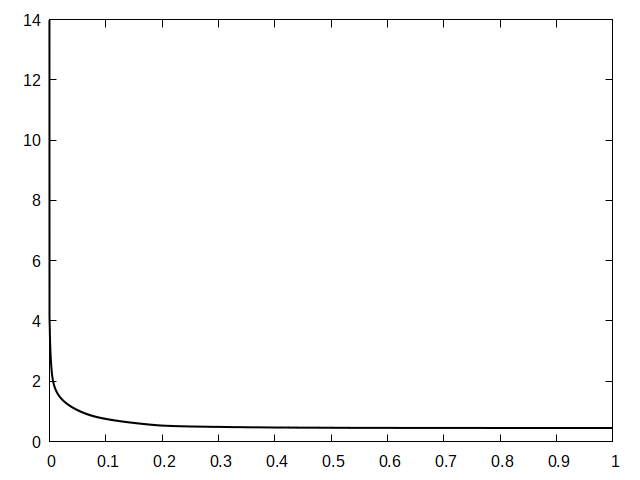}
\includegraphics[angle=-0,width=0.3\textwidth]{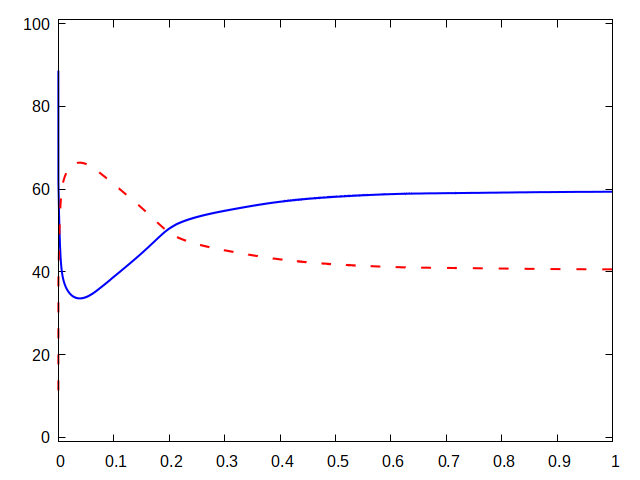}
\includegraphics[angle=-0,width=0.3\textwidth]{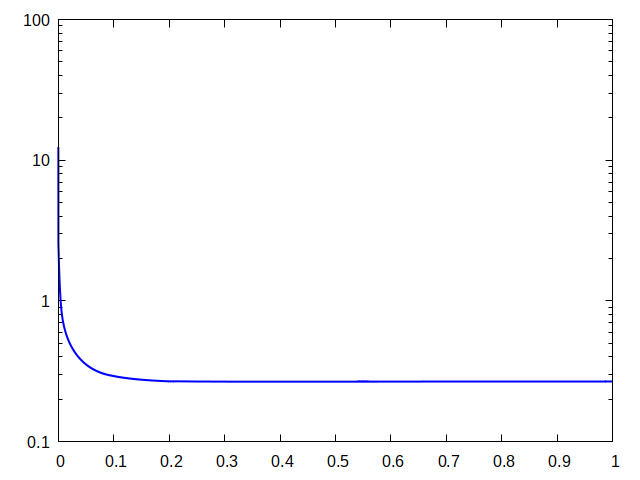}
\caption{
Computation on $\overline\Omega = [0,12] \times [-\tfrac12, \tfrac12]$
for the target shape \eqref{eq:utarget2} with $c^{\rm tar} = 1$, $k^{\rm tar} = 1.5$ 
and $W = \Id$ and $\Gamma^{\rm tar} = \partial_{\rm top}\Omega$, 
with $\eps=\frac1{8\pi}$ and $\gamma=0.01$. 
We display $\vp_h^n$ and the 
displacements $\bu_h^n$ (red) and $\hu_h^n$ (green) at pseudo-times 
$t=0.1,\,0.5,\,1$.
Below we show plots of the cost functional $J^h(\vp_h^n, \hu_h^n)$,
the proportion in it of the elastic energy $\mathcal{E}^{\rm h,tar}(\hu_h^n)$
(solid blue) and the interfacial energy $\gamma \mathcal{E}^h(\varphi_h^n)$ 
(dashed red), as well as of $\log_{10} \mathcal{E}^{\rm h,tar}(\hu_h^n)$.
}
\label{fig:2d8pi_cos121kickp15t_g001_Wid}
\end{figure}%

\subsection{Numerical simulations in three dimensions}
For the target shapes we consider a function of the form
\[
\bm{u}^{\rm tar}(x_1,x_2,x_3) = u^{\rm tar}(x_1,x_2) {\bm e}_3
\]
and choose $u^{\rm tar}$ from one of the following: 
\begin{equation} \label{eq:3dutarget}
u^{\rm tar}(x_1,x_2) = c^{\rm tar} (x_1)^2.
\end{equation}
\begin{equation} \label{eq:3dutarget2}
u^{\rm tar}(x_1,x_2) = c^{\rm tar} \Big (1- \cos \frac{k^{\rm tar}\pi x_1}{L_1} \Big ) .
\end{equation}
\begin{equation} \label{eq:3dutarget1}
u^{\rm tar}(x_1,x_2) = c^{\rm tar} x_1  x_2 .
\end{equation}
Notice that \eqref{eq:3dutarget} and \eqref{eq:3dutarget2} are simply the three dimensional analogues of the parabolic profile \eqref{eq:utarget} and the cosine profile \eqref{eq:utarget2}, respectively.  On the other hand, \eqref{eq:3dutarget1} yields a linear profile in $x_1$ with a twisting in the $x_2$-direction. 

In each of the Figures~\ref{fig:3d4pi_61kicktop_g001_Wid}, \ref{fig:3d4pi_cos1231f1top_g01}, and \ref{fig:3d4pi_cos121kick_g001t_Wid}, \ref{fig:3d2pi_linear661_g002twist} we provide visualisations of the numerical solution $\vp_h^n$ at various pseudo-times (black denotes the passive material $\{\vp_h^n = -1\}$ and grey denotes the active material $\{\vp_h^n = 1\}$), and the corresponding displacements $\hu_h^n$ (darker colours indicating lower values of $\hu_h^n \cdot \bm{e}_3$ and lighter colours indicating higher values of $\hu_h^n \cdot \bm{e}_3$).  We also provide pseudo-time plots of the energy functionals, similarly to the 2D simulations in the previous subsection. The parameter details are summarised in the Table~\ref{tbl:2}.

\begin{table}[h]
\centering
\begin{tabular}{c|c|c|c|c|c|c}
Figure & Profile & Domain & $W$ & $\Gamma^{\rm tar}$ & $c^{\rm tar}$ & $ k^{\rm tar}$ \\
\hline
\ref{fig:3d4pi_61kicktop_g001_Wid} &\eqref{eq:3dutarget} & $[0,6] \times [-\frac{3}{2}, \frac{3}{2}] \times [0,1]$ & $\Id$ & $\pd_{\rm top} \Omega$ & $0.075$ & - \\[1ex]
\ref{fig:3d4pi_cos1231f1top_g01} & \eqref{eq:3dutarget2}  & $[0,12] \times [-\frac{3}{2}, \frac{3}{2}] \times [0,1]$ & $\bm{e}_3 \otimes \bm{e}_3$ & $\pd_{\rm top} \Omega$ & 1 & 2 \\[1ex]
\ref{fig:3d4pi_cos121kick_g001t_Wid} & \eqref{eq:3dutarget2} & $[0,12] \times [-\frac{3}{2}, \frac{3}{2}] \times [0,1]$ & $\Id$ & $\pd_{\rm top} \Omega$ & $1$ & $2$  \\[1ex]
\ref{fig:3d2pi_linear661_g002twist} &  \eqref{eq:3dutarget1} & $[0,6] \times [-3,3] \times [0,1]$ & $\bm{e}_3 \otimes \bm{e}_3$ & $\pd_{\rm right} \Omega$ & $0.1$ & - 
\end{tabular}
\caption{Parameter details for numerical simulations in 3D.}
\label{tbl:2}
\end{table}

In the first three figures we take $\eps = \frac{1}{4\pi}$ and for $\gamma$ choose either $0.1$ or $0.01$. In each simulation the cost functional decreases monotonically, but the proportions of the two energies (elastic vs interfacial) differ from case to case.

The first simulation is a direct 3D analogue for the computation previously
shown in Figure~\ref{fig:2d8pi_61kick_g001_Wid}, the only difference being that
here we restrict the set $\Gamma^{\rm tar}$ to the upper part of the boundary
$\partial\Omega$. As expected, the observed results are very close to the ones
seen previously in the 2D setting. In particular, the active phase occupies
the lower half of the domain, with a dip towards the right end of the domain.
\begin{figure}
\center
\includegraphics[angle=-0,width=0.3\textwidth]{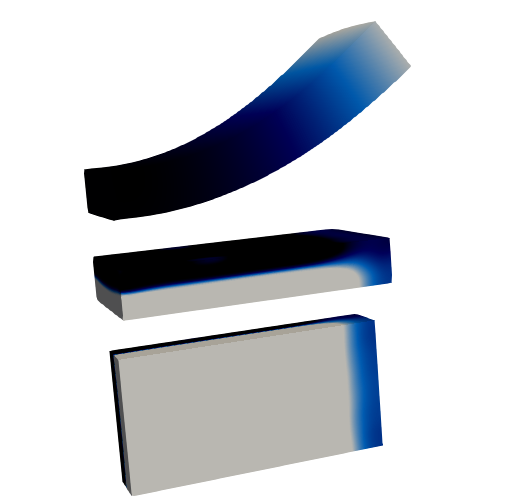}
\includegraphics[angle=-0,width=0.3\textwidth]{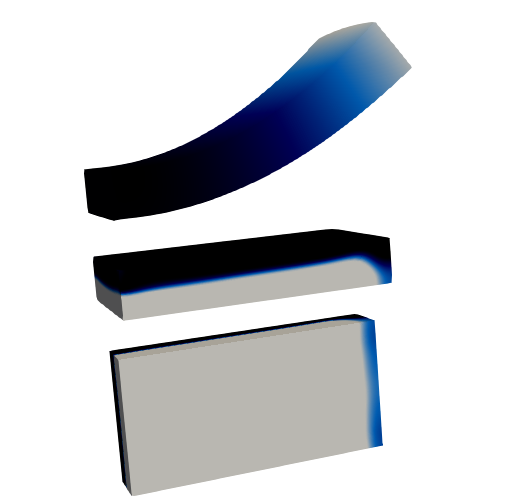}
\includegraphics[angle=-0,width=0.3\textwidth]{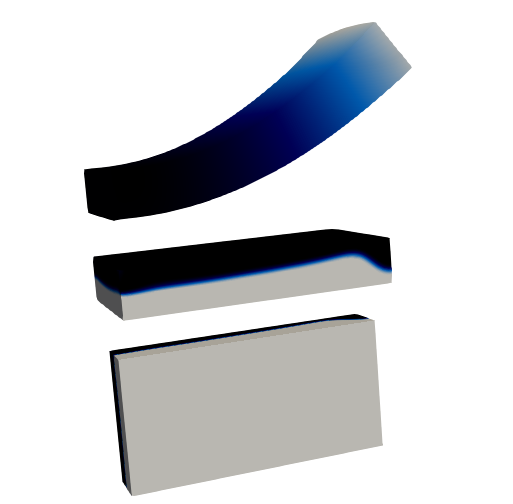} \\[1ex]
\includegraphics[angle=-0,width=0.3\textwidth]{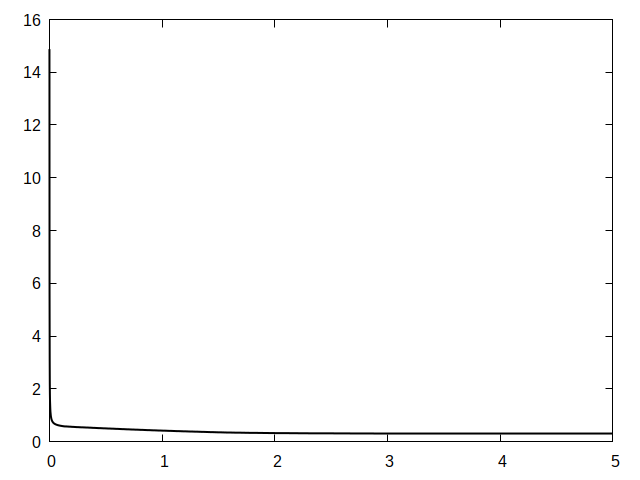}
\includegraphics[angle=-0,width=0.3\textwidth]{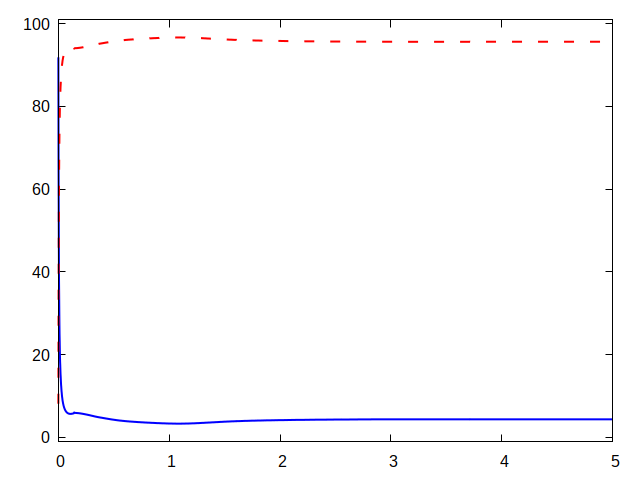}
\includegraphics[angle=-0,width=0.3\textwidth]{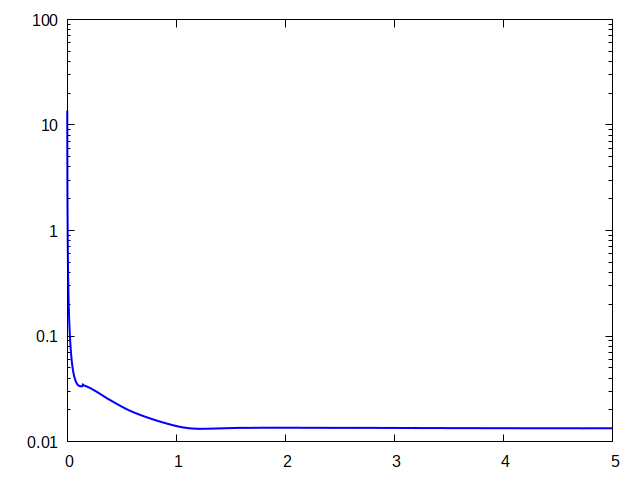}
\caption{
Computation on $\overline\Omega = [0,6] \times [-\tfrac32, \tfrac32] 
\times [0,1]$
for the target shape \eqref{eq:3dutarget} with $c^{\rm tar} = 0.075$ and 
$W = \Id$ and $\Gamma^{\rm tar} = \partial_{\rm top}\Omega$, 
with $\eps=\frac1{4\pi}$ and $\gamma=0.01$.
We display $\vp_h^n$ (side view and bottom view) 
and the displacement $\hu_h^n$ (with colour coding
for $\hu_h^n \cdot \bm{e}_3$) at pseudo-times 
$t=1,\,2,\,5$.
Below we show plots of the cost functional $J^h(\vp_h^n, \hu_h^n)$,
the proportion in it of the elastic energy $\mathcal{E}^{\rm h,tar}(\hu_h^n)$
(solid blue) and the interfacial energy $\gamma \mathcal{E}^h(\varphi_h^n)$ 
(dashed red), as well as of $\log_{10} \mathcal{E}^{\rm h,tar}(\hu_h^n)$.
}
\label{fig:3d4pi_61kicktop_g001_Wid}
\end{figure}%

On more elongated domains we once again observe that relatively little active
material can result in large deformations at the programmed stage. For example, 
the strategic placement of the active component seen in 
Figure~\ref{fig:3d4pi_cos1231f1top_g01} yields a large cosine profile
deformation.
\begin{figure}
\center
\includegraphics[angle=-0,width=0.3\textwidth]{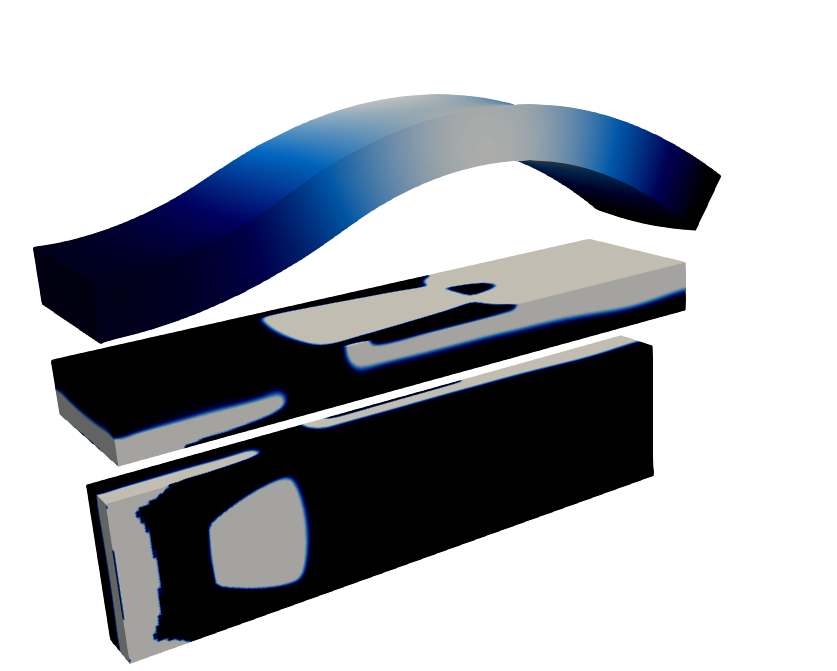}
\includegraphics[angle=-0,width=0.3\textwidth]{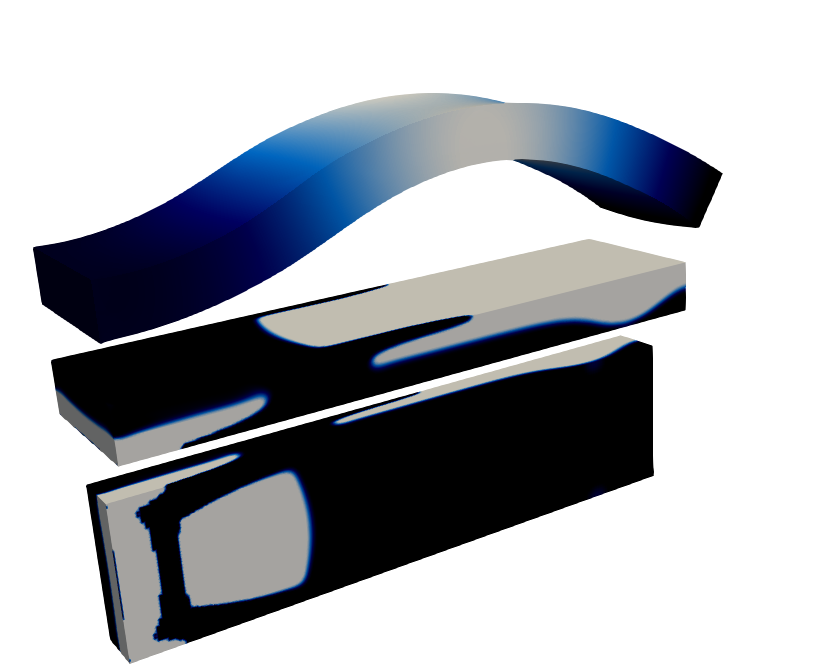}
\includegraphics[angle=-0,width=0.3\textwidth]{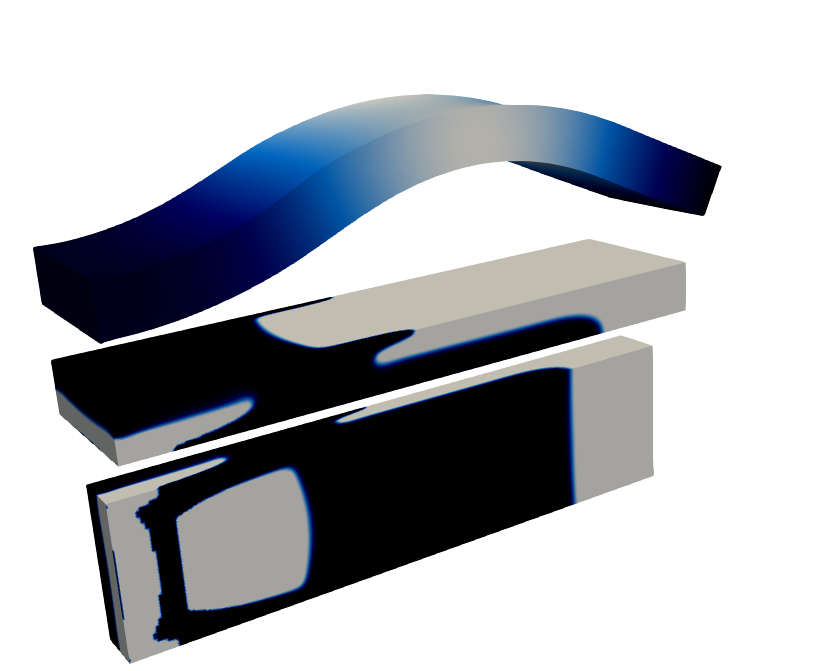} \\[1ex]
\includegraphics[angle=-0,width=0.3\textwidth]{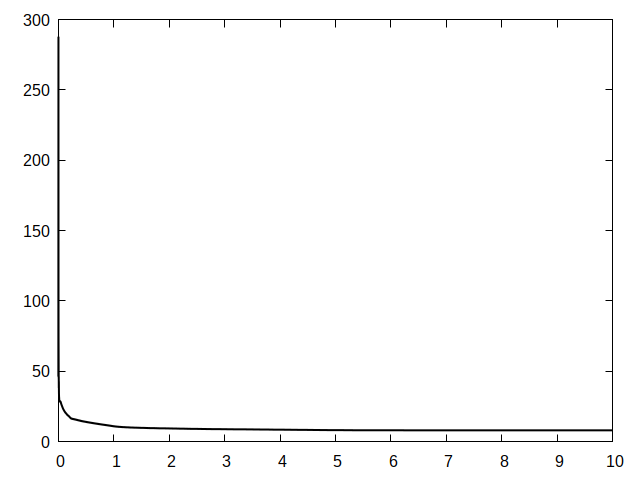}
\includegraphics[angle=-0,width=0.3\textwidth]{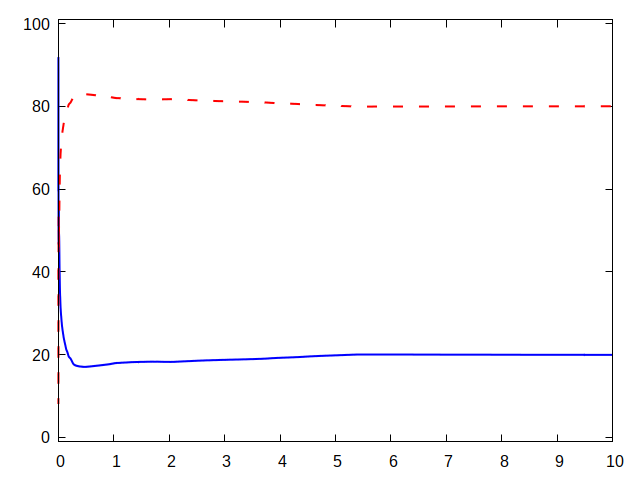}
\includegraphics[angle=-0,width=0.3\textwidth]{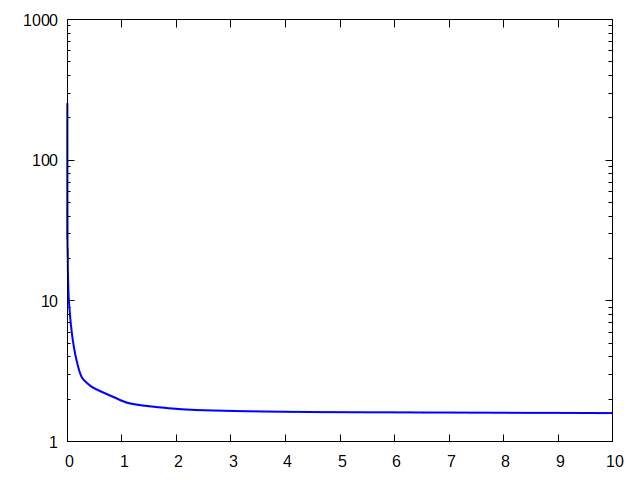}
\caption{%
Computation on 
$\overline\Omega = [0,12] \times [-\tfrac32, \tfrac32] \times [0,1]$
for the target shape \eqref{eq:3dutarget2} with $c^{\rm tar} = 1$,
$k^{\rm tar} = 2$ 
and $W = \bm{e}_3 \otimes \bm{e}_3$ and $\Gamma^{\rm tar} = \partial_{\rm top}\Omega$, 
with $\eps=\frac1{4\pi}$ and $\gamma=0.1$. 
We display $\vp_h^n$ (side view and bottom view) 
and the displacement $\hu_h^n$ (with colour coding
for $\hu_h^n \cdot \bm{e}_3$) at pseudo-times $t=1,\,2,\,10$.
Below we show plots of the cost functional $J^h(\vp_h^n, \hu_h^n)$,
the proportion in it of the elastic energy $\mathcal{E}^{\rm h,tar}(\hu_h^n)$
(solid blue) and the interfacial energy $\gamma \mathcal{E}^h(\varphi_h^n)$ 
(dashed red), as well as of $\log_{10} \mathcal{E}^{\rm h,tar}(\hu_h^n)$.
}
\label{fig:3d4pi_cos1231f1top_g01}
\end{figure}%
It is interesting to note that by simply changing the weighting matrix $W$ in
the target energy functional, we obtain a completely different optimal
distribution of active material. This can be seen in 
Figure~\ref{fig:3d4pi_cos121kick_g001t_Wid}, where the only change to the
previous simulation is $W = \Id$, rather than $W = \bm{e}_3 \otimes \bm{e}_3$.
Now there are just two connected components for the active phase, one at the
lower left part of the domain, and one at the middle of the top of the domain.
Yet the obtained deformations at the programmed stage are very similar.
\begin{figure}
\center
\includegraphics[angle=-0,width=0.3\textwidth]{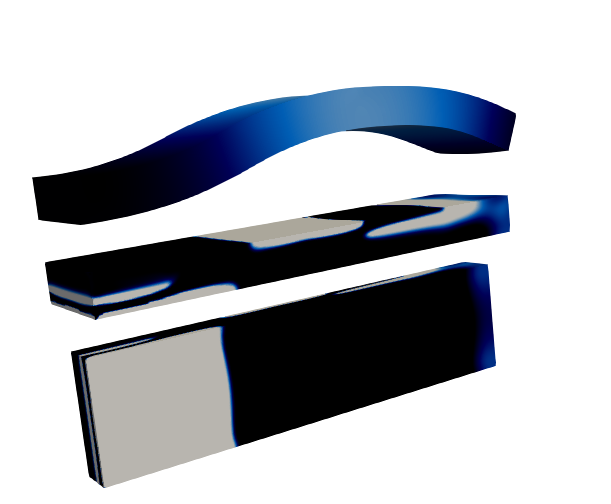}
\includegraphics[angle=-0,width=0.3\textwidth]{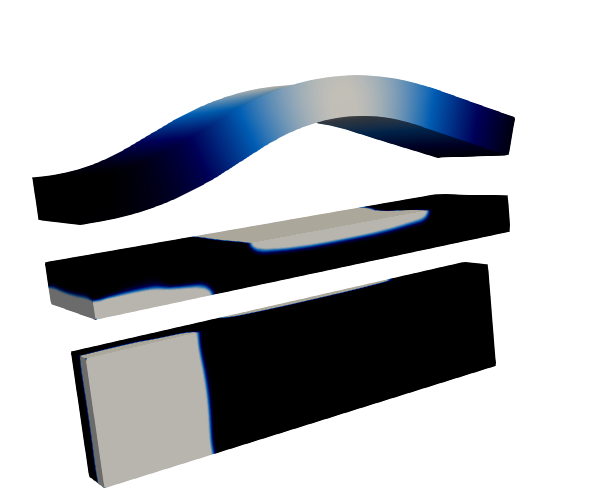}
\includegraphics[angle=-0,width=0.3\textwidth]{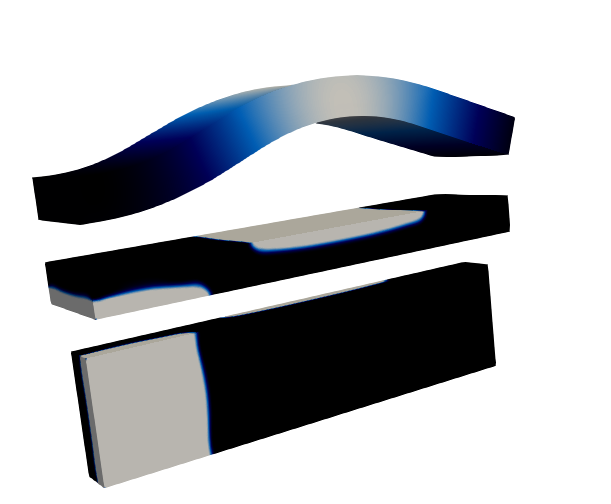} \\[1ex]
\includegraphics[angle=-0,width=0.3\textwidth]{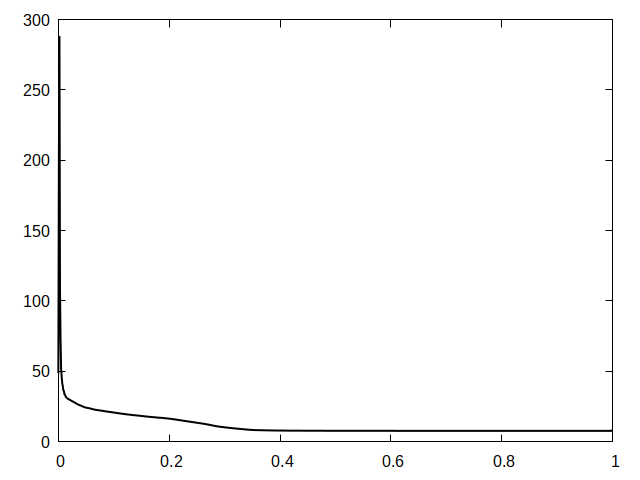}
\includegraphics[angle=-0,width=0.3\textwidth]{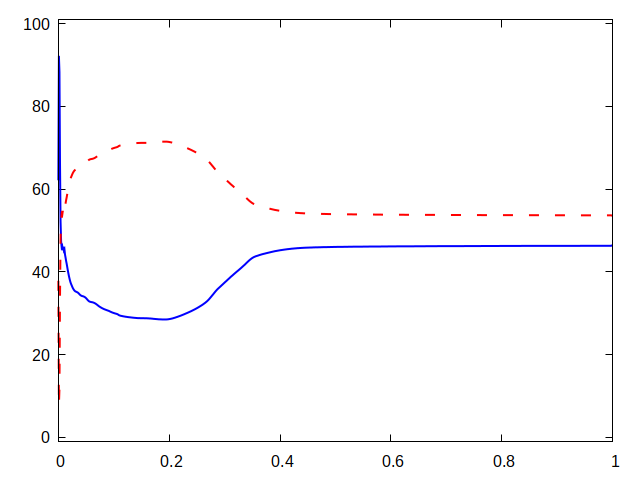}
\includegraphics[angle=-0,width=0.3\textwidth]{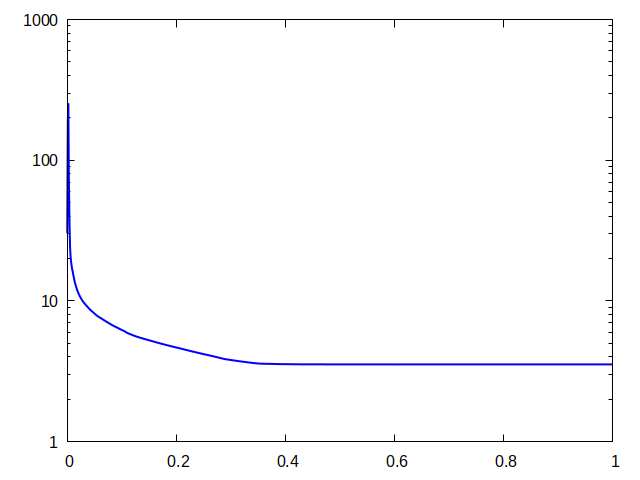}
\caption{
Computation on $\overline\Omega = [0,12] \times [-\tfrac32, \tfrac32] 
\times [0,1]$
for the target shape \eqref{eq:3dutarget2} with $c^{\rm tar} = 1$, $k^{\rm tar}=2$ 
and $W = \Id$ and $\Gamma^{\rm tar} = \partial_{\rm top}\Omega$, 
with $\eps=\frac1{4\pi}$ and $\gamma=0.01$.
We display $\vp_h^n$ (side view and bottom view) 
and the displacement $\hu_h^n$ (with colour coding
for $\hu_h^n \cdot \bm{e}_3$) at pseudo-times 
$t=0.1,\,0.5,\,1$.
Below we show plots of the cost functional $J^h(\vp_h^n, \hu_h^n)$,
the proportion in it of the elastic energy $\mathcal{E}^{\rm h,tar}(\hu_h^n)$
(solid blue) and the interfacial energy $\gamma \mathcal{E}^h(\varphi_h^n)$ 
(dashed red), as well as of $\log_{10} \mathcal{E}^{\rm h,tar}(\hu_h^n)$.
}
\label{fig:3d4pi_cos121kick_g001t_Wid}
\end{figure}%

Our final numerical simulation is for a twisted target shape. In particular,
we use the target function \eqref{eq:3dutarget1} with $c^{\rm tar} = 0.1$ 
on the domain $\overline\Omega = [0,6] \times [-3, 3] \times [0,1]$, with
$W = \bm{e}_3 \otimes \bm{e}_3$ and 
$\Gamma^{\rm tar} = \partial_{\rm right}\Omega$.
In Figure~\ref{fig:3d2pi_linear661_g002twist}
we provide visualisations of the numerical solution $\vp_h^n$ at various pseudo-times, and the corresponding displacements $\hu_h^n$ (here darker colours indicate lower values of $|\hu_h^n|$ and lighter colours indicate higher values of $|\hu_h^n|$). For this computation we take $\eps=\frac{1}{2\pi}$ and $\gamma=0.02$.
At the optimal configuration we observe an elaborate distribution of the active
material, which yields a twisted shape of the component at the programmed
stage.
\begin{figure}
\center
\includegraphics[angle=-0,width=0.3\textwidth]{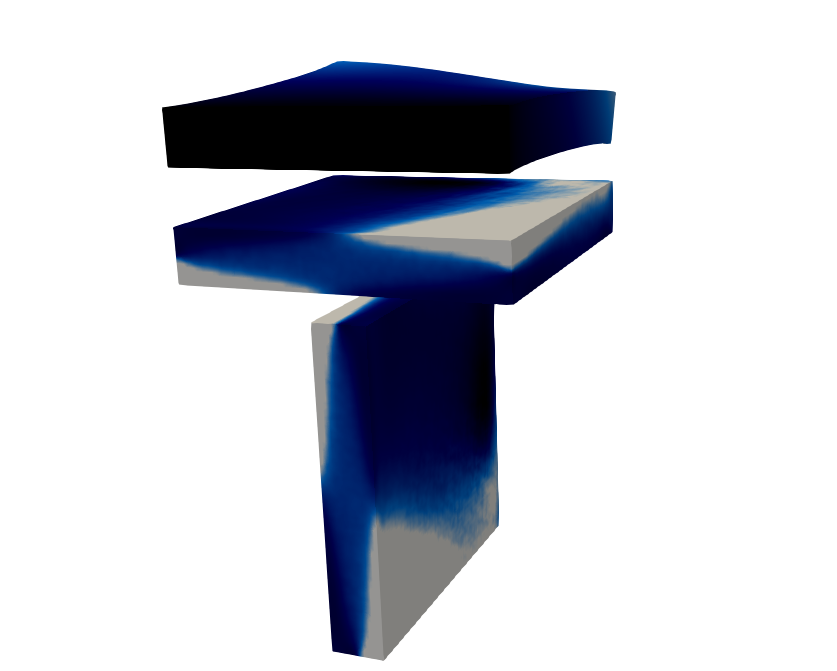}
\includegraphics[angle=-0,width=0.3\textwidth]{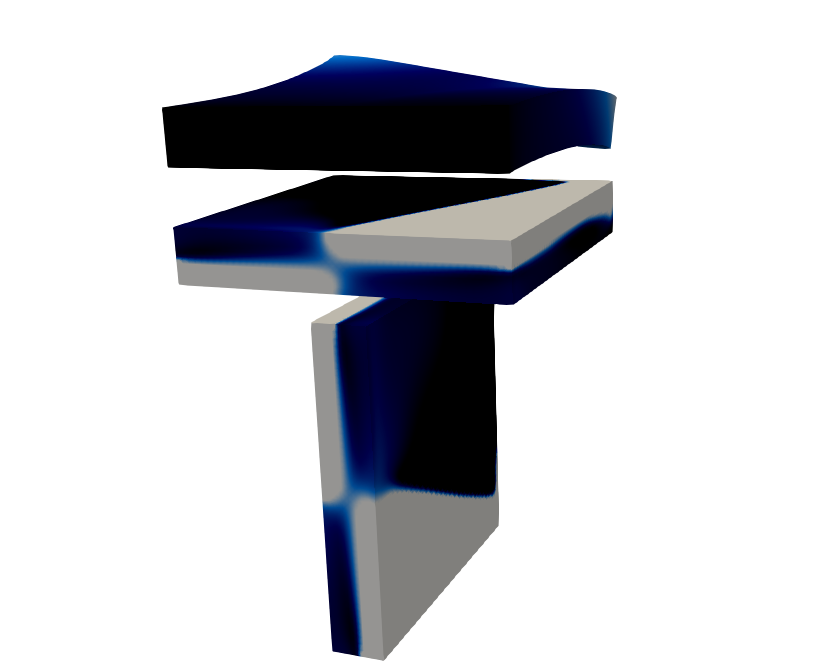}
\includegraphics[angle=-0,width=0.3\textwidth]{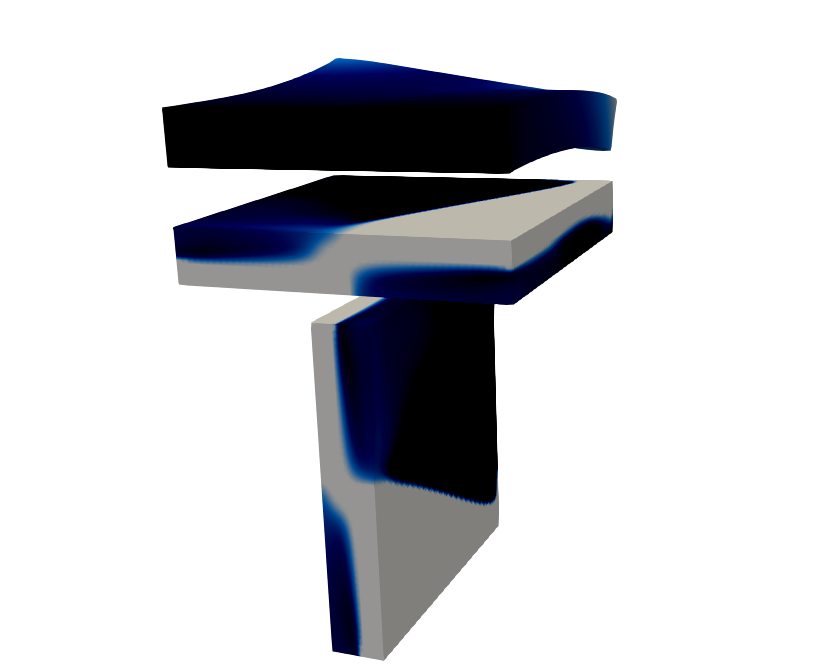}
\includegraphics[angle=-0,width=0.3\textwidth]{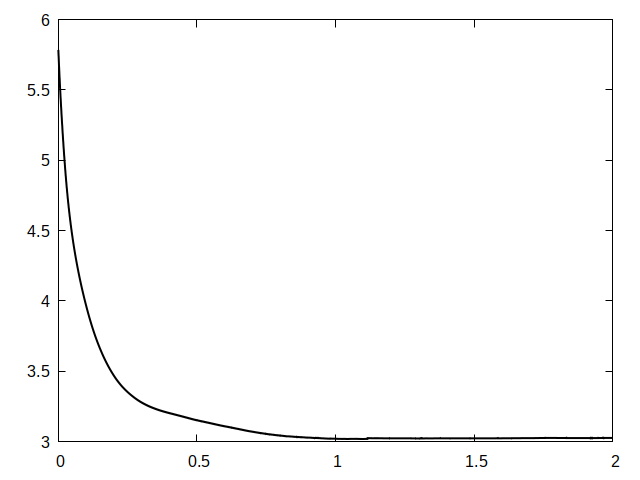}
\includegraphics[angle=-0,width=0.3\textwidth]{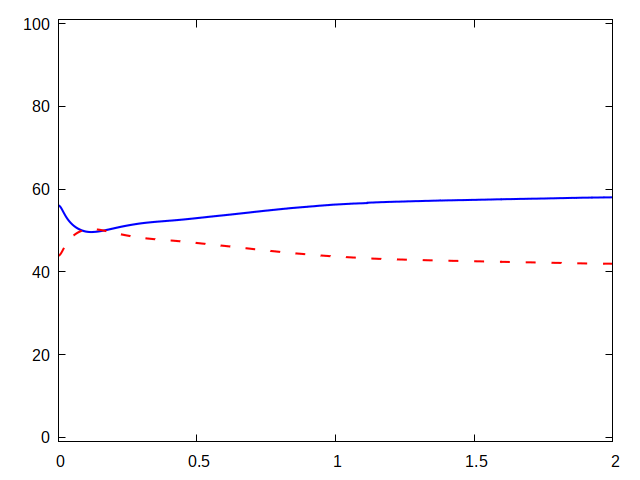}
\includegraphics[angle=-0,width=0.3\textwidth]{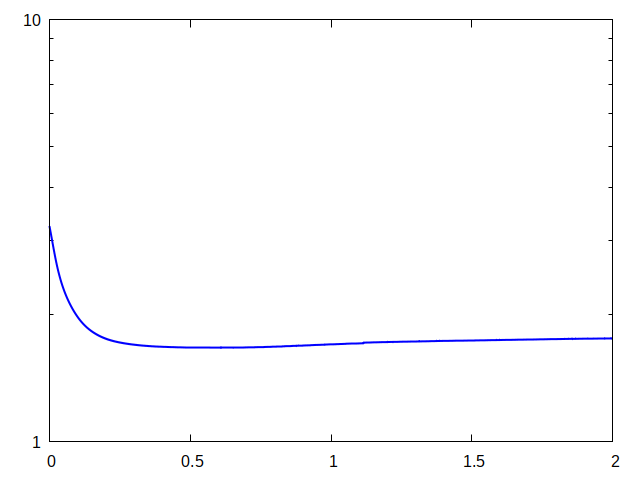}
\includegraphics[angle=-0,width=0.3\textwidth]{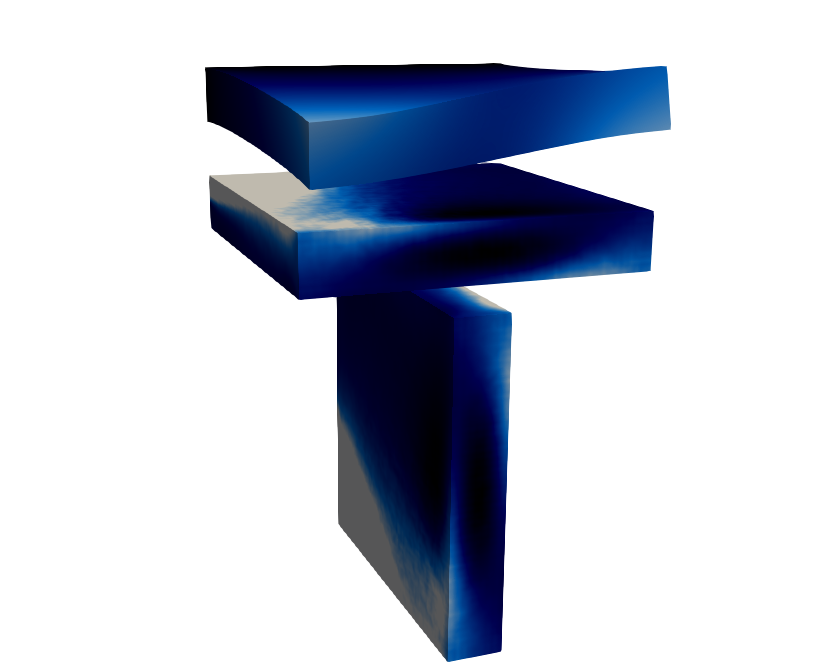}
\includegraphics[angle=-0,width=0.3\textwidth]{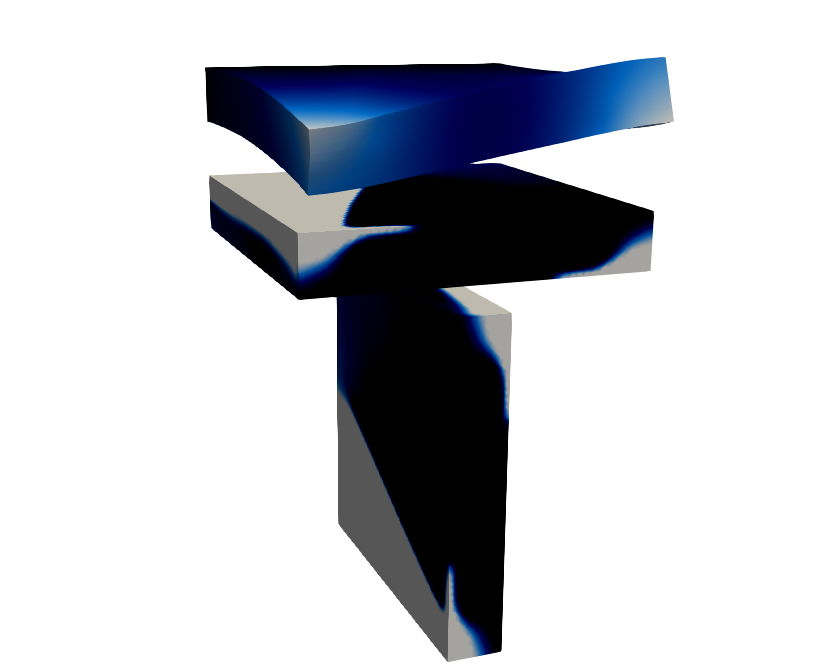}
\includegraphics[angle=-0,width=0.3\textwidth]{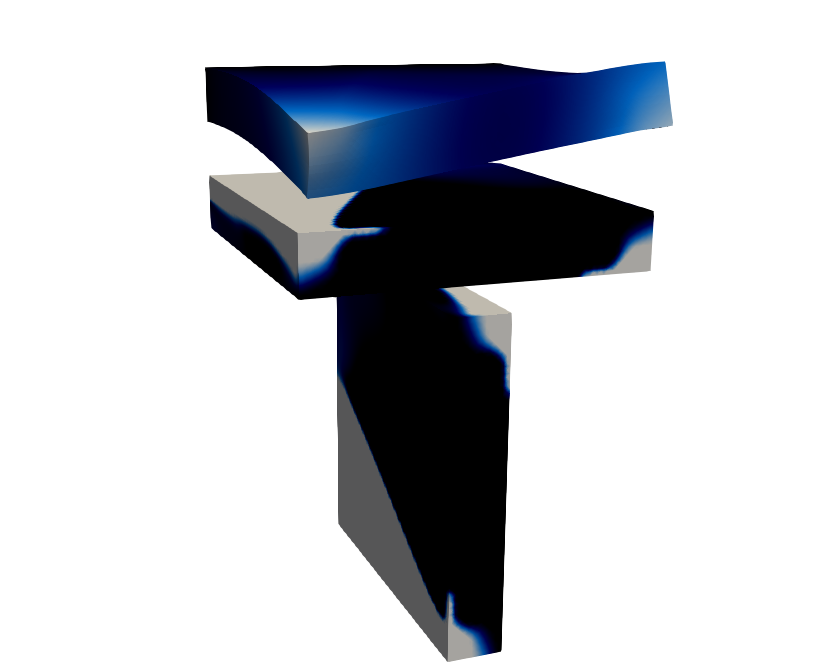}
\caption{%
Computation on 
$\overline\Omega = [0,6] \times [-3, 3] \times [0,1]$
for the target shape \eqref{eq:3dutarget1}(b) with $c^{\rm tar} = 0.1$ 
and $W = \bm{e}_3 \otimes \bm{e}_3$ and 
$\Gamma^{\rm tar} = \partial_{\rm right}\Omega$,
with $\eps=\frac1{2\pi}$ and $\gamma=0.02$. 
We display $\vp_h^n$ and the displacement $\hu_h^n$ (with colour coding
for $|\hu_h^n|$) at pseudo-times $t=0.1,\ 1,\,2$.
In the middle  
we show plots of the cost functional $J^h(\vp_h^n, \hu_h^n)$,
the proportion in it of the elastic energy $\mathcal{E}^{\rm h,tar}(\hu_h^n)$
(solid blue) and the interfacial energy $\gamma \mathcal{E}^h(\varphi_h^n)$ 
(dashed red), as well as of $\log_{10} \mathcal{E}^{\rm h,tar}(\hu_h^n)$.
In the bottom we show a backward view of the first row of plots.
}
\label{fig:3d2pi_linear661_g002twist}
\end{figure}%

\section*{Acknowledgments}
\noindent The authors HG and AS gratefully acknowledge the support by the Graduiertenkolleg 2339 IntComSin of the Deutsche Forschungsgemeinschaft (DFG, German Research Foundation) -- Project-ID 321821685. The work of KFL is supported by the Research Grants Council of the Hong Kong Special Administrative Region, China [Project No.: HKBU 14302218 and HKBU 12300321].

\footnotesize
\bibliographystyle{plain}

\end{document}